%
%Version 12b du 30/04/2019
% = Version 11e + Remarques Referee +def
\documentclass[11pt,oneside,english]{amsart}
\usepackage{soul} % para tachar palabras

\usepackage[T1]{fontenc}
\usepackage[a4paper]{geometry}
\usepackage{color}
\usepackage{amstext}
\usepackage{amsthm}
\usepackage{amssymb}
\usepackage{varioref}
\usepackage{hyperref}
\usepackage{cleveref}
\usepackage{amssymb}
\usepackage{mathrsfs}
\usepackage{enumerate}
\usepackage[all]{xy}
\hypersetup{
    colorlinks,
    citecolor=blue,
    filecolor=black,
    linkcolor=blue,
    urlcolor=black
}
%%%%%%%%%%%%%%%%%%%%%%%%%%%
%
%  Start of metadata  
%
%%%%%%%%%%%%%%%%%%%%%%%%%%%
\title{Blown-up  intersection cochains and Deligne's sheaves}

\date{\today}

\author{David Chataur}
\address{Lamfa\\
Universit\'e de Picardie Jules Verne\\
33, rue Saint-Leu\\
80039 Amiens Cedex~1\\
         France}
\email{David.Chataur@u-picardie.fr}

\author{Martintxo Saralegi-Aranguren}
\address{Laboratoire de Math{\'e}matiques de Lens\\  
      EA 2462 \\
      Universit\'e d'Artois\\
         SP18, rue Jean Souvraz\\
          62307 Lens Cedex\\
         France}
\email{martin.saraleguiaranguren@univ-artois.fr}

\author{Daniel Tanr\'e}
\address{D\'epartement de Math{\'e}matiques\\
         UMR 8524 \\
         Universit\'e de Lille\\
         59655 Villeneuve d'Ascq Cedex\\
         France}
\email{Daniel.Tanre@univ-lille.fr}

\thanks{This research was supported through the program ``Research in Pairs'' at the Mathematisches
Forschunginstitut Oberwolfach in 2016. The authors thank the MFO for its generosity and hospitality.
The third author was also supported by the MINECO and FEDER research project MTM2016-78647-P}

\subjclass[2010]{55N33, 14F05, 14F43, 55M05, 57P10, 55U30}

\keywords{Intersection homology; Deligne sheaf; Verdier duality; Poincar\'e duality; Blown-up cohomology}

% \arxivreference{}
% \arxivpassword{}
%%%%%%%%%%%%%%%%%%%%%%
%
%End of metadata
%
%%%%%%%%%%%%%%%%%%%%
%To indent subsections in table of contents
\makeatletter
\renewcommand\l@subsection{\@tocline{2}{0pt}{2pc}{5pc}{}}
\renewcommand\l@subsubsection{\@tocline{3}{0pt}{4pc}{10pc}{}}
\makeatother
%\makeatletter

%%%%%%%%%%%%%%%%%%%%%%%%%%%%%% Textclass specific LaTeX commands.
%%%%%%%%%%%%%%%%%
\theoremstyle{plain}
\newtheorem{theorem}{Theorem}

\newtheorem{propositionn}{Proposition}

\newtheorem{proposition}{Proposition}[section]

\newtheorem{lemma}[proposition]{Lemma}
\newtheorem{corollary}[proposition]{Corollary}

\theoremstyle{definition}
\newtheorem{definition}[proposition]{Definition}
\newtheorem{example}[proposition]{Example}

\theoremstyle{remark}
\newtheorem{remark}[proposition]{Remark}

\numberwithin{equation}{section}
%%%%%%%%%%%%%%%%%%%%%%%%%
%

\setcounter{tocdepth}{1}

%%%%%%%%%%%%%%%%%%%%%%%%%%%%Nouvelles commandes%%%%%%%%%%%%%%%%%%%%%%%
%%%%%%%%%%%%%%%%%%%%%%%%%%%%%%%%%

\newcommand{\secref}[1]{Section~\ref{#1}}

\newcommand{\thmref}[1]{Theorem~\ref{#1}}
\newcommand{\propref}[1]{Proposition~\ref{#1}}
\newcommand{\lemref}[1]{Lemma~\ref{#1}}
\newcommand{\corref}[1]{Corollary~\ref{#1}}

\newcommand{\defref}[1]{Definition~\ref{#1}}

%%%%%%%%%%%%%%%%%%%%%%%%%%%

            \newcommand{\ov}{   \overline } 
      \def\tc{{\mathtt c}}
            \def\tv{{\mathtt v}}
            \def\tu{{\mathtt u}}
                    \def\gd{{\mathfrak d}}
\def\gC{{\mathfrak C}}
\def\gc{{\mathfrak c}}
\def\gH{{\mathfrak H}}

\def\rc{{\mathring{\tc}}}
\def\rB{{\mathring{B}}}
\def\codim{{\mathrm{codim\,}}}
\def\colim{{\varinjlim\,}}
\def\lim{{\mathrm{lim\,}}}
\def\max{{\mathrm{max\,}}}
\def\pr{{\rm pr\,}}
\def\Ext{{\mathrm{Ext\,}}}
\def\BM{{\mathrm{BM}}}
\def\im{{\mathrm{Im\,}}}
\def\Sd{{\mathrm{Sd}}}
\def\Cov{{\mathrm{Cov}}}

\def\N{{\mathbb N}}
\def\R{{\mathbb R}}

\DeclareMathOperator{\class}{{\rm class}}
\def\Hom{{\mathrm{Hom}}}
\def\Coker{{\mathrm{Coker}}}
\def\Ker{{\mathrm{Ker}}}
\def\reg{{\mathrm{reg}}}
\def\cD{\mathcal{D}}
\def\cP{\mathcal{P}}
\def\tN{\widetilde{N}}
%%%%%%%%%%%%%%%%%%%%%%%%%%%%%
%NOTAS AL MARGEN

%%%%%%%%%%%%%%%%%%%%%
%%%%%%%%%%%%%%%%%%%%%%%%%%%
\newcommand{\IH}{\mathscr H}
\def\crH{{\mathscr H}}
\def\id{{\mathrm{id}}}
\def\supp{{\mathrm{Supp}}}
%%%%%%%%%%%%%%%%%%%%%%

\begin{document}

\begin{abstract}
In a series of papers the authors introduced the so-called  blown-up  intersection cochains. These cochains are suitable
 to study products and cohomology operations of intersection cohomology of stratified spaces. 
The aim of this paper is to prove that the sheaf versions of the functors of  blown-up  intersection cochains 
are realizations of Deligne's sheaves. This proves that Deligne's sheaves can be incarnated 
at the level of complexes of sheaves by soft sheaves of perverse differential graded algebras. 
We also study Poincar\'e and Verdier dualities of  blown-up  intersections sheaves with the use of
Borel-Moore chains of intersection.
\end{abstract}

\maketitle

\tableofcontents

Let $R$ be a commutative unitary ring and let $M$ be an oriented topological manifold of dimension $n$, without
boundary, but not necessarily compact. Poincar\'e duality is satisfied
as an isomorphism 
\[
H^{*}(M;R)\cong H_{n-*}^{\BM}(M;R)
\]
between singular cohomology and Borel-Moore homology (see \cite{BM}). There is also a compact supports version which is an isomorphism
\[
H^{*}_c(M;R)\cong H_{n-*}(M;R)
\]
between singular cohomology with compact supports and singular homology. 
These two isomorphisms come from the cap product with a fundamental class $[M]\in H^{\BM}_n(M;R)$.

\medskip
\paragraph{\bf Poincar\'e duality and sheaf theory}
Let us recall how to interpret these isomorphisms in terms of complexes of sheaves of $R$-modules, 
where $R$ is a principal ideal domain. We consider  two sheaves, $\mathbf{N}^*$ and $\mathbf{C}^*_{\BM}$.
The sheaf of singular cochains $\mathbf{N^{*}}$ is the sheafification of the presheaf of singular cochains
$$U\mapsto N^*(U;R).$$
The sheaf of
Borel-Moore chains $\mathbf{C}^{*}_{\BM}$ is  the sheafification of the presheaf of relative locally finite singular chains (\cite[section 3.1]{MR2276609})
$$U\mapsto C^{\BM}_{n-*}(M,M\backslash\overline{U};R).$$
These two sheaves
are both acyclic
resolutions of the constant sheaf $\mathbf{\underline{R}}$. In the derived category $\mathcal{D}(M)$ of complexes of sheaves of $R$-modules we have the isomorphisms,
$$\underline{\mathbf{R}}\cong\mathbf{N}^*\cong\mathbf{C}^*_{\BM}.$$
Then taking hypercohomology we have the following isomorphisms,
$$H^*(M;R)\cong\mathbb{H}^*(M;\mathbf{\underline{R}})\cong 
\mathbb{H}^*(M;\mathbf{N^*})\cong \mathbb{H}^*(M;\mathbf{C}^*_{\BM})\cong H_{n-*}^{\BM}(M;R),$$
and the corresponding compact supports version,
$$H^*_c(M;R)\cong\mathbb{H}^*_c(M;\mathbf{\underline{R}})\cong 
\mathbb{H}^*_c(M;\mathbf{N^*})\cong \mathbb{H}^*_c(M;\mathbf{C}^*_{\BM})\cong H_{n-*}(M;R).$$
Let $\mathbf{F}^*$ be a complex of sheaves on $M$.
Borel, Moore and Verdier introduced 
a dual sheaf $\mathbb{D}(\mathbf{F}^*)$ (see \cite[V.7]{Bor}) which gives an isomorphism
$$\mathbb{D}(\mathbf{C}^*_{\BM}[n])\cong \mathbf{N}^*$$
in the derived category $\mathcal{D}(M)$. This proves that the constant sheaf $\underline{\mathbf{R}}$ is self-dual
for Verdier duality: 
$$\mathbb{D}(\underline{\mathbf{R}}[n])\cong \underline{\mathbf{R}}.$$
As a consequence, we get
a bilinear pairing
\[
H^{i}(M;R)\otimes H^{n-i}_c(M;R)\rightarrow R
\]
between singular cohomology and singular cohomology with compact supports
that is nonsingular on the torsion free parts of $H^{i}(M;R)$ and $H^{n-i}_c(M;R)$.

\medskip
\paragraph{\bf Poincar\'e duality and singular spaces}
For singular spaces, Poincar\'e duality is not satisfied in general. However when the space
is stratified, Verdier duality still makes sense and we can ask for
the existence of self-dual sheaves. 
\\
In the case of an oriented pseudomanifold $X$ of dimension $n$, Goresky and MacPherson introduced complexes of sheaves called intersection complexes $\mathbf{IC}^*_{\overline{p}}$ parametrized by sequences of integers called perversities $\overline{p}$ (\cite{GM1, GM2}). For a fixed perversity $\overline{p}$ they give a list of axioms, the Deligne's axioms, that characterize 
$\mathbf{IC}^*_{\overline{p}}$ up to isomorphism in the derived category $\mathcal{D}(X)$. 
Working over a field $\mathbb{F}$, they proved the following duality result 
$$\mathbb{D}(\mathbf{IC}^*_{\overline{p}}[n])\cong \mathbf{IC}^*_{D\overline{p}}$$
where $D\overline{p}$ is the complementary perversity of $\overline{p}$. As a consequence, in cohomology, we get a 
nonsingular bilinear pairing
$$\mathbb{H}^i(X;\mathbf{IC}^*_{\overline{p}})\otimes 
\mathbb{H}^{n-i}_c(X;\mathbf{IC}^*_{D\overline{p}})
\rightarrow \mathbb{F}.$$
This pairing comes from an intersection type product
$$\mathbf{IC}^i_{\overline{p}}\otimes\mathbf{IC}^j_{\overline{q}}\rightarrow \mathbf{IC}^{i+j}_{\overline{p}+\overline{q}}$$ 
defined in the derived category $\mathcal{D}(X)$. 

\medskip
\paragraph{\bf Blown-up cochains}
In \cite{CST1,CST6, CST4} we introduced a functor of singular cochains 
$\widetilde{N}^*_{\overline{p}}$, the  blown-up  cochains (also called Thom-Whitney cochains in the first works). 
In fact, we have a family of cochain complexes $\{\widetilde{N}^*_{\overline{p}}(X;R)\}_{\overline{p}\in\mathcal{P}}$
 compatible with the poset of perversities $\mathcal{P}$  that comes equipped with a cup product,
$$\widetilde{N}^i_{\overline{p}}(X;R)\otimes\widetilde{N}^j_{\overline{q}}(X;R)
\xrightarrow{-\cup-}
\widetilde{N}^{i+j}_{\overline{p}+\overline{q}}(X;R),
$$
whose construction is related to the classical cup product on the singular cochain complex $N^*(X;R)$.

\medskip
There is also the singular chain complex $C^{\overline{p}}_*(X;R)$ introduced by King in \cite{MR800845}
whose sheafification is the intersection sheaf of Goresky and MacPherson. The homology of 
$C^{\overline{p}}_*(X;R)$ is called 
intersection homology.
Let us notice that to intersect chains in $C^{\overline{p}}_*(X;R)$ one generally needs to deform them, 
the cup product defined on
$\widetilde{N}^*_{\overline{p}}(X;R)$
obviates this requirement. Moreover, in \cite{CST4} we introduced a cap product between 
$\widetilde{N}^*_{\overline{p}}(X;R)$ and a generalization $\mathfrak{C}^{\overline{p}}_*(X;R)$ of   
$C^{\overline{p}}_*(X;R)$,
better adapted to large perversities, see \defref{tameNormHom}. 
When $X$ is compact and oriented, we proved that the cap product
with a cycle $\gamma_{X}\in \mathfrak{C}^{\overline{0}}_n(X;R)$ representing the fundamental class  
$\Gamma_X\in \mathfrak{H}^{\overline{0}}_n(X;R)$,
$$\widetilde{N}^i_{\overline{p}}(X;R)
\xrightarrow{-\cap\gamma_X}
\mathfrak{C}^{\overline{p}}_{n-i}(X;R),$$
is a quasi-isomorphism for any ring $R$ (\cite[Theorem B]{CST2}). 
This extends the Poincar\'e duality theorem of Friedman and McClure \cite{FM} established for field coefficients.

\medskip
\paragraph{\bf Outline.}
In this paper we study the sheafification $\mathbf{\widetilde{N}}^*_{\overline{p}}$ (see \defref{def:covV})
of the  blown-up  cochains $\tN^*_{\ov{p}}$. 
After a brief recall in \secref{sec:recall} of the basic definitions, Theorem A of \secref{sec:boum} is our first result:
\emph{ the sheaf $\mathbf{\widetilde{N}}^*_{\overline{p}}$ satisfies Deligne's axioms.} 
As a consequence, we get that $\mathbf{\widetilde{N}}^*_{\overline{p}}$ offers a way to promote 
the multiplicative structure of Deligne's sheaves directly on the complex of sections of 
$\mathbf{\widetilde{N}}^*_{\overline{p}}$.  This multiplicative enrichment at the level of sections has some nice 
consequences for the study of intersection cohomology:   it makes the development 
of a rational homotopy theory possible in this context (\cite{CST1}) and brings a better paradigm for the study of 
Steenrod squares in intersection cohomology (\cite{CST6}).

Let $R$ be a principal ideal domain and $I^*_{R}$ an injective resolution of $R$.
In \secref{sec:BMchains}, we introduce  a complex
of Borel-Moore chains of intersection, denoted by $\mathfrak{C}^{\BM,\overline{p}}_{*}(X;R)$.
 In the case of a PL-pseudomanifold,
we prove (see \propref{prop:spanier}):

\begin{propositionn}\label{prop:first}
Let $X$ be a paracompact PL-pseudomanifold with a perversity $\ov{p}$. Then, there exists a 
quasi-isomorphism, linear for the  structure of perverse
$\widetilde{N}_{\overline{q}}^{*}(X;R)$-modules,
$$
\Phi_X\colon \mathfrak{C}^{\BM,\overline{p}}_{k}(X;R)\rightarrow 
{\Hom}_{k}(\mathfrak{C}^*_{\overline{p},c}(X;R),I_R^*).
$$
\end{propositionn}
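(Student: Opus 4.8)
The plan is to build $\Phi_X$ as a morphism of presheaves (before sheafifying, in the spirit of Spanier's classical comparison of Borel-Moore homology with the Alexander-Spanier / sheaf-theoretic dual) and then to check it is a quasi-isomorphism by a standard Mayer-Vietoris plus colimit argument, using the PL structure to get a cofinal system of open sets with controlled local behaviour. First I would recall that $\mathfrak{C}^{\BM,\ov p}_*(X;R)$ is defined as an inverse limit (over compacta, or equivalently over complements of the open sets of a PL cover) of the ordinary intersection chains $\mathfrak C^{\ov p}_*$, so that by definition a Borel-Moore chain is a compatible family of relative chains $\gamma_K \in \mathfrak C^{\ov p}_*(X,X\setminus K;R)$. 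Dually, $\Hom_*(\mathfrak C^*_{\ov p,c}(X;R),I_R^*)$ unwinds, since compactly supported cochains are a colimit over compacta, into an inverse limit of $\Hom_*(\mathfrak C^*_{\ov p}(X,X\setminus K;R),I_R^*)$. So the construction of $\Phi_X$ reduces, level by level in $K$, to producing a natural chain map
\[
\mathfrak C^{\ov p}_*(X,X\setminus K;R)\longrightarrow \Hom_*\bigl(\mathfrak C^{*}_{\ov p}(X,X\setminus K;R),I_R^*\bigr),
\]
which is nothing but the evaluation/pairing map coming from the tautological pairing between intersection chains and intersection cochains (the one underlying the perverse $\widetilde N^*_{\ov q}$-module structure), composed with the quasi-isomorphism $R\to I_R^*$. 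Passing to the inverse limit over $K$ gives $\Phi_X$; the $\widetilde N_{\ov q}^*$-linearity is inherited from the linearity of that pairing, which is already available from the cited earlier work.

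The heart of the proof is then showing $\Phi_X$ is a quasi-isomorphism. I would argue by the usual "bootstrap" over the poset of open subsets $U\subseteq X$: (i) verify the statement for $U$ inside a single stratum, i.e. a PL manifold, where it is the classical Borel-Moore/Poincaré-Lefschetz duality statement identifying locally finite chains with the dual of compactly supported chains (here no perversity interferes); (ii) verify it for distinguished neighbourhoods $U \cong \R^k\times \mathring c L$ by the cone formula, comparing the cone computations of $\mathfrak C^{\BM,\ov p}$ on one side and of $\Hom(\mathfrak C^*_{\ov p,c},I_R^*)$ on the other — both obey a truncation-type formula in the cone direction governed by $\ov p$, and one checks the map intertwines them; (iii) pass from two opens to their union by Mayer-Vietoris, which holds for $\mathfrak C^{\BM,\ov p}_*$ and, because $I_R^*$ is injective so that $\Hom_*(-,I_R^*)$ is exact and turns the compactly supported Mayer-Vietoris sequence around, also for the right-hand side, with $\Phi$ inducing a map of long exact sequences; and (iv) pass to arbitrary (paracompact) $U$ by a colimit/limit argument over an exhaustion, using that on the left Borel-Moore chains form an inverse limit and on the right compact supports form a colimit whose dual is the corresponding inverse limit, together with a $\lim^1$ or Mittag-Leffler check. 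The PL hypothesis is what makes (i)--(ii) clean, since PL pseudomanifolds admit cofinal systems of distinguished neighbourhoods with PL cones and the chain complexes are genuinely combinatorial.

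The main obstacle I expect is the bookkeeping in step (iv): controlling the interchange of $\Hom(-,I_R^*)$ with the colimit defining compact supports and with the inverse limit defining Borel-Moore chains, and ensuring no $\lim^1$ obstruction appears — this is exactly where the injectivity of $I_R^*$ (hence $R$ a PID) and the paracompactness of $X$ are used, and it is the place where one must be careful that "quasi-isomorphism at each compactum $K$" really does assemble to "quasi-isomorphism of the limits." A secondary technical point is the cone computation in step (ii): one must match the perverse truncation degree appearing in the cone formula for $\mathfrak C^{\BM,\ov p}$ with the one appearing after dualizing the cone formula for $\mathfrak C^{*}_{\ov p,c}$, and verify the sign/degree shifts are consistent with the dimension $n$ of $X$; this is routine but must be done with care because $\mathfrak C^{\ov p}_*$ is the version adapted to large perversities (see \defref{tameNormHom}), not King's original complex, so the cone formula is the one established in the authors' earlier papers rather than the classical one.
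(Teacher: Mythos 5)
Your construction of $\Phi_X$ is exactly the paper's: both sides are rewritten over a cofinal exhaustion $X=\bigcup_i K_i$ by compacta (using \propref{BMCompact} on the left, and the fact that $\Hom$ turns the colimit defining compact supports into an inverse limit on the right), and $\Phi_X$ is the inverse limit of the evaluation maps $\gC^{\ov p}_k(X,X\setminus K_i;R)\to \Hom_k(\gC^*_{\ov p}(X,X\setminus K_i;R),I_R^*)$. Where you diverge is the proof that this is a quasi-isomorphism. The paper does not run a Mayer--Vietoris bootstrap at all: it observes that the structure maps of both towers are surjective, so the Mittag--Leffler condition gives Milnor exact sequences and reduces everything to the single-compactum maps $\psi_{X,i}$; each $\psi_{X,i}$ is then a quasi-isomorphism by the classical algebraic biduality $C_*\to\Hom(\Hom(C_*,R),I_R^*)$ for a complex of free modules over a PID \emph{with finitely generated homology} (Laitinen). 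No induction over open sets, no cone formula, no local computation enters.

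This comparison exposes two genuine gaps in your route. First, you never use the finite generation of $\gH^*_{\ov p}(X,X\setminus K_i;R)$, which is hypothesis (\ref{equa:finite}) and is the real content of the PL assumption (via comparison with simplicial intersection homology) --- not the availability of combinatorial distinguished neighbourhoods. Some finiteness input is unavoidable: for a free module of infinite rank the biduality map $M\to\Hom(\Hom(M,R),I_R^*)$ is not an isomorphism, so the ``tautological pairing'' cannot be a quasi-isomorphism by formal nonsense; your steps (i)--(ii) would have to smuggle finiteness in when identifying the target with a double dual. Second, step (iii) requires a Mayer--Vietoris sequence for the locally finite complexes $\gC^{\BM,\ov p}_*$. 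This is a closed-supports Mayer--Vietoris (restrictions go from the union to the pieces), it is nowhere established in the paper, and it does not follow formally from the subdivision argument that gives Mayer--Vietoris for finite tame chains; you would either have to prove it or route the local-to-global step through the Deligne axioms for the associated sheaves --- which is in fact what the paper does later, in \propref{prop:VerdierDual}, and that is where the local computations of Propositions \ref{BMtimesR} and \ref{BMCone} are actually consumed. With those two ingredients supplied your argument could be completed, but it would be considerably longer than the paper's two-line reduction to Mittag--Leffler plus biduality.
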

Thus
${\Hom}_{k}(\mathfrak{C}^*_{\overline{p},c}(X;R),I_R^*)$
 appears as the bidual of the complex of intersection chains.

\medskip
In \secref{sec:verdier}, we build a diagram of cochain complexes,
$$\xymatrix{
\widetilde{N}^k_{\overline{p}}(X;R)
\ar[r]^-{\mathcal{D}_{X}}
\ar[rd]_-{DP_X}
&{\Hom}_{n-k}(\mathfrak{C}^{*}_{\overline{p},c}(X;R),I_R^*)
\\
&
\mathfrak{C}_{n-k}^{\BM,\overline{p}}(X;R),
\ar[u]_-{\Phi_X}
}$$
where \begin{itemize}
\item $DP_{X}$ is the cap product with a fundamental cycle 
$\gamma_X\in \mathfrak{C}^{\BM,\overline{0}}_{n}(X;R)$,
\item $\Phi_X$ is the morphism of \propref{prop:first}, 
\item $\mathcal{D}_{X}$ is a product with the cocycle $\Phi_{X}(\gamma_{X})$.
\end{itemize}
This diagram relates directly at the chain level Poincar\'e and Verdier dualities
and we prove (see \propref{prop:module} and \corref{cor:poincareBM}):

\begin{propositionn}
The previous diagram is commutative  and all maps are linear quasi-isomorphisms 
for the  structure of perverse
$\widetilde{N}_{\overline{q}}^{*}(X;R)$-modules.
\end{propositionn}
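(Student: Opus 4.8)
The plan is to establish the three-step claim for the commutative triangle
$$\xymatrix{
\widetilde{N}^k_{\overline{p}}(X;R)
\ar[r]^-{\mathcal{D}_{X}}
\ar[rd]_-{DP_X}
&{\Hom}_{n-k}(\mathfrak{C}^{*}_{\overline{p},c}(X;R),I_R^*)
\\
&
\mathfrak{C}_{n-k}^{\BM,\overline{p}}(X;R)
\ar[u]_-{\Phi_X}
}$$
by treating commutativity, the module-linearity of each arrow, and the quasi-isomorphism property separately; once any two of the three arrows are quasi-isomorphisms, the third follows by two-out-of-three. First I would prove commutativity at the chain level: $\mathcal{D}_X$ is by definition the product with the cocycle $\Phi_X(\gamma_X)$, and $DP_X$ is the cap product with $\gamma_X$, so $\mathcal{D}_X = \Phi_X \circ DP_X$ amounts to the assertion that $\Phi_X$ intertwines the cap product $-\cap\gamma_X$ with the external product by $\Phi_X(\gamma_X)$. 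This is exactly the statement that $\Phi_X$ is a morphism of perverse $\widetilde{N}^*_{\overline{q}}(X;R)$-modules — which is part of the conclusion of \propref{prop:first} — applied to the specific module element $\gamma_X$, together with the standard compatibility between cup, cap, and the Hom-dual pairing. So commutativity and module-linearity of $\Phi_X$ reduce to unwinding \propref{prop:first}; the module-linearity of $DP_X$ is the associativity relation $(\alpha\cup\beta)\cap\gamma_X = \alpha\cap(\beta\cap\gamma_X)$ for the blown-up cup and cap products, established in \cite{CST4}; and module-linearity of $\mathcal{D}_X$ is associativity of the external product on the Hom-complex, which is formal.

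Next I would handle the quasi-isomorphism statements. The morphism $\Phi_X$ is already a quasi-isomorphism by \propref{prop:first} (this is where the paracompactness and, in the PL setting, the PL-pseudomanifold hypothesis enter). So it suffices to show that one of $\mathcal{D}_X$ or $DP_X$ is a quasi-isomorphism. The natural candidate is $DP_X$, the cap product with the fundamental cycle: this is the Poincaré duality statement for blown-up intersection cohomology against Borel-Moore intersection chains. I would prove it by the usual local-to-global sheaf-theoretic argument: sheafify the diagram, check that $DP_X$ induces a quasi-isomorphism of complexes of sheaves by verifying it on stalks, and then pass to hypercohomology. On a distinguished neighborhood $U\cong \R^i\times\mathring{\tc}L$ the stalk computation reduces, via the cone formula for both $\widetilde{N}^*_{\overline{p}}$ (Theorem A and the cone formula recalled in \secref{sec:recall}) and for $\mathfrak{C}^{\BM,\overline{p}}_*$, to the statement that the local duality map is an isomorphism, which is a direct computation with the cone. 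Since $\mathbf{\widetilde{N}}^*_{\overline{p}}$ satisfies Deligne's axioms (Theorem A) and the sheafification of $\mathfrak{C}^{\BM,\overline{p}}_*$ is, up to the appropriate shift and perversity complement, Verdier-dual to Deligne's sheaf, the sheaf-level map $\mathbf{DP}_X$ is a map between a sheaf satisfying Deligne's axioms and (a shift of) the Verdier dual of the complementary Deligne sheaf; once it is checked to be an isomorphism on the stalks over the open dense stratum — where it is ordinary Poincaré duality on a manifold — and compatible with the cone formulas, it is a quasi-isomorphism everywhere.

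The cleanest route is probably: (1) prove commutativity and all module-linearity claims by the algebraic unwinding above, citing \propref{prop:first} and \cite{CST4}; (2) invoke \propref{prop:first} for $\Phi_X$; (3) prove $\mathcal{D}_X$ is a quasi-isomorphism — this is the most sheaf-theoretically transparent, since $\mathcal{D}_X$ is, after sheafification, literally the Verdier duality isomorphism $\mathbb{D}(\mathbf{C}^{\BM}_{\overline{p}}[n])\cong \mathbf{N}^*_{\overline{p}}$ composed with identifications, so one reduces to the known self-duality-type statement for Deligne sheaves over a PID (the torsion-free/injective-resolution version), using that $I_R^*$ is an injective resolution; (4) deduce $DP_X$ is a quasi-isomorphism by two-out-of-three. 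I expect the main obstacle to be the bookkeeping in step (3): making the sheafification of $\mathcal{D}_X$ match the Borel–Moore–Verdier dualizing construction on the nose (degree shifts, the perversity $D\overline{p}$ versus $\overline{p}$, and the role of $I_R^*$ as dualizing complex over a PID rather than a field), and checking that the external product with $\Phi_X(\gamma_X)$ is precisely the evaluation pairing that Verdier duality predicts. Once the local model over $\R^i\times\mathring{\tc}L$ is pinned down, the global statement follows from the fact that both source and target sheaves satisfy the same support and costalk vanishing conditions, so a quasi-isomorphism on the regular stratum propagates by induction on the strata using the attaching/cone formulas recalled in \secref{sec:recall} and the defining property of $\mathbf{\widetilde{N}}^*_{\overline{p}}$ from Theorem~A.
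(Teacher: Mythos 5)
Your proposal is correct and follows essentially the same route as the paper: commutativity is checked on the unit $1_X$ and propagated by module-linearity, $\Phi_X$ is a quasi-isomorphism by Proposition~\ref{prop:spanier}, $\mathcal{D}_X$ is shown to be a quasi-isomorphism by identifying its sheafification with a map into $\mathbb{D}(\mathbf{C}^*_{\overline{p}}[n])$ (which satisfies Deligne's axioms for the \emph{same} perversity $\overline{p}$, so no complementary perversity actually enters) and checking it on the regular part, and $DP_X$ then follows by two-out-of-three. The only refinement the paper adds is isolating the $\mathcal{D}_X$ step as Proposition~\ref{prop:explicitqiso}, where an arbitrary quasi-isomorphism $\boldsymbol{\chi}_{\overline{p}}$ into the injective sheaf $\mathbb{D}(\mathbf{C}^*_{\overline{p}}[n])$ is perturbed into the module-linear map $\omega\mapsto\omega\perp\beta_U$ with $\beta_U$ cohomologous to $\chi_{\overline{0}}(1_U)$, which is exactly the bookkeeping you anticipated as the main obstacle.
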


\medskip
We fix for the sequel a \textbf{principal ideal domain} $R$.  For a topological space $X$, we denote by  $\rc X = X \times [0,1[/ X \times \{ 0\}$ the \textbf{open cone} on $X$. The apex of the cone is $\tv =[-,0]$.

%%%%%%%%%%%%%%%%%%%%%%
\section{Pseudomanifolds and perversities}\label{sec:recall}
\begin{quote}
This section is a recall of definitions of  basic objects: pseudomanifolds and perversities. We consider perversities defined on each stratum
which cover the original case of Goresky and MacPherson.
\end{quote}

\subsection{Topological background}
\begin{definition}
A \textbf{filtered space} $(X,(X_{i})_{0\leq i\leq n})$,
is a  Haussdorff  topological space $X$
equipped with a filtration
by closed subspaces, 
\[
X_{0}\subseteq X_{1}\subseteq\dots\subseteq X_{n}=X,
\]
such that the complement, $X_{n}\backslash X_{n-1}$, is non-empty.
The integer $d(X)=n$ is the \textbf{formal dimension} of
$X$. The connected components, $S$, of $X_{i}\backslash X_{i-1}$
are the \textbf{strata} of $X$ and we set $d(S)=i$. The strata of
$X_{n}\backslash X_{n-1}$ are called\textbf{ regular}, the other
being called singular. We denote by $\mathcal{S}_{X}$ the set of
non-empty strata of $X$. The sub-space $X_{n-1}$ is the\textbf{
singular set} also  denoted by $\Sigma _X$.
\end{definition}

\begin{definition}
A \textbf{stratified topological pseudomanifold} or pseudomanifold for short, of dimension $n$ is a filtered topological space
 $(X,(X_{i})_{0\leq i\leq n})$
such that, for any $i$, $X_{i}\backslash X_{i-1}$ is a
topological manifold of dimension $i$ or the empty set. 
Moreover, for any point $x\in X_{i}\backslash X_{i-1}$,$i\neq n$,
we have:
\begin{enumerate}
\item an open neighbourhood, $V$, of $x\in X$, together with the \textbf{induced
filtration } $V_i = V \cap X_i$, 
\item an open neighbourhood, $U$, of $x\in X_{i}\backslash X_{i-1}$,
\item a compact pseudomanifold $L$ of dimension $n-i-1$,
called the \textbf{link} of $x$, the cone $\overset{\circ}{c} L$ being endowed
with the \textbf{conical filtration } $(\overset{\circ}{c} L)_{i}=\overset{\circ}{c}(L_{i-1})$,  
\item a homeomorphism $\varphi\colon U\times  \overset{\circ}{c} L\to V$ such that:\end{enumerate}
\begin{itemize}
\item $\varphi(u,\tv)=u$, for all $u\in U$, 
\item $\varphi(U\times \overset{\circ}{c} L_{j})=V\cap X_{i+j+1}$, for all $j\in\{0,\dots,n-i-1\}$.
\end{itemize} 
The pair $(\varphi, V)$ is called a \textbf{local chart.}
A pseudomanifold $X$ is {\bf $R$-oriented} if its regular part $X\backslash \Sigma$ is $R$-oriented.
\end{definition}

\begin{definition}
Let $(X,(X_{i})_{0\leq i\leq n})$ and $(Y,(Y_{i})_{0\leq i\leq n})$ be pseudomanifolds.
A \textbf{stratum preserving map} is a continuous
map $f\colon X\rightarrow Y$ such that for any stratum $S\in\mathcal{S}_{X}$
there exists a stratum $S^{f}\in\mathcal{S}_{Y}$ such that $f(S)\subset S^{f}$
and $\codim(S^{f})= \codim(S)$.
\end{definition}

\subsection{Perversities and perverse spaces}
\begin{definition}\label{def:perversity}
A \textbf{perversity} on a filtered topological space $(X,(X_{i})_{0\leq i\leq n})$,
is a function, $\overline{p}\colon\mathcal{S}_{X}\to\mathbb{Z}$,
defined on the set of strata of $X$ and such that $\overline{p}(S)=0$
on any regular stratum $S$. The couple $(X,\overline{p})$ is called
a \textbf{perverse space}. If $X$ is a pseudomanifold, we say a \textbf{perverse pseudomanifold.}
We denote by $\mathcal{P}$ the set of perversities on $X$.  
\end{definition}

\noindent Let us review some important perversities.
\begin{enumerate}
\item A \textbf{GM perversity} (for Goresky and Mac-Pherson, see \cite{GM1}) is a sequence of integers
$\overline{p}\colon \{0,\dots,n\}\rightarrow\mathbb{N}$ verifying 
$\overline{p}(0)=\overline{p}(1)=\overline{p}(2)=0$ and $\overline{p}(i)\leq\overline{p}(i+1)\leq\overline{p}(i)+1$.
It induces a perversity as above, defined as $\overline{p}(S)=\overline{p}(\codim(S))$.
\item The \textbf{zero perversity} $\overline{0}$ is the GM perversity defined as $\overline{0}(i)=0$
for any $i \in \{0,\dots,n\}$.

\item The \textbf{top perversity} $\overline{t}$ is the GM perversity given by $\overline{t}(i)=i-2$
for $i\geq2$. We say that two perversities $\overline{p}$ and $\overline{q}$
are \textbf{complementary} if for any singular stratum $S$ we have:
\[
\overline{p}(S)+\overline{q}(S)=\overline{t}(S)=n-\dim(S)-2.
\]
Each perversity $\overline{p}$ has a {\bf complementary perversity} 
$D\overline{p}$ defined by $D\overline{p}=\overline{t}-\overline{p}$. When $\overline{p}$ is a GM perversity $D\overline{p}$ is also GM.
\end{enumerate}

\begin{definition}
Let $f\colon X\rightarrow Y$ be a stratum preserving map and  $\overline{q}$
 a perversity on $Y$. We define the\textbf{ pull-back perversity}
$f^{*}\overline{q}$ thanks to the formula:
\[
f^{*}\overline{q}(S)=\overline{q}(S^{f})
\]
for any $S\in S_{X}$. 
\end{definition}

\begin{example}
Let us review the three fundamental examples that we encounter
in this paper. 

\begin{enumerate}
\item Let $U\subset X$
be an open set of a pseudomanifold, endowed with the induced filtration. The inclusion $i \colon U\hookrightarrow X$
is a stratum preserving map. By abuse of notation, the pull-back perversity  $i^{*}\overline{p}$ is also  denoted by $\overline{p}$.

\item Let $M$ be an $m$-dimensional manifold and $(X,\ov{p})$ be a pseudomanifold. The \textbf{product filtration }on $X\times M$ is defined by
$(X\times M)_{i}=X_{i-m}\times M$. The projection map 
$
\pi\colon X\times M\rightarrow X
$
is a stratum preserving morphism. By abuse of notation, the pull-back perversity $\pi^{*}\overline{p}$
will also be denoted by $\overline{p}$.

\item Let $L$ be a compact filtered space. A perversity $\overline p$ on the cone $\rc L$ is determinate by  the number $\overline p (\tv)$
and a perversity on $L$, still denoted by $\overline p$. They are related by the formula $\overline p(S) = \overline p(S \times ]0,1[)$, for each $S \in \mathcal S_L$.
\end{enumerate}
\end{example}

\subsection{Perverse structures}

Unlike singular cohomology that comes equipped with a cup product, intersection cohomology at a fixed perversity 
is not an algebra. Rather it is parametrized by a family of perversities, and it has
a product that is compatible with perversities. This has been formalized
by M. Hovey who introduced perverse chain complexes, perverse differential
graded algebras in \cite{Hov}. Let us review this formalism. It relies
on the fact that the set of perversities has two remarkable features.

(1 ) The set $\mathcal{P}$ of perversities on a filtered space $X$ is a poset: we say that
$\overline{p}\geq\overline{q}$ if for any stratum $S$ we have $\overline{p}(S)\geq\overline{q}(S)$. 

(2) We can add perversities: $(\overline{p}+\overline{q})(S)=\overline{p}(S)+\overline{q}(S)$.
And the zero perversity $\overline{0}$ is a unit for this addition.

\smallskip
Thanks to these two properties we define perverse chain complexes,
perverse algebras. 
\begin{definition}
Let $(\mathcal{M},\square,\mathbb{I})$ be a symmetric monoidal category. 

(1) A \textbf{perverse object} in $\mathcal{M}$ is a functor $M_{\bullet}\colon \mathcal{P}\rightarrow \mathcal{M}$.
That is,  we have for each perversity $\overline{p}$ an object $M_{\overline{p}}$
of $\mathcal{M}$ and a collection of morphisms of $\mathcal{M}$,
\[
\gamma_{\overline{p}\leq\overline{q}}\colon M_{\overline{p}}\rightarrow M_{\overline{q}},
\]
whenever $\overline{p}\leq\overline{q}$ such that $\gamma_{\overline{p}\leq\overline{p}}=\id$
and $\gamma_{\overline{q}\leq\overline{r}}\circ\gamma_{\overline{p}\leq\overline{q}}=\gamma_{\overline{p}\leq\overline{r}}$. 

(2) A \textbf{perverse monoid} in $\mathcal{M}$ is a perverse object $M_{\bullet}$
together with a unit $\eta\colon \mathbb{I}\rightarrow M_{\overline{0}}$ and associative products,
\[
\mu\colon M_{\overline{p}}\square M_{\overline{q}}\rightarrow M_{\overline{p}+\overline{q}},
\]
 compatible with the poset structure.
 A perverse monoid in the category of complexes is called a \textbf{perverse differential graded algebra}.

(3) A (left) \textbf{perverse module} over a perverse monoid $A_{\bullet}$ 
is a perverse object $M_{\bullet}$ together with products
$$\bot\colon A_{\ov{p}}\square M_{\ov{q}}\to M_{\ov{p}+\ov{q}}$$
compatible with the poset structure and satisfying the classical properties of modules. 
\end{definition}

\begin{example}
We will consider the next examples of categories $\mathcal{M}$. 
\\
- The category of graded $R$-modules: intersection homology and cohomogy provide examples of perverse objects in this category. 
\\
- The category of complexes of $R$-modules: the  blown-up  cochains introduced in the next section, equipped with its cup product is a perverse monoid.
\\
- The category of complexes of sheaves of $R$-modules on $X$.
\end{example}

%%%%%%%%%%%%%%%%%%%%%%%%%%%%%%%%%%%%%%%%%%
%%%%%%%%%%%%%%%%%%%%%%%%%%%%%%%%%%%%%%%%%%
\section{Blown-up  intersection cohomology}\label{sec:boum}

\begin{quote}
In this section we introduce the cochain complex of  blown-up intersection  cochains
$\widetilde{N}_{\overline{p}}^{*}$ and prove,
in \thmref{DeligneTW},  that its sheafification $\mathbf{\widetilde{N}^*_{\overline{p}}}$ 
gives a nice realization of the  Deligne sheaf.
\end{quote}

%%%%%%%%%%%%%
\subsection{Blown-up  intersection cochains}

We start with the standard $k$-simplex viewed as an ordered simplicial complex $ \Delta$. We associate
to this $k$-simplex the simplicial chain complex $N_{*}( \Delta )$,
whose component of $l$-chains is the $R$-module generated by the $l$-faces of
$ \Delta $, together with the usual differential given by the face
operators. We define $N^{*}( \Delta  )$ as the $R$-linear dual
of $N_{*}( \Delta  )$. This simplicial complex has a geometric
realization which is the standard $k$-simplex embedded in $\mathbb{R}^{k+1}$,
by abuse of notations we also denote this geometric realization as $ \Delta $.

Our constructions use simplices in a "good" position with respect to the filtration of $X$, let us give a precise definition.

\begin{definition}\label{def:filteredsimp}
Let $(X,\overline{p})$ be a perverse space.

(i) A \textbf{filtered simplex} $\sigma \colon \Delta \rightarrow X$
is a singular simplex such that each $\sigma^{-1}(X_{i})$ is a face
of $ \Delta $. A filtered simplex admits a join decomposition:
\[
 \Delta =\Delta_{0}*\dots*\Delta_{n}.
\]
A filtered simplex is said to be \textbf{regular} if $\Delta_{n}\neq\emptyset$.

(ii) Let $FSimp(X)$ be the category whose objects are the 
regular  simplices of $X$ and whose morphisms are induced
by the face operators of the simplices.
\end{definition}

If $\sigma \colon\Delta\rightarrow X$
is a regular simplex and  $F$  a regular face
of $\Delta$, we notice that the canonical map $i\colon F \rightarrow\Delta$
induces a morphism in $FSimp(X)$ between $\sigma\circ i$ and $\sigma$. 

\medskip
Having defined filtered simplices we can introduce the functor of
 blown-up  cochains as a system of coefficients on the category
$FSimp(X)$.

\begin{definition}
Let $(X,\overline{p})$ be a perverse  space and $\sigma \colon\Delta_{0}*\dots*\Delta_{n}\rightarrow X$ a fixed regular simplex.
\\
(i) To $\sigma$ we associate the cochain complex:
\[
\widetilde{N}^{*}_{\sigma}=N^{*}(\tc\Delta_{0})\otimes N^{*}(\tc\Delta_{1})\otimes\dots\otimes N^{*}(\tc\Delta_{n-1})\otimes N^{*}(\Delta_{n}).
\]

\noindent
(ii) For any regular face $F$ of $\sigma$ we have the restriction
morphism:
\[
i^*\colon \widetilde{N}^{*}_{\sigma}\rightarrow\widetilde{N}^{*}_{\sigma\circ i}.
\]
We define the \textbf{blown-up  cochains} as the inverse limit 
\[
\widetilde{N}^{*}(X)={\varprojlim}_{\sigma\in FSimp(X)}\widetilde{N}^{*}_{\sigma}.
\]
\end{definition}

\noindent Said differently a  blown-up  cochain is a collection 
$\left\{ \omega_{\sigma}\right\} _{\sigma \colon\Delta\rightarrow X}$
of tensor products of cochains parametrized by regular simplices,
satisfying gluing conditions with respect to restrictions
to the regular faces. 
 A blown-up cochain owns a perverse degree relatively to any stratum of $X$. Let us see that. 

\begin{definition}
Let  $\Delta = \Delta_0 * \dots * \Delta_n$ be a regular simplex.
For any $1\leq k \leq n$ we consider the restriction map,\\
$\rho_{k}\colon N^{*}(\tc
\Delta_{0})\otimes\dots\otimes N^{*}(\tc\Delta_{n-1})\otimes N^{*}(\Delta_{n})
\to
N^*(\tc\Delta_0)\otimes \dots  \otimes N^*(\tc \Delta_{n-k-1})\otimes N^*(\Delta_{n-k})\otimes N^*(\tc\Delta_{n-k+1})
\otimes \dots\otimes N^{*}(\tc\Delta_{n-1}) \otimes N^{*}(\Delta_{n})
$.\\
For each $\eta \in N^{*}(\tc\Delta_{0})\otimes\dots\otimes N^{*}(\tc\Delta_{n-1})\otimes N^{*}(\Delta_{n})
 $, the cochain $\rho_k(\eta)$ can be written as a finite sum  $\sum_{i\in I} a_i\otimes b_i$,
  where $\{a_i \}_{i\in I}$ is an $R$-basis of the free module $N^*(\tc\Delta_0)\otimes \dots \otimes N^*(\Delta_{n-k} )$
   and $b_i\in N^*(\tc\Delta_{n-k+1})\otimes\dots\otimes N^*(\Delta_n)$. We set 
\[
\left\Vert \eta\right\Vert _{k}=\begin{cases}
- \infty & \text{if}\;\rho_k(\eta)=0,\\
\max\{\deg(b_i)\mid b_i\neq 0\} & \text{if}\;\rho_k(\eta)\neq 0.
\end{cases}
\]
\end{definition}
\begin{definition}
Let $\omega$ be a cochain of $\widetilde N^*(X;R)$.
The \textbf{perverse degree of $\omega$ along a singular stratum,} $S \in \mathcal{S}_{X}$, is equal to
\begin{equation*}\label{equa:perversstrate}
\|\omega\|_{S}=\sup\left\{\|\omega_{\sigma}\|_{\codim S}\mid 
\sigma\colon\Delta\to X \text{ regular with }\sigma(\Delta)\cap S\neq \emptyset\right\}.
\end{equation*}
where $\sup \emptyset=-\infty$.

Let us fix a perversity $\overline{p}$ on $X$. We say that $\omega\in \widetilde N^*(X;R)$
is \textbf{$\overline{p}$-allowable} if for any singular stratum
$S$ we have $\left\Vert \omega\right\Vert _{S}\leq\overline{p}(S)$.
The  cochain $\omega$ is a \textbf{$\ov{p}$-intersection cochain} if $\omega$ and its coboundary, $\delta \omega$,
are $\ov{p}$-allowable.  We denote by $\widetilde{N}^*_{\ov{p}}(X;R)$
the complex of  $\overline{p}$-intersection cochains and
by $\IH^*_{\overline{p}}(X;R)$  its homology, called
 \textbf{blown-up  $\ov p$-intersection cohomology} of $X$ with coefficients in~$R$,
for the perversity $\overline{p}$.
\end{definition}

%%%%%%%%%%%%%%%
\subsection{Properties of  blown-up  cochains}

Let us recall the main properties of  blown-up  cochains (see \cite{CST4}, \cite{CST2} 
or \cite{CST1}, \cite{CST6}, \cite{CST7} where they are called TW-cochains). 
They satisfy 
the Mayer-Vietoris property \cite[Theorem C]{CST4} and are natural with respect to stratum preserving morphisms (for a careful study of this naturality \cite[Theorem A]{CST4}).

We also have a \textbf{stratified homotopy invariance} \cite[Theorem D]{CST4}.
 If $(X,\overline p)$ is a perverse space then
the canonical projection $\pr \colon X\times\mathbb{R}\rightarrow X$, which is a stratum preserving map, induces 
a quasi-isomorphism
\begin{equation}\label{pro}
\pr^* \colon\widetilde{N}_{\overline{p}}^{*}(X;R)\rightarrow\widetilde{N}_{\overline{p}}^{*}(X\times\mathbb{R};R).
\end{equation}

A crucial property is the \textbf{computation of the blown-up intersection cohomology of an open cone}.
 Let $L$ be a compact filtered topological
space and  $\overline p$  a perversity on the cone $\rc L$, of apex $\tv$. We have  \cite[Theorem E]{CST4}:
\begin{equation}\label{ConeTW}
\IH_{\overline{p}}^{k}(\overset{\circ}{c}L;R)=\begin{cases}
\IH_{\overline{p}}^{k}(L;R), & \text{if }k\leq\overline{p}(\tv),\\
0 & \text{if }k>\overline{p}(\tv).
\end{cases}
\end{equation}
If $k\leq\overline{p}(\tv)$, the previous isomorphism is given by the inclusion $L \times ]0,1[ = \rc L - \{\tv\} \hookrightarrow \rc L$.

\medskip
Moreover, the collection $\left\{ \widetilde{N}_{\overline{p}}^{*}(X;R)\right\} _{\overline{p}\in \mathcal{P}}$
is a perverse differential graded algebra whose products,
$$
-\cup-\colon
\widetilde{N}_{\overline{p}}^{*}(X;R)\otimes\widetilde{N}_{\overline{q}}^{*}(X;R) \longrightarrow
\widetilde{N}_{\overline{p}+\overline{q}}^{*}(X;R),
$$
are induced by the classical cup product on the tensor product 
$N^*(\tc\Delta_0)\otimes\dots\otimes N^*(\tc\Delta_{n-1})\otimes N^*(\Delta_n)$ (see \cite[Section 4]{CST4}).\

%%%%%%%%%%%%%%%%%%
\subsection{The sheaf of  blown-up intersection cochains}

\begin{definition}\label{def:covV}
Let us consider an open covering $\mathcal{U}$ of $X$ the complex
of \textbf{small $\mathcal{U}$-cochains} denoted by $\widetilde{N}_{\overline{p}}^{*,\mathcal{U}}(X;R)$
is the chain complex of  blown-up  cochains defined on filtered
simplices included in an open set of the covering $\mathcal{U}$, see \cite[Definition 9.6]{CST4}.
If $\mathcal{U}'$ is a covering that refines $\mathcal{U}$, we have
canonical restriction maps 
\[
\widetilde{N}_{\overline{p}}^{*}(X;R)\rightarrow\widetilde{N}_{\overline{p}}^{*,\mathcal{U}}(X;R)\rightarrow\widetilde{N}_{\overline{p}}^{*,\mathcal{U}'}(X.R).
\]
Let $V$ be an open subset of $X$. We define the category $\Cov(V)$
of open coverings of $V$ as the category associated to the poset
of open coverings. The \textbf{sheaf of  blown-up  intersection
cochains} $\widetilde{\mathbf{N}}_{\overline{p}}^{*}$ is defined
by its sections as the direct limit 
\[
\Gamma(V;\widetilde{\mathbf{N}}_{\overline{p}}^{*})=\colim_{\mathcal{U}\in \Cov(V)}\widetilde{N}_{\overline{p}}^{*,\mathcal{U}}(V;R).
\]
\end{definition}

\noindent As the cup product is natural with restriction maps, the family $\{\widetilde{\mathbf{N}}_{\overline{p}}^{*}\}_{\overline{p}\in\mathcal{P}}$
is a  perverse differential graded algebra.

\begin{proposition}\label{SoftFlatTW}
 Let $(X,\overline{p})$ be a perverse space. 
The complex of sheaves $\widetilde{\mathbf{N}}_{\overline{p}}^{*}$  over $X$ 
is a complex of flat and soft sheaves.
\end{proposition}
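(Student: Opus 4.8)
The plan is to prove the two assertions—flatness and softness—separately, exploiting in both cases the direct‐limit description of $\Gamma(V;\widetilde{\mathbf{N}}^*_{\overline p})$ from \defref{def:covV} and the fact that direct limits commute with the relevant operations.

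For flatness, I would first recall that a sheaf is flat if and only if each of its stalks is a flat $R$-module, and that a filtered colimit of flat modules is flat. The stalk at $x\in X$ is $\colim_{x\in V}\Gamma(V;\widetilde{\mathbf{N}}^*_{\overline p}) = \colim_{x\in V}\colim_{\mathcal U\in\Cov(V)}\widetilde N^{*,\mathcal U}_{\overline p}(V;R)$, a filtered colimit. So it suffices to show that each $\widetilde N^{*,\mathcal U}_{\overline p}(V;R)$ is a flat $R$-module, or more simply that $\widetilde N^k_{\overline p}(V;R)$ is flat in each degree. Here I would unwind the definition: $\widetilde N^*_{\overline p}(V;R)$ is a submodule of the inverse limit $\varprojlim_{\sigma}\widetilde N^*_\sigma$, where each $\widetilde N^*_\sigma$ is a finite tensor product of modules $N^*(\tc\Delta_i)$ and $N^*(\Delta_n)$, hence a free (in particular flat) $R$-module. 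The subtlety is that submodules of flat modules need not be flat in general; this is exactly where $R$ being a principal ideal domain enters. Over a PID, a module is flat if and only if it is torsion-free, and any submodule of a torsion-free module is torsion-free. Since $\varprojlim_\sigma\widetilde N^*_\sigma$ is a product of free modules, it is torsion-free, so the submodule of $\overline p$-allowable cochains $\widetilde N^k_{\overline p}(V;R)$ is torsion-free, hence flat. Passing to the filtered colimit over coverings and then over neighborhoods of $x$ keeps flatness, so every stalk is flat and $\widetilde{\mathbf{N}}^*_{\overline p}$ is a complex of flat sheaves.

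For softness, the plan is to use the standard criterion: a sheaf $\mathbf{F}$ on a paracompact (or more generally a reasonable) space $X$ is soft if for every closed set $A\subseteq X$ the restriction $\Gamma(X;\mathbf{F})\to\Gamma(A;\mathbf{F})$ is surjective; and this is implied by (and for many purposes equivalent to) the existence of partitions of unity, i.e.\ the sheaf being a module over the sheaf of germs of continuous functions, or—more directly—by showing any section over a closed set extends after passing to a suitable refinement of a covering. The key point is that the sheaf of small $\mathcal U$-cochains, being built from singular cochains on filtered simplices, admits cutoff/extension operations: given a section of $\widetilde{\mathbf N}^*_{\overline p}$ near $A$, represented by a $\overline p$-intersection cochain on filtered simplices supported in some covering, one extends it by zero on simplices that stay away from $A$, after first barycentrically subdividing (which is permitted since we are taking the colimit over all coverings, and subdivision is the standard comparison map for small cochains—this is precisely the Mayer–Vietoris machinery of \cite[Theorem C]{CST4}). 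Concretely, I would mimic the classical proof that the sheaf of singular cochains $\mathbf N^*$ is soft: one shows $\Gamma(X;\widetilde{\mathbf N}^*_{\overline p})\to\Gamma(A;\widetilde{\mathbf N}^*_{\overline p})$ is onto using that a germ of a small cochain along $A$ comes from an actual small cochain on some open $U\supseteq A$, and then that, after refining the covering so that every simplex meeting $A$ lies in $U$, one can extend by $0$ on the rest; the extended cochain is still $\overline p$-allowable because perverse degree is computed simplex-by-simplex and zero cochains contribute $-\infty$. Finally, since the $\widetilde{\mathbf N}^{*,\mathcal U}_{\overline p}$ are modules over the soft sheaf of small singular cochains (the cup product and the module structure are natural with restrictions), one may alternatively conclude softness from the general fact that a sheaf of modules over a soft sheaf of rings is soft; I would state whichever of these two routes is cleanest in the paper's conventions.

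The main obstacle is the softness half, and within it the extension-by-zero step: one must check carefully that passing to a refinement of coverings (subdivision) is a well-defined operation on the colimit and that truncating a $\overline p$-intersection cochain to the simplices meeting $A$ does not destroy the coboundary-allowability condition. Flatness is essentially formal once one invokes the PID hypothesis and the torsion-free characterization of flat modules; the only thing to be careful about there is that we really are working over a PID (as fixed in the text) so that "submodule of free is flat" is legitimate, since over a general ring this fails. I expect the write-up to spend most of its length on the softness argument, quoting \cite[Theorem C]{CST4} (Mayer–Vietoris) and the naturality statements of \cite[Theorem A]{CST4} to justify the subdivision and restriction maps, and to dispose of flatness in a sentence or two.
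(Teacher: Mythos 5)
Your flatness argument is essentially the paper's, step for step: reduce via filtered colimits to the small $\mathcal{U}$-cochains, note that $\widetilde{N}^{*}_{\overline{p}}(V;R)$ is a submodule of the torsion-free module $\widetilde{N}^{*}(V;R)$, and invoke the equivalence of torsion-free and flat over a principal ideal domain. For softness the paper takes the second of your two routes, and it is worth seeing why it avoids the first entirely. The cup product makes $\widetilde{\mathbf{N}}^{*}_{\overline{p}}$ a sheaf of modules over $\widetilde{\mathbf{N}}^{0}_{\overline{0}}$; the product-preserving operator $N^{*}(X;R)\to\widetilde{N}^{*}_{\overline{0}}(X;R)$ of \cite[Lemma 10.2]{CST4} then gives a morphism of sheaves of rings $\mathbf{N}^{0}\to\widetilde{\mathbf{N}}^{0}_{\overline{0}}$, so that $\widetilde{\mathbf{N}}^{*}_{\overline{p}}$ is a complex of modules over the soft sheaf of rings $\mathbf{N}^{0}$, and softness follows from the general fact you quote. (Note that this bridge is needed: the cup product alone only gives a module structure over blown-up $\overline{0}$-cochains, not over ordinary singular $0$-cochains.) Your primary route, truncation to simplices near $A$ followed by extension by zero, has a gap beyond the ones you flag: a blown-up cochain is a family $\{\omega_{\sigma}\}$ compatible under restriction to regular faces, and a simplex meeting $A$ generally has regular faces that do not meet $A$; setting the cochain to zero on such a face while keeping it on the ambient simplex violates the compatibility defining an element of the inverse limit, so the truncated collection is not a blown-up cochain at all. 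The module structure is precisely the device that repairs this (and the coboundary-allowability worry you raise): multiplying by a global degree-zero section $f$ of perverse degree $\leq\overline{0}$ supported near $A$ yields a genuine element of $\widetilde{N}^{k}_{\overline{0}+\overline{p}}=\widetilde{N}^{k}_{\overline{p}}$. Since you named the module-theoretic alternative yourself, the proposal succeeds, but only via that second route.
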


\begin{proof}
(Flatness) Let us prove that for any $x\in X$ the stalk $(\widetilde{\mathbf{N}}_{\overline{p}}^{*})_x$ is flat.  
A direct limit of flat modules being also flat, it suffices to prove that the module of sections 
$\Gamma(V;\widetilde{\mathbf{N}}_{\overline{p}}^{*})$ is a flat module for any open subset $V\subset X$. 
This module of sections is also the direct limit of small $\mathcal{U}$-cochains 
$\widetilde{N}_{\overline{p}}^{*,\mathcal{U}}(V;R)$.
We are therefore reduced to prove that these small cochains are flat.  
As we work over a principal ideal domain, a torsion free module is flat and a sub-module of a torsion free module 
is also torsion free. Thus let us use the fact that 
$\widetilde{N}_{\overline{p}}^{*}(V;R)\subset \widetilde{N}^{*}(V;R)$ and that 
$\widetilde{N}^{*}(V;R)$ is $R$-torsion free.   
\\
(Softness) For any perversity $\overline{p}$ the complex of sheaves $\widetilde{\mathbf{N}}_{\overline{p}}^{*}$
is a complex of $\widetilde{\mathbf{N}}_{\overline{0}}^{*}$-modules,
this structure being induced by the cup product 
$
\mathbf{\widetilde{N}}_{\overline{0}}^{*}\otimes\widetilde{\mathbf{N}}_{\overline{p}}^{*}\xrightarrow{- \cup -}\widetilde{\mathbf{N}}_{\overline{p}}^{*}.
$
In particular they are $\widetilde{\mathbf{N}}_{\overline{0}}^{0}$-modules.

We have constructed in \cite[Lemma 10.2]{CST4} an $R$-linear operator $N^* (X;R) \to \widetilde N^*_{\overline 0} (X;R)$.
As this  operator preserves the product, we have a morphism of sheaves of rings 
$\mathbf{N}^0\rightarrow \widetilde{\mathbf{N}}_{\overline{0}}^{0}$
between the singular zero cochains and the blown-up  zero cochains in perversity $\ov{0}$.
We recall that a complex of sheaves of modules over a soft sheaf of unital rings is soft. We conclude by using the fact that the sheaf $\mathbf{N}^0$ is soft.
\end{proof}

\begin{proposition}
Let $(X,\overline{p})$  be a perverse space. We  have the following isomorphisms
\[
{\IH_{\overline{p}}^{*}(X;R)\cong H^{*}(\Gamma(X;\widetilde{\mathbf{N}}_{\overline{p}}^{*}))\cong\mathbb{H}^{*}(X;\widetilde{\mathbf{N}}_{\overline{p}}^{*})}.
\]
\end{proposition}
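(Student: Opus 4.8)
The statement to prove is a chain of two isomorphisms:
\[
\IH_{\overline{p}}^{*}(X;R)\cong H^{*}(\Gamma(X;\widetilde{\mathbf{N}}_{\overline{p}}^{*}))\cong\mathbb{H}^{*}(X;\widetilde{\mathbf{N}}_{\overline{p}}^{*}).
\]
The plan is to handle the two isomorphisms separately, each being essentially formal once the right inputs are in place.

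For the second isomorphism, $H^{*}(\Gamma(X;\widetilde{\mathbf{N}}_{\overline{p}}^{*}))\cong\mathbb{H}^{*}(X;\widetilde{\mathbf{N}}_{\overline{p}}^{*})$, I would simply invoke \propref{SoftFlatTW}: the complex of sheaves $\widetilde{\mathbf{N}}_{\overline{p}}^{*}$ is a complex of soft sheaves, hence $\Gamma(X;-)$-acyclic (each $\widetilde{\mathbf{N}}_{\overline{p}}^{k}$ is soft, so it is a $\Gamma(X;-)$-acyclic resolution in each degree). Therefore the hypercohomology $\mathbb{H}^{*}(X;\widetilde{\mathbf{N}}_{\overline{p}}^{*})$, computed via a Cartan–Eilenberg/injective resolution, agrees with the cohomology of the complex of global sections $\Gamma(X;\widetilde{\mathbf{N}}_{\overline{p}}^{*})$. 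This is the standard fact that a bounded-below complex of $\Gamma$-acyclic sheaves computes its own hypercohomology; see \cite[V.1]{Bor} or \cite[2.7.7]{MR2276609}.

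For the first isomorphism, $\IH_{\overline{p}}^{*}(X;R)\cong H^{*}(\Gamma(X;\widetilde{\mathbf{N}}_{\overline{p}}^{*}))$, the point is that passing to the sheafification does not change cohomology, i.e. the natural map $\widetilde{N}_{\overline{p}}^{*}(X;R)\to\Gamma(X;\widetilde{\mathbf{N}}_{\overline{p}}^{*})=\colim_{\mathcal{U}}\widetilde{N}_{\overline{p}}^{*,\mathcal{U}}(X;R)$ is a quasi-isomorphism. The key input is the Mayer–Vietoris property of blown-up cochains (\cite[Theorem C]{CST4}), which implies that for every open covering $\mathcal{U}$ the restriction $\widetilde{N}_{\overline{p}}^{*}(X;R)\to\widetilde{N}_{\overline{p}}^{*,\mathcal{U}}(X;R)$ is a quasi-isomorphism — this is exactly the locality statement, that small cochains compute the same cohomology (cf.\ the analogous classical statement for small singular cochains). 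Since filtered direct limits are exact and preserve quasi-isomorphisms, passing to the colimit over $\Cov(X)$ preserves this, giving $\IH_{\overline{p}}^{*}(X;R)=H^{*}(\widetilde{N}_{\overline{p}}^{*}(X;R))\cong H^{*}(\colim_{\mathcal{U}}\widetilde{N}_{\overline{p}}^{*,\mathcal{U}}(X;R))=H^{*}(\Gamma(X;\widetilde{\mathbf{N}}_{\overline{p}}^{*}))$.

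The main obstacle is the first isomorphism, and more precisely the step asserting that small $\mathcal{U}$-cochains compute blown-up intersection cohomology. This does not follow from a single application of Mayer–Vietoris; one needs the barycentric-subdivision-type argument establishing that for any open cover the inclusion of $\mathcal{U}$-small chains is a quasi-isomorphism, which in the intersection-theoretic setting requires care with the filtration and with keeping simplices regular and $\overline{p}$-allowable under subdivision. Fortunately this is precisely the content of \cite[Definition 9.6 and the surrounding results, Theorem C]{CST4}, so in the write-up I would cite it rather than reprove it, and the remaining bookkeeping (exactness of filtered colimits, softness $\Rightarrow$ $\Gamma$-acyclicity) is routine.
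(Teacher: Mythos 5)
Your proposal is correct and follows essentially the same route as the paper: the first isomorphism comes from the theorem of small $\mathcal{U}$-cochains in \cite{CST4} (the paper cites Corollary~9.8 there rather than Theorem~C, but this is the same circle of results) together with exactness of the filtered colimit defining $\Gamma(X;\widetilde{\mathbf{N}}_{\overline{p}}^{*})$, and the second is the standard consequence of softness established in Proposition~\ref{SoftFlatTW}.
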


\begin{proof}
The theorem of small $\mathcal{U}$-cochains (see \cite[Corollary 9.8]{CST4}) implies that for any
open covering of $X$ the restriction map $\widetilde{N}_{\overline{p}}^{*}(X;R)\rightarrow\widetilde{N}_{\overline{p}}^{*,\mathcal{U}}(X;R)$
is a quasi-isomorphism. This gives us the first isomorphism, the second
being a direct consequence of the softness of the sheaf $\widetilde{\mathbf{N}}_{\overline{p}}^{*}$.
\end{proof}

%%%%%%%%%%%%%%%%%%%%%%
\subsection{Blown-up  intersection cochains with compact supports}
\begin{definition}
Let $(X,\overline{p})$  be a perverse space. 
The  \textbf{blown-up  complex of cochains with compact supports}
is the direct limit,
\[
\widetilde{N}_{ \overline{p},c}^{*}(X)=\colim_{K}\widetilde{N}_{\overline{p}}^{*}(X,X\backslash K;R),
\]
where the limit is taken over all compact subsets $K\subset X$. Its cohomology is denoted by
%:
$\IH^*_{\overline{p},c}(X;R)$.
\end{definition}

\begin{proposition}
Let $(X,\overline{p})$  be a perverse space. We  have the following isomorphisms
\[
{ \IH_{\overline{p},c}  ^{*}(X;R)\cong H^{*}(\Gamma_c(X;\widetilde{\mathbf{N}}_{\overline{p}}^{*}))\cong\mathbb{H}_c^{*}(X;\widetilde{\mathbf{N}}_{\overline{p}}^{*}).}
\]
\end{proposition}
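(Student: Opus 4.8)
The plan is to mirror the proof given for the non-compact version, replacing ordinary sections by compactly-supported sections throughout. First I would observe that the first isomorphism $\IH^*_{\overline p,c}(X;R)\cong H^*(\Gamma_c(X;\widetilde{\mathbf N}^*_{\overline p}))$ is essentially a matter of unwinding the definitions: by \defref{def:covV}, the stalk-level (and hence sections-level) data of $\widetilde{\mathbf N}^*_{\overline p}$ is a direct limit over coverings of small $\mathcal U$-cochains, and $\Gamma_c$ is itself a direct limit over compact sets $K$ of sections supported in $K$. Using the theorem of small $\mathcal U$-cochains (\cite[Corollary 9.8]{CST4}), applied relatively to the pairs $(X,X\setminus K)$, each restriction $\widetilde N^*_{\overline p}(X,X\setminus K;R)\to \widetilde N^{*,\mathcal U}_{\overline p}(X,X\setminus K;R)$ is a quasi-isomorphism; since direct limits are exact, passing to the colimit over $K$ (and over $\mathcal U$) identifies $\colim_K \widetilde N^*_{\overline p}(X,X\setminus K;R)$ with $H^*(\Gamma_c(X;\widetilde{\mathbf N}^*_{\overline p}))$, which is exactly $\IH^*_{\overline p,c}(X;R)$ by definition. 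One should check that the colimit over $K$ commutes with taking cohomology, which again is the exactness of filtered colimits of $R$-modules.

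Next I would establish the second isomorphism $H^*(\Gamma_c(X;\widetilde{\mathbf N}^*_{\overline p}))\cong \mathbb H^*_c(X;\widetilde{\mathbf N}^*_{\overline p})$. This is a direct consequence of \propref{SoftFlatTW}: the complex of sheaves $\widetilde{\mathbf N}^*_{\overline p}$ is a (bounded below) complex of soft sheaves, and soft sheaves are acyclic for the compactly-supported global sections functor $\Gamma_c$ (on a locally compact, $\sigma$-compact, finite cohomological dimension base — which a pseudomanifold, or more generally the relevant filtered spaces here, satisfies; alternatively one invokes the general fact that soft sheaves are $\Gamma_c$-acyclic on paracompact spaces, as in \cite[V.5]{Bor} or \cite{MR2276609}). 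Hence $\Gamma_c$ applied termwise computes the compactly-supported hypercohomology, i.e. the natural map from $H^*(\Gamma_c(X;\widetilde{\mathbf N}^*_{\overline p}))$ to $\mathbb H^*_c(X;\widetilde{\mathbf N}^*_{\overline p})$ is an isomorphism.

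The main obstacle, as with the non-compact statement, is the careful handling of the two nested direct limits — over coverings $\mathcal U$ and over compact exhausting sets $K$ — and making sure the small-cochains quasi-isomorphism is available in the relative setting $(X, X\setminus K)$; this is where one genuinely uses \cite[Corollary 9.8]{CST4} rather than just the absolute Mayer–Vietoris statement. A secondary point requiring attention is the hypothesis on $X$ ensuring softness implies $\Gamma_c$-acyclicity: for a paracompact space this is standard, and one may simply cite the relevant softness/acyclicity statement already used implicitly in \propref{SoftFlatTW} and its predecessors. Once these two technical points are granted, the proof is the evident compactly-supported transcription of the previous proposition's proof, and I would present it in a single short paragraph referencing \cite[Corollary 9.8]{CST4} for the first isomorphism and \propref{SoftFlatTW} for the second.
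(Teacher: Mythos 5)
Your proposal follows essentially the same route as the paper: the second isomorphism from softness of $\widetilde{\mathbf{N}}^*_{\overline{p}}$, and the first from identifying $\Gamma_c(X;\widetilde{\mathbf{N}}^*_{\overline{p}})$ with a colimit of $\mathcal{U}$-small compactly supported cochains and invoking a small-cochains quasi-isomorphism. The only difference is bookkeeping: where you propose to deduce the relative/compactly supported small-cochains statement from \cite[Corollary 9.8]{CST4}, the paper simply cites the already-proved compact-supports version \cite[Proposition 2.5]{CST2}.
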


\begin{proof}
The second isomorphism follows from the softness of the sheaf $\widetilde{\mathbf{N}}^*_{\overline{p}}$.
In order to get the first one we use the fact that
$$\Gamma_c(X;\widetilde{\mathbf{N}}^*_{\overline{p}})=\colim_{\mathcal{U}\in \Cov(X)} 
\widetilde{N}_{ \overline{p},c}^{*,\mathcal{U}}(X;R)$$
where $\widetilde{N}_{ \overline{p},c}^{*,\mathcal{U}}(X;R)$ is the cochain complex of $\mathcal{U}$-small  blown-up  cochains with compact supports. And to conclude we use the theorem of small $\mathcal{U}$-cochains with compact supports proved in \cite[Proposition 2.5]{CST2}.  
\end{proof}

%%%%%%%%%%%%%%%%%%%%%%%
\subsection{Deligne's sheaves and  blown-up  cochains}

Intersection homology was first  introduced in \cite{GM1} for PL-pseudomanifolds. In \cite{GM2},
Goresky and MacPherson replace the simplicial point of view of \cite{GM1} by a sheaf construction due to 
Deligne \cite{Del}. 
Topological pseudomanifolds and GM perversities
are the paradigm of \cite{GM2}, the main tool being an axiomatic characterization of the Deligne sheaf in the derived category $\mathcal{D}(X)$
of complexes of sheaves on $X$. 
If one replaces GM perversities by the perversities of \defref{def:perversity},
an adaptation $\mathbf{Q}^*_{\overline{p}}$ of the Deligne's sheaf  
 is done by G. Friedman in \cite{FR2}, thanks to
 a generalized truncation functor for sheaves.
 
 \medskip
 Let $(X,\ov{p})$ be a perverse pseudomanifold.
We show that the sheaf $\widetilde{\mathbf{N}}_{\overline{p}}^{*}$
is isomorphic to $\mathbf{Q}^*_{\overline{p}}$ in $\mathcal{D}(X)$.
Let us first recall the Friedman version of the Deligne's axioms for perverse spaces (see \cite[Section 3]{FR2}).
\begin{definition}\label{def:deligne}
Let  $\mathbf{F}^{*}$ be a sheaf complex
on $X$. We denote by $\mathbf{F}_{k}^{*}$ the restriction of $\mathbf{F}^{*}$ to $X\backslash X_{n-k}$.
We say that $\mathbf{F}^{*}$ satisfies the axioms $(AX)_{\overline{p}}$ if:   

\begin{itemize}
\item[(1)] $\mathbf{F}^{*}$ is bounded, $\mathbf{F}^{i}=0$ for $i<0$ and
the restriction $\mathbf{F}_{1}^{*}$ to the regular strata is
quasi-isomorphic to the ordinary singular cochain complex.
\item[(2)] For any stratum $S$ and any $x\in S$, the cohomology sheaf $\mathbf{H}^{i}(\mathbf{F}^{*})_{x}$  vanishes 
if $i>\overline{p}(S)$.

\item[(3)] For any stratum  $S \subset X_{n-k}$ and any $x\in S$,  the attachment
map, $\alpha_{k}\colon \mathbf{F}_{k+1}^{*}\rightarrow Ri_{k}^{*}\mathbf{F}_{k}^{*}$,
induced by the canonical inclusion $X\backslash X_{n-k} \hookrightarrow  X\backslash X_{n-k-1}$
is a quasi-isomorphism at $x$ up to $\overline{p}(S)$. 
\end{itemize}

If $\mathbf{F}^{*}$ is soft, from \cite[Remark 2.3]{Pol}, we may replace the axiom (3)
by the following one.

\begin{itemize}
\item[(3 bis)]  For any  $k\in\{2,\dots,n\}$, stratum $S \subset X_{n-k}\backslash X_{n-k-1}$, $x\in S$ and $j\leq \ov{p}(S)$,
the restriction map induces an isomorphism, 
\[
\colim_{V_{x}}H^{j}(\Gamma(V_{x};\mathbf{F}^{*}))\rightarrow 
\colim_{V_{x}}H^{j}(\Gamma(V_{x}\backslash X_{n-k};\mathbf{F}^{*})),
\]
where $V_{x}$ varies into a cofinal
family of neighborhoods of $x$ in $X\backslash X_{n-k-1}$.
\end{itemize}
\end{definition}

\begin{proposition}{\cite[Proposition 3.8]{FR2}}\label{prop:superdeligne}
Let $(X,\ov{p})$ be a perverse pseudomanifold. Then,
any sheaf complex satisfying the axioms $(AX)_{\ov{p}}$
is isomorphic to $\mathbf{Q}^*_{\overline{p}}$ in $\mathcal{D}(X)$.
\end{proposition}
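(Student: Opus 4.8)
The plan is to prove this characterization by induction along the codimension filtration, following the classical Deligne--Goresky--MacPherson argument in the stratumwise form adapted by Friedman. Write $U_{k}=X\setminus X_{n-k}$ for $1\leq k\leq n+1$ (with the convention $X_{-1}=\emptyset$, so that $U_{1}$ is the regular part and $U_{n+1}=X$), and let $i_{k}\colon U_{k}\hookrightarrow U_{k+1}$ be the open inclusions; passing from $U_{k}$ to $U_{k+1}$ adjoins the codimension-$k$ strata $X_{n-k}\setminus X_{n-k-1}$. Recall that $\mathbf{Q}^{*}_{\overline{p}}$ is built inductively by taking $\mathbf{Q}^{*}_{\overline{p}}|_{U_{1}}$ to be the singular cochain complex on the regular part and
\[
\mathbf{Q}^{*}_{\overline{p}}|_{U_{k+1}}=\tau_{\leq\overline{p}}\,R(i_{k})_{*}\bigl(\mathbf{Q}^{*}_{\overline{p}}|_{U_{k}}\bigr),
\]
where $\tau_{\leq\overline{p}}$ is Friedman's generalized truncation, which truncates at level $\overline{p}(S)$ along each new stratum $S$ and acts as the identity over $U_{k}$. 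The inductive claim I would establish is that for each $k$ there is an isomorphism $\phi_{k}\colon\mathbf{Q}^{*}_{\overline{p}}|_{U_{k}}\xrightarrow{\ \sim\ }\mathbf{F}^{*}|_{U_{k}}$ in $\mathcal{D}(U_{k})$, and that these are compatible with restriction along the $i_{k}$.

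For the base case $k=1$, axiom (1) furnishes a quasi-isomorphism between $\mathbf{F}^{*}|_{U_{1}}$ and the ordinary singular cochain complex, which is $\mathbf{Q}^{*}_{\overline{p}}|_{U_{1}}$ by construction; this is $\phi_{1}$. For the inductive step, assume $\phi_{k}$ is given. Applying $R(i_{k})_{*}$ yields an isomorphism $R(i_{k})_{*}\phi_{k}$ in $\mathcal{D}(U_{k+1})$. Axiom (3) says that the attachment map $\alpha_{k}\colon\mathbf{F}^{*}|_{U_{k+1}}\to R(i_{k})_{*}(\mathbf{F}^{*}|_{U_{k}})$ (the adjunction unit into the derived direct image) is a quasi-isomorphism up to $\overline{p}(S)$ on each new stratum $S$, hence becomes an isomorphism after applying $\tau_{\leq\overline{p}}$; meanwhile axiom (2) forces $\mathbf{H}^{i}(\mathbf{F}^{*})_{x}=0$ for $i>\overline{p}(S)$ at such points, so the natural truncation map $\tau_{\leq\overline{p}}\mathbf{F}^{*}|_{U_{k+1}}\to\mathbf{F}^{*}|_{U_{k+1}}$ is an isomorphism as well. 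Combining these into the zigzag
\[
\mathbf{Q}^{*}_{\overline{p}}|_{U_{k+1}}
\xrightarrow{\ \tau_{\leq\overline{p}}R(i_{k})_{*}\phi_{k}\ }
\tau_{\leq\overline{p}}R(i_{k})_{*}(\mathbf{F}^{*}|_{U_{k}})
\xleftarrow{\ \tau_{\leq\overline{p}}\alpha_{k}\ }
\tau_{\leq\overline{p}}\mathbf{F}^{*}|_{U_{k+1}}
\longrightarrow
\mathbf{F}^{*}|_{U_{k+1}},
\]
in which all three maps are isomorphisms, defines the required isomorphism $\phi_{k+1}$ in $\mathcal{D}(U_{k+1})$. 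Since $\tau_{\leq\overline{p}}$ is the identity over $U_{k}$ and $i_{k}^{*}R(i_{k})_{*}\cong\id$, restricting $\phi_{k+1}$ to $U_{k}$ recovers $\phi_{k}$, establishing compatibility. Taking $k=n+1$ gives $\mathbf{Q}^{*}_{\overline{p}}\cong\mathbf{F}^{*}$ in $\mathcal{D}(X)$.

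The main obstacle is the careful handling of the generalized truncation functor $\tau_{\leq\overline{p}}$, and in particular establishing the universal property that makes the displayed morphisms genuinely exist and the inverses composable in the derived category. Concretely one needs that, for a complex $\mathbf{A}^{*}$ already concentrated stratumwise in degrees $\leq\overline{p}(S)$ (as $\mathbf{F}^{*}|_{U_{k+1}}$ is, by axiom (2)), the natural map
\[
\mathrm{Hom}_{\mathcal{D}(U_{k+1})}(\mathbf{A}^{*},\tau_{\leq\overline{p}}\mathbf{B}^{*})\to\mathrm{Hom}_{\mathcal{D}(U_{k+1})}(\mathbf{A}^{*},\mathbf{B}^{*})
\]
is a bijection; this is the stratified analogue of the truncation adjunction and is exactly what guarantees both that $\tau_{\leq\overline{p}}\alpha_{k}$ is invertible and that the extension $\phi_{k+1}$ is well defined. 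This property follows from the recollement of t-structures across the open--closed decomposition $U_{k}\hookrightarrow U_{k+1}\hookleftarrow(X_{n-k}\setminus X_{n-k-1})$, which is precisely how Friedman's truncation is set up; granting it, the uniqueness of the extension at each stage forces the induction through without further choices, and the compatibility of the $\phi_{k}$ assembles them into the single isomorphism over $X$.
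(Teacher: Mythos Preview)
The paper does not give its own proof of this proposition: it is quoted verbatim from Friedman \cite[Proposition 3.8]{FR2} and used as a black box. Your sketch is precisely the standard Deligne--Goresky--MacPherson inductive argument in the stratumwise form that Friedman carries out in that reference, so there is nothing to compare against in the present paper; your outline is correct and matches the cited source.
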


In \cite{FR1},  G. Friedman extends such unicity theorem  to the  setting of the  
homotopically stratified spaces of F. Quinn, \cite{Qui}. We do not consider these spaces in this work.

\medskip
Our first result relates Deligne's sheaf to the blown-up intersection cohomology.
 
\begin{theorem}\label{DeligneTW}
Let $(X,\overline{p})$ be a perverse pseudomanifold. Then the sheaf complex $\widetilde{\mathbf{N}}_{\overline{p}}^{*}$
is isomorphic to $\mathbf{Q}^*_{\overline{p}}$ in the derived category $\mathcal{D}(X)$ 
of sheaves on $X$. Hence we have the following isomorphism:
\[
{\IH_{\overline{p}}^{*}(X;R)\cong\mathbb{H}^{*}(X;\mathbf{Q}_{\overline{p}}).}
\]
Moreover, this  isomorphism 
is compatible with the products. 
\end{theorem}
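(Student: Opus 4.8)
The strategy is to verify that the sheaf complex $\widetilde{\mathbf{N}}_{\overline{p}}^{*}$ satisfies the Deligne axioms $(AX)_{\overline{p}}$ of \defref{def:deligne}, and then to invoke \propref{prop:superdeligne} to get the isomorphism in $\mathcal{D}(X)$; the hypercohomology isomorphism then follows from the preceding propositions identifying $\IH^*_{\overline{p}}(X;R)$ with $\mathbb{H}^*(X;\widetilde{\mathbf{N}}_{\overline{p}}^{*})$, which uses softness (\propref{SoftFlatTW}). Since $\widetilde{\mathbf{N}}_{\overline{p}}^{*}$ is soft, I will use the more convenient form \textrm{(3 bis)} of the third axiom, which is stated directly in terms of sections over shrinking neighborhoods and so can be checked by the already-established local computations of blown-up intersection cohomology.

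First I would check axiom (1): boundedness and vanishing in negative degrees are immediate from the definition of $\widetilde{N}^*_{\overline{p}}$ (it is a subcomplex of a tensor product of cochain complexes concentrated in nonnegative degrees, and the perverse-degree constraint bounds it above since perversities are bounded on the finitely many codimensions); over the regular part $X\backslash X_{n-1}$ there are no singular strata, the allowability conditions are vacuous, and the blown-up cochain complex reduces (up to quasi-isomorphism) to the ordinary singular cochain complex — this is essentially the content of the $R$-linear operator $N^*\to\widetilde N^*_{\overline 0}$ from \cite[Lemma 10.2]{CST4} together with the cone computation, and should be cited rather than reproven. Next, axiom (2): for $x$ in a stratum $S$ and $V_x$ a cofinal system of distinguished neighborhoods $U\times\rc L$, stratified homotopy invariance \eqref{pro} together with the cone formula \eqref{ConeTW} gives $\IH^k_{\overline{p}}(V_x)\cong\IH^k_{\overline{p}}(\rc L)$, which vanishes for $k>\overline{p}(\tv)=\overline{p}(S)$; passing to the colimit over $V_x$ and using that the stalk of the cohomology sheaf is computed by this colimit gives the vanishing $\mathbf{H}^i(\widetilde{\mathbf{N}}_{\overline{p}}^{*})_x=0$ for $i>\overline{p}(S)$.

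For axiom \textrm{(3 bis)}, on a distinguished neighborhood $V_x=U\times\rc L$ with $U\subset X\backslash X_{n-k-1}$ open contractible, the restriction map compares $H^j(\Gamma(V_x;\widetilde{\mathbf{N}}_{\overline{p}}^{*}))$ with $H^j(\Gamma(V_x\backslash X_{n-k};\widetilde{\mathbf{N}}_{\overline{p}}^{*}))$; by the identification of sections' cohomology with blown-up intersection cohomology, these are $\IH^j_{\overline{p}}(V_x)$ and $\IH^j_{\overline{p}}(V_x\setminus S)\cong\IH^j_{\overline{p}}(U\times(\rc L\setminus\{\tv\}))\cong\IH^j_{\overline{p}}(U\times L\times\,]0,1[\,)$, and the cone formula \eqref{ConeTW} says precisely that the inclusion $L\times\,]0,1[\,\hookrightarrow\rc L$ induces an isomorphism in degrees $j\leq\overline{p}(\tv)=\overline{p}(S)$; combining with homotopy invariance across the $U$ factor finishes this axiom. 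Finally, for the multiplicative compatibility: the family $\{\widetilde{\mathbf{N}}_{\overline{p}}^{*}\}_{\overline{p}}$ is a perverse differential graded algebra (noted after \defref{def:covV}), so the cup product descends to a product on hypercohomology; on the other hand $\mathbf{Q}^*_{\overline{p}}$ carries the Goresky–MacPherson intersection product, and since both are characterized in $\mathcal{D}(X)$ and the isomorphism is realized by an actual morphism of sheaf complexes respecting restrictions, one checks the products agree on the regular part (where both reduce to the ordinary cup product) and then extends by the uniqueness in $\mathcal{D}(X)$. \textbf{The main obstacle} I anticipate is making the last point rigorous: one must exhibit a genuine map of complexes of sheaves (not merely a zig-zag in the derived category) inducing the quasi-isomorphism and commuting with the products up to coherent homotopy, so that the comparison of products is not circular; this is where the concrete chain-level structure of $\widetilde N^*_{\overline p}$, as opposed to the abstract Deligne construction, does the real work, and where I would spend most of the care.
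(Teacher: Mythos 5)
Your proposal follows essentially the same route as the paper: verify the axioms $(AX)_{\overline{p}}$ via softness, stratified homotopy invariance \eqref{pro} and the cone formula \eqref{ConeTW}, then invoke \propref{prop:superdeligne}. The ``main obstacle'' you flag for the multiplicative statement is not actually an issue: the paper disposes of it by citing Friedman's uniqueness theorem for pairings (\cite[Theorem 1.4]{FR2}), which says a product defined on the regular part extends uniquely to the Deligne sheaves in $\mathcal{D}(X)$, so it suffices to observe that the blown-up cup product restricts to the classical one there --- no chain-level map coherently commuting with products is needed.
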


\begin{proof}
With \propref{prop:superdeligne}, it is sufficient to prove  that the sheaf $\widetilde{\mathbf{N}}_{\overline{p}}^{*}$
satisfies the axioms $(AX)_{\overline{p}}$.

 Property  (1) follows from the fact that the restriction
of the sheaf $\widetilde{\mathbf{N}}_{\overline{p}}^{*}$ to the
regular strata is the sheafification of the singular cochains.

The fact that $\widetilde{\mathbf{N}}_{\overline{p}}^{*}$ satisfies
(2) and  (3 bis)  follows from
the softness of this sheaf  and the properties of  blown-up  intersection
cohomology recalled in (\ref{ConeTW}). Let us begin with axiom $(2)$. 
Let   $S$ be a stratum of $X_{n-k}$ and $x\in S$. 
 We choose a cofinal family of local charts
 such that $\varphi\colon V_{x} \xrightarrow{\cong}\mathbb{R}^{n-k}\times \rc L$ where
$L$ is the link of $x$.  
By softness of $\widetilde{\mathbf{N}}_{\overline{p}}^{*}$
we have the following isomorphisms:
\[
\mathbf{H}^{i}(\widetilde{\mathbf{N}}_{\overline{p}}^{*})_{x}\cong \colim_{V_{x}}H^{i}(\Gamma(V_{x},\widetilde{\mathbf{N}}_{\overline{p}}^{*}))\cong \colim_{V_{x}}\IH_{\overline{p}}^{i}(V_{x},R).
\]
Following \eqref{pro} we get that
\[
\mathbf{H}^{i}(\widetilde{\mathbf{N}}_{\overline{p}}^{*})_{x}
\cong 
\crH_{\overline{p}}^{i}(\mathbb{R}^{n-k}\times \rc L;R)
\cong 
\crH_{\overline{p}}^{i}(\rc L;R).
\]
Then $(2)$ follows from the fact that $\crH_{\overline{p}}^{i}(\rc L;R)=0$
whenever $i>\overline{p}(S)$, cf. \eqref{ConeTW}. In order to prove that
$\widetilde{\mathbf{N}}_{\overline{p}}^{*}$ satisfies  (3 bis), we use
 the same cofinal family of local charts. 
 We have to analyze the attaching map 
 $
\crH_{\overline{p}}^{i}(V_{x};R)
\rightarrow 
\crH_{\overline{p}}^{i}(V_{x}\backslash X_{n-k};R)$. 
Using $\varphi$, the attaching map becomes 
$
\crH_{\overline{p}}^{i}(\mathbb{R}^{n-k}\times \rc L;R)  
\rightarrow  
\crH_{\overline{p}}^{i}(\mathbb{R}^{n-k}\times(\rc L\backslash \{x\});R).
$
Now, the result is a consequence of \eqref{pro} and \eqref{ConeTW}.

\medskip
The restriction of the product of  blown-up  cochains (see \cite[Proposition 4.2]{CST4}) on the regular part is the 
classical cup product, which is a resolution of the canonical product 
$$\mathbf{\underline{R}}\otimes \mathbf{\underline{R}} \rightarrow \mathbf{\underline{R}}.$$
The result follows from a direct application of \cite[Theorem 1.4]{FR2}, which asserts that this canonical product defined on the regular part of $X$ can be extended uniquely to the Deligne's sheaves. 
\end{proof}

Let us focus on the fact that the product on Deligne sheaves is only defined at the level of the derived category 
$\mathcal{D}(X)$, whereas the blown-up cochains give  a sheaf of perverse algebras. 
This is the starting point of a rational homotopy theoretic treatment of intersection cohomology 
as done in  \cite{CST1}.

%%%%%%%%%%%%%%%%%%%%%%%
\section{Intersection chains and cochains}\label{sec:BMchains}

\begin{quote}
In this section we introduce the intersection chain complex  and its dual, the intersection cochain complex,
together with a Borel-Moore intersection chain complex. 
In \propref{prop:spanier}, we extend to pseudomanifolds  a theorem of Spanier (\cite{Span}) 
originally stated for singular chains and cochains on manifolds.
\end{quote}

%%%%%%%%%%%%%%%%%%%%%%%%%%
\subsection{Borel-Moore intersection chains}

 We denote by
$C^{\BM}_{*}(X;R)$ the complex of formal sums $\xi=\sum_j\lambda_j\sigma_j$, with $\sigma_j$ filtered, 
$\lambda_j\in R$ and such that any $x\in X$ possesses  a neighboorhood  for which the restriction of $\xi$ is finite.
 We say that $\xi$ \textbf{is locally finite or a Borel-Moore chain.} 
 If we restrict to finite linear combinations, we recover the \textbf{filtered chains} and denote by
$C_{*}(X;R)$ the associated complex.

\begin{definition}\label{def:chainBM}
Consider a perverse space  $(X,\ov p)$  and 
 a filtered  simplex
$\sigma\colon\Delta=\Delta_{0}\ast \dots\ast\Delta_{n} \to X$.
\begin{enumerate}[{\rm (i)}]
\item The \textbf{perverse degree of } $\sigma$ is  the $(n+1)$-tuple,
$\|\sigma\|=(\|\sigma\|_0,\dots,\|\sigma\|_n)$,  
where
 $\|\sigma\|_{i}=\dim (\Delta_{0}\ast\dots\ast\Delta_{n-i})$, 
with the convention $\dim \emptyset=-\infty$.
 \item Given a stratum $S$ of  $X$, the \textbf{perverse degree of $\sigma$ along $S$} is defined by \ 
 $$\|\sigma\|_{S}=\left\{
 \begin{array}{cl}
 -\infty,&\text{if } S\cap \im \sigma=\emptyset,\\
 \|\sigma\|_{\codim S}&\text{if } S\cap \im \sigma\ne\emptyset.\\
  \end{array}\right.$$
  \item The filtered singular simplex $\sigma$  is  \textbf{$\ov{p}$-allowable} if
  $
  \|\sigma\|_{S}\leq \dim \Delta-\codim S+\ov{p}(S),
  $
   for any stratum $S$. 
   \item A (finite or locally finite) chain $c$ is 
   \textbf{$\ov{p}$-allowable} if it  is a linear combination of  $\ov{p}$-allowable simplices.
   The chain $c$ is a  \textbf{$\ov{p}$-intersection chain} if $c$ and 
   its boundary $\partial c$ are $\ov{p}$-allowable chains.
   The complexes of $\ov{p}$-intersection chains of $X$  with the differential $\partial$
are denoted by
$C^{\ov{p}}_* (X;R) $ or $C^{\BM,\ov{p}}_* (X;R) $. 
Their respective homology $ H^{\ov{p}}_* (X;R)$  
and
$ H^{\BM,\ov{p}}_* (X;R)$
are respectively the \textbf{$\ov{p}$-intersection homology} 
and \textbf{the Borel-Moore $\ov{p}$-intersection homology} of $X$. 
   \end{enumerate}
   \end{definition}

For a perversity $\ov{p}$ such that $\ov{p}\not\leq \ov{t}$, we may have $\ov{p}$-allowable simplices that
are not regular in the sense of \defref{def:filteredsimp}.
This failure has bad consequences on Poincar\'e duality (see \cite{CST2}).
To overcome this point, the tame intersection homology and cohomology has been introduced in
\cite{MR2210257} and \cite{CST4}. Let us remind it.

\begin{definition}
Given a regular simplex $\Delta = \Delta_0 * \dots *\Delta_n$ we denote by 
$\gd\Delta$ the regular part of the chain $\partial \Delta$.  That is
$\gd \Delta =\partial (\Delta_0 * \dots * \Delta_{n-1})* \Delta_n$, if $|\Delta_n| = 0 $, or $\gd \Delta = \partial \Delta$,
if $|\Delta_n|\geq 1$.
\end{definition}

\begin{definition}\label{tameNormHom}
Let  $(X,\ov p)$ be a perverse space.
 Given a regular simplex $\sigma \colon\Delta \to X$, we define the chain $\gd \sigma$ by $\sigma \circ \gd$.
 Notice that $\gd^2=0$.
 We denote by $\gC_{*} (X;R)$ 
 (resp. $\gC_{*}^{\BM} (X;R)$) the  complex of finite chains 
 (resp. locally finite chains) generated by the  regular simplices, endowed with the differential $\gd$.
 
A $\ov p$-allowable  filtered simplex $\sigma \colon \Delta \to X$  is  \textbf{$\ov{p}$-tame} if $\sigma$ is also a regular simplex. In the two cases, finite or locally finite, a chain $\xi$ is 
   \textbf{$\ov{p}$-tame} if it is a linear combination of  $\ov{p}$-tame simplices.
   A chain $\xi$ is a  \textbf{tame $\ov{p}$-intersection chain} if $\xi$ and $\gd \xi$ are $\ov{p}$-tame chains.

We write  $\gC^{\ov{p}}_* (X;R) \subset \gC_* (X;R)$ the complex of tame $\ov{p}$-intersection finite chains endowed with the differential  $\gd $.
Its homology  $\gH^{\ov{p}}_{*} (X;R)$  is the \textbf{tame $\ov{p}$-intersection homology.} 

In the same manner, we write $\mathfrak{C}_{*}^{\BM,\overline{p}}(X;R)$ for the complex of locally
finite tame $\overline{p}$-intersection chains. Its homology  $\mathfrak{H}_*^{\BM,\overline{p}}(X;R)$ 
is the  \textbf{Borel Moore tame $\ov{p}$-intersection homology}.
\end{definition}

Let us recall the main properties of tame $\ov{p}$-intersection homology (see \cite{CST3} or \cite{FriedmanBook}).
\begin{itemize}
\item It is natural with respect to stratum preserving maps, \cite[Proposition 7.6]{CST3}.
\item It satisfies
Mayer-Vietoris property, \cite[Proposition 7.10]{CST3}. This  comes from  the fact that the classical
subdivision operator $\Sd\colon C_{*}(X;R)\rightarrow C_{*}(X;R)$ can also
be defined on the complex $\mathfrak{C}_{*}^{\overline{p}}(X;R)$
and  gives a quasi-isomorphism. As a consequence if we consider
an open covering $\mathcal{U}$ of $X$ the inclusion of $\mathcal{U}$-small chains
\[
\mathfrak{C}_{*}^{\overline{p},\mathcal{U}}(X;R)\rightarrow\mathfrak{C}_{*}^{\overline{p}}(X;R)
\]
is a quasi-isomorphism, \cite[Corollaire 7.13]{CST3}.
\item We also have a stratified homotopy invariance, \cite[Proposition 7.7]{CST3}. 
\item Let $L$ be a compact filtered space, $\ov{p}$ a perversity on the open cone $\rc L$, of apex $\tv$.
By denoting $\ov{p}$ the perversity induced on $L$, we have
 (\cite[Proposition 7.9]{CST3}): 
\begin{equation}\label{equa:conetame}
\mathfrak{H}_{k}^{\overline{p}}(\overset{\circ}{c}L;R)=
\begin{cases}
\mathfrak{H}_{k}^{\overline{p}}(L;R) & \text{if }\,k\leq D\overline{p}(\tv),\\
0 & \text{if }\, k> D{p}(\tv).
\end{cases}
\end{equation}
\item If $U$ is an open subset of a perverse space $(X,\ov{p})$ with the induced perversity, we define
relative tame $\ov{p}$-intersection chains as the quotient
$\mathfrak{C}_{*}^{\overline{p}}(X,U;R)=
\mathfrak{C}_{*}^{\overline{p}}(X;R)/\mathfrak{C}_{*}^{\overline{p}}(U;R)$. Its homology denoted $\mathfrak H_{i}^{\ov{p}}(X,U ;R)$ fits into an exact sequence,
\begin{equation}\label{suitexacteadroite}
\ldots \to \mathfrak  H_{i}^{\ov{p}}(U ;R) \to  \mathfrak H_{i}^{\ov{p}}(X ;R)  \to \mathfrak H_{i}^{\ov{p}}(X,U ;R)  \to \mathfrak  H_{i-1}^{\ov{p}}(U ;R) \to \ldots
\end{equation}
Also if $K\subset U$ is compact,
we have by excision
$\mathfrak H_{i}^{\ov{p}}(X,X\backslash K ;R)=
\mathfrak H_{i}^{\ov{p}}(U,U\backslash K ;R)$.
\end{itemize}

The next result connects tame $\overline{p}$-intersection chains 
to Borel-Moore ones. 

\begin{proposition}\label{BMCompact}
Let $(X,\overline{p})$ be a perverse space.  Suppose that $X$ is locally compact, metrizable and separable.  
The  complex 
of locally finite tame $\ov{p}$-intersection chains
is isomorphic to the inverse limit of complexes,
\begin{equation}\label{descom}
\xymatrix{
\mathfrak{C}_{*}^{\BM,\overline{p}}(X;R)
\ar[r]^-{\cong}
&
{\varprojlim}_{K\subset X}\mathfrak{C}_{*}^{\overline{p}}(X,X\backslash K;R),
}
\end{equation}
where the limit is taken over all compact subsets of $X$.
\end{proposition}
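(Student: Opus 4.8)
The statement is essentially a reformulation of the definition of locally finite chains in terms of an inverse limit, so the proof should be an explicit construction of the isomorphism together with a verification that it is compatible with the differential $\gd$. First I would construct the map from left to right. Given a locally finite tame $\ov p$-intersection chain $\xi \in \mathfrak{C}_{*}^{\BM,\overline{p}}(X;R)$ and a compact subset $K \subset X$, the image $\xi|_K$ of $\xi$ in $\mathfrak{C}_{*}^{\overline{p}}(X,X\backslash K;R) = \mathfrak{C}_{*}^{\overline{p}}(X;R)/\mathfrak{C}_{*}^{\overline{p}}(X\backslash K;R)$ is obtained by discarding from $\xi$ all simplices whose image lies entirely in $X\backslash K$; what remains is a finite chain, because local finiteness of $\xi$ combined with the compactness of $K$ forces only finitely many simplices of $\xi$ to meet $K$ (cover $K$ by finitely many of the neighbourhoods on which $\xi$ is finite). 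One must check that $\xi|_K$ is still tame and $\ov p$-allowable: this is immediate, since it is a subsum of $\xi$ and these conditions are inherited by subsums. These maps are clearly compatible with the restriction maps $\mathfrak{C}_{*}^{\overline{p}}(X,X\backslash K';R) \to \mathfrak{C}_{*}^{\overline{p}}(X,X\backslash K;R)$ for $K \subset K'$, so they assemble into a chain map $\Psi\colon \mathfrak{C}_{*}^{\BM,\overline{p}}(X;R) \to {\varprojlim}_{K}\mathfrak{C}_{*}^{\overline{p}}(X,X\backslash K;R)$; compatibility with $\gd$ follows because passing to a subsum of simplices commutes with applying $\gd$ simplexwise.

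Next I would construct the inverse. An element of the inverse limit is a compatible family $(\xi_K)_K$ with $\xi_K \in \mathfrak{C}_{*}^{\overline{p}}(X,X\backslash K;R)$; choosing a representative finite chain for each $\xi_K$, compatibility means that for $K \subset K'$ the chains agree after deleting simplices missing $K$. Here I would use that $X$ is locally compact, metrizable and separable to fix an exhaustion $K_1 \subset K_2 \subset \cdots$ of $X$ by compact sets with $K_m \subset \mathring{K}_{m+1}$ and $\bigcup_m K_m = X$; since the inverse system is cofinal in this exhaustion, the family is determined by the $\xi_{K_m}$. Gluing the representatives of $\xi_{K_m}$ along the exhaustion produces a well-defined formal sum $\xi = \sum_j \lambda_j \sigma_j$ (each simplex $\sigma_j$ with its coefficient is eventually stable in the system, so it appears consistently). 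This $\xi$ is locally finite: any $x$ has a compact neighbourhood $N$ contained in some $\mathring{K}_m$, and only the simplices meeting $K_m$, finitely many, can contribute near $x$ — the simplices appearing in $\xi$ but not in the representative of $\xi_{K_m}$ all lie in $X\backslash K_m$. Tameness and $\ov p$-allowability of $\xi$ pass from the finite pieces since every simplex of $\xi$ appears in some $\xi_{K_m}$. One checks that $\gd \xi$ is obtained analogously from the family $(\gd\xi_K)$, hence $\xi$ is a Borel-Moore tame $\ov p$-intersection chain, and that the two constructions are mutually inverse — restricting a glued chain to $K_m$ recovers $\xi_{K_m}$, and gluing the restrictions of a locally finite chain recovers it.

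\textbf{Main obstacle.} The genuinely delicate point is the well-definedness and consistency of the gluing in the reverse map: one must be careful that a representative chain for $\xi_K$ is only well-defined modulo $\mathfrak{C}_{*}^{\overline{p}}(X\backslash K;R)$, so the ``simplices of $\xi_K$'' that one glues are exactly those meeting $K$, and the compatibility condition in the inverse limit must be used precisely to guarantee that the coefficient attached to a given simplex $\sigma_j$ does not depend on which $K_m$ (containing a point of $\im\sigma_j$) one reads it off from. This is where local compactness, metrizability and separability enter — they guarantee a countable compact exhaustion with the interior-nesting property, which makes the cofinality argument and the ``eventually stable'' description of each simplex's coefficient rigorous; without such an exhaustion the inverse limit over all compacta could in principle fail to reconstruct a genuine locally finite chain. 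Everything else (tameness, $\ov p$-allowability, compatibility with $\gd$) is a routine inheritance of subsum-closed conditions and requires no real work.
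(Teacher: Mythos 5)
There is a genuine gap, and it sits exactly at the step you declare immediate. Your forward map sends $\xi$ to the subsum of those simplices of $\xi$ whose image meets $K$, and you assert this subsum is again a tame $\ov{p}$-intersection chain because ``these conditions are inherited by subsums.'' That is false for the boundary half of the condition: $\xi$ is a tame $\ov{p}$-intersection chain only if $\gd\xi$ is also $\ov{p}$-tame, and this is a condition about cancellations among the faces of the \emph{different} simplices of $\xi$. If $\sigma$ meets $K$ and $\sigma'$ does not, and they share a non-allowable face that cancels in $\gd\xi$, that face survives in $\gd$ of your subsum, which therefore does not lie in $\mathfrak{C}_{*}^{\ov{p}}(X;R)$ at all; since $\mathfrak{C}_{*}^{\ov{p}}(X,X\backslash K;R)$ is by definition the quotient $\mathfrak{C}_{*}^{\ov{p}}(X;R)/\mathfrak{C}_{*}^{\ov{p}}(X\backslash K;R)$, your subsum does not even represent an element of the target. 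The paper closes precisely this gap by invoking \cite[Proposition A.13]{CST1}: every $c\in\mathfrak{C}_{*}^{\BM,\ov{p}}(X;R)$ admits a locally finite decomposition $c=\sum_{k\in J}c_k$ in which each piece $c_k$ is itself a $\ov{p}$-intersection chain, obtained by grouping together all simplices sharing the same bad face so that the cancellations occur within each piece. The restriction $\rho_K$ is then defined by keeping whole pieces (those whose support meets $K$) rather than individual simplices, and one checks independence of the chosen decomposition. Without this, or an equivalent device, your map $\Psi$ is not defined.

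By contrast, your surjectivity argument is essentially the paper's: both use the countable compact exhaustion $K_0\subset K_1\subset\cdots$ supplied by the topological hypotheses, and your ``glue representatives, each coefficient is eventually stable'' is a reformulation of the paper's telescoping sum $c=\sum_{k}(c^{k+1}-c^{k})$, whose local finiteness follows from $\supp(c^{k+1}-c^{k})\cap K_{k'}=\emptyset$ for $k'\leq k$. The paper's version has the advantage that each summand $c^{k+1}-c^{k}$ is by construction a finite tame $\ov{p}$-intersection chain, so the locally finite sum is visibly one as well; your simplex-by-simplex gluing runs back into the same question of whether a locally finite collection of simplices with the right cancellation pattern assembles into an intersection chain. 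The misidentified ``main obstacle'' is symptomatic: the real delicacy here is not the inverse-limit bookkeeping but the fact that tame intersection chains are not closed under passing to subsums of simplices.
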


\begin{proof}
Let $c$ be a chain of $\mathfrak C^{\BM,\overline p}_*(X;R)$.
Following \cite[Proposition A.13]{CST1} we 
know that there exists a locally finite family $\{\sigma\}$ of $\ov p$-intersection simplices, a locally family $\{ \tau \}$ of $\ov p$-bad faces  and a decomposition 
$
c = \sum_{\sigma } r_\sigma \sigma + \sum_{\tau} c_\tau	,
$
where $r_\sigma \in R$ and  
$c_\tau$ is the $\ov p$-intersection chain putting together all the  simplices of $c$ having the same bad face $\tau$.
In particular, any $c \in \mathfrak C^{\BM,\overline p}_*(X;R)$ can be written as  a locally finite decomposition
\begin{equation}\label{desc}
c  = \sum_{k \in J}c_k,
\end{equation}
 where each $c_k$ is a $\ov p$-intersection chain.
Let $K$ be a  compact subset of $X$. We define $J_K = \{ k \in J \mid \supp(c_k) \cap K \ne \emptyset\}$
and a map
$$
\rho_K \colon \mathfrak C^{\BM,\overline p}_*(X;R) \to \mathfrak C^{\overline p}_*(X,X \backslash K;R)
$$
 { by  }
$$
\rho_K(c) = \class_K \left( \sum_{k \in J_K}  c_k\right).
$$
This is a finite sum  since $K$ is compact and $J$ is locally finite. Let us see that $\rho_K$ is well defined. 
If $c = \sum_{k \in J'} e_k$ is another decomposition of $c$ as in  \eqref{desc} then the chain
$$
\sum_{k \in J_K}  c_k - \sum_{k \in J'_K}  e_k = -\sum_{k \not\in J_K}  c_k + \sum_{k \not\in J'_K}  
e_k
$$
does not meet $K$ and therefore 
$
\class_K(\sum_{k \in J_K}  c_k )= {\class}_K ( \sum_{k \in J'_K}  e_k)
$.

Also, we observe that $\rho_K$ clearly commutes with differentials.
By taking the limit over all compact subsets of $X$, we get a chain morphism,
$$\rho\colon \mathfrak{C}_{*}^{\BM,\overline{p}}(X;R)\rightarrow
{\varprojlim}_{K\subset X}\mathfrak{C}_{*}^{\overline{p}}(X,X\backslash K;R).
$$

 If $\rho(c)=0$ then ${\class}_{K}(c)=0$ for any compact set $K$. This implies $c=0$ and therefore the
injectivity of the map $\rho$.

In order to prove the surjectivity, we notice first that the topological hypotheses imply 
(\cite[16C]{Wil}) the existence of an  increasing sequence of compacts
$
K_0 \subset K_1 \subset \cdots K_m \subset \cdots $ such that any compact $K \subset X$ is included in some
$K_m$. In particular, the family $(K_m)$ is cofinal  in the family of compact subsets of $X$ and $X=\bigcup_{m\in \mathbb{N}} K_m$.
Let us consider a collection of chains
\[
\gc =
\left\{\gc_m = \class_K (c^m) \right\}\in{\varprojlim}_{m}\mathfrak{C}_{*}^{\overline{p}}(X,X\backslash K_{m};R).
\]
For $m\geq 0$, the natural inclusion $\gc_{m+1} \mapsto \gc_m$ gives
$\rho_{K_m} (c^{m+1}) = \rho_{K_m}(c^m),
$
that is,
$
\supp(c^{m+1} - c^m) \cap  K_m = \emptyset.
$ 
So, we have
\begin{equation}\label{vacio}
\supp(c^{m+1} - c^m) \cap K_{m'} = \emptyset \hbox{ if } m' \leq m.
\end{equation}
Let us consider the chain
$$
c = \sum_{k\in \mathbb N} (c^{k+1} -c^k).
$$
Condition \eqref{vacio} gives $c \in \mathfrak C^{\BM,\overline p}_*(X;R)$. We end the proof with $\rho(c) = \gc$, that is, 
$$
\class_{K_m} \left( \sum_{k } (c^{k+1} -c^k) \right) = \class_{K_m} (c^m),
$$
for each $m\in \mathbb N$.
Since
$
J_{K_m} = \{ k \in \mathbb N \mid \supp(c^{k+1} - c^k) \cap K_m \ne \emptyset\} \subset \{0, \ldots, m-1\}$ (cf. \eqref{vacio}) then 
$$
\class_{K_m} \left( \sum_{k } (c^{k+1} -c^k) \right)
=
\class_{K_m} \left( \sum_{k < m} (c^{k+1} -c^k) \right) =\class_{K_m} (c^m).
$$
\end{proof}

Propositions \ref{BMtimesR} and \ref{BMCone} 
have been already established in the context of filtration depending perversities
in \cite{MR2276609}.

\begin{proposition}\label{BMtimesR}
Let $L$ be a compact filtered space, $a\in\N$
and $\ov{p}$ a perversity on the product $\R^m\times L$.
By still denoting $\ov{p}$ the perversity induced on $L$, we have
$$\mathfrak{H}_{k}^{\BM,\ov{p}}(\R^a\times L;R)=\mathfrak{H}^{\ov{p}}_{k-a}(L;R).$$
\end{proposition}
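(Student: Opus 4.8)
\textbf{Proof plan for \propref{BMtimesR}.}
The plan is to reduce the Borel--Moore statement to the already-known finite tame intersection homology of $L$ by combining \propref{BMCompact} with the stratified homotopy invariance of tame chains. First I would use \propref{BMCompact} to write
\[
\mathfrak{C}_{*}^{\BM,\ov{p}}(\R^a\times L;R)\cong
\varprojlim_{K}\mathfrak{C}_{*}^{\ov{p}}(\R^a\times L,(\R^a\times L)\backslash K;R).
\]
Using the cofinal family of compacts of the form $\ov{B}_m\times L$, where $\ov{B}_m$ is the closed ball of radius $m$ in $\R^a$, and the excision property recalled in the bulleted list (with $U=\rB_{m+1}\times L$), I would identify each term with $\mathfrak{H}_*^{\ov{p}}(\rB_{m+1}\times L,(\rB_{m+1}\backslash\ov{B}_m)\times L;R)$. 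The pair $(\R^a,\R^a\backslash\{0\})$ deformation retracts onto $(\R^a,\R^a\backslash\ov B_m)$ in a stratum preserving way (the homotopy being the identity on the $L$ factor), so stratified homotopy invariance turns each finite relative group into $\mathfrak{H}_*^{\ov{p}}(\R^a\times L,(\R^a\backslash\{0\})\times L;R)$.

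The second step is to compute this single relative group. Here I would use the long exact sequence \eqref{suitexacteadroite} for the pair, together with the product formula: stratified homotopy invariance (the projection $\R^a\times L\to L$, or rather the cone-less analogue) gives $\mathfrak{H}_*^{\ov{p}}(\R^a\times L;R)\cong\mathfrak{H}_*^{\ov{p}}(L;R)$, and $(\R^a\backslash\{0\})\times L$ is stratified homotopy equivalent to $S^{a-1}\times L$ with the product filtration. So the relative homology is read off from the cofibration $S^{a-1}\times L\hookrightarrow D^a\times L$; since $D^a$ is contractible and $S^{a-1}\times L\to L$ is the trivial bundle, the long exact sequence degenerates and yields a shift by $a$, giving $\mathfrak{H}_{k}^{\ov{p}}(\R^a\times L,(\R^a\backslash\{0\})\times L;R)\cong\mathfrak{H}_{k-a}^{\ov{p}}(L;R)$. (Concretely one uses a Künneth-type argument: crossing a tame intersection chain on $L$ with the fundamental chain of $D^a$ relative to $S^{a-1}$ raises the degree by $a$, and this is an isomorphism because $R$ is a PID and $D^a$ is $R$-free in the relevant degrees. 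Alternatively, one inducts on $a$ using $\R^a=\R^{a-1}\times\R$ and the stratified homotopy invariance \cite[Proposition 7.7]{CST3}.)

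The last step is to check that the inverse limit over $m$ of the constant system $\{\mathfrak{H}_{k-a}^{\ov{p}}(L;R)\}_m$, with the bonding maps induced by the inclusions of compacts, is indeed this group and that no $\varprojlim^1$ term appears. Since every bonding map in the system is (up to the identifications above) the identity on $\mathfrak{H}_{k-a}^{\ov{p}}(L;R)$, both the inverse limit and its derived functor are immediate: $\varprojlim=\mathfrak{H}_{k-a}^{\ov{p}}(L;R)$ and $\varprojlim^1=0$. Passing from the inverse limit of chain complexes in \eqref{descom} to homology requires a Milnor exact sequence, and the vanishing of $\varprojlim^1$ of the homologies (which holds since the system is eventually constant) makes $\mathfrak{H}_{k}^{\BM,\ov{p}}(\R^a\times L;R)\cong\varprojlim_m\mathfrak{H}_{k}^{\ov{p}}(\R^a\times L,(\R^a\times L)\backslash K_m;R)\cong\mathfrak{H}_{k-a}^{\ov{p}}(L;R)$, as claimed.

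The main obstacle I anticipate is the second step: carefully justifying the degree shift by $a$ at the level of \emph{tame} intersection chains. One must make sure the excision and stratified homotopy invariance inputs from \cite{CST3} apply to the relative pairs that arise (the open set $\rB_{m+1}\times L$ is fine, but the deformation retraction must be stratum preserving and must interact correctly with the regularity/tameness conditions of \defref{tameNormHom}), and the Künneth comparison with $D^a$ must be set up so that tameness is preserved — crossing a regular simplex of $L$ with simplices of $D^a$ needs a small argument (or a subdivision) to stay inside $\gC_*^{\ov{p}}$. Once the $\R$-case is clean, the general $\R^a$ follows by the evident induction.
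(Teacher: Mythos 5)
Your proposal is correct and follows essentially the same route as the paper: reduce via the cofinal family of compacts $[-m,m]^a\times L$ (whose complements are all stratified homeomorphic, so the inverse system is constant and no $\varprojlim^1$ issue arises) to the single relative group $\mathfrak{H}_{k}^{\ov{p}}(\R^a\times L,(\R^a\backslash\{0\})\times L;R)$, and then obtain the degree shift by $a$ from a K\"unneth-type decomposition $\bigoplus_{i+j=k}H_{i}(\R^a,\R^a\backslash\{0\})\otimes\mathfrak{H}_{j}^{\ov{p}}(L;R)$. The paper states this more tersely but the decomposition, the key inputs (\propref{BMCompact}, excision, stratified homotopy invariance), and the final identification are the same.
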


\begin{proof}
We consider the following cofinal family of compact subsets of $\R^a\times L$,
$$\left\{K_{n}=[-n,n]^a\times L \mid n\in\N.\right\}.$$
As the open subsets $(\R^a\times L)\backslash K_{n}$ are stratified homeomorphic by the canonical inclusions,
we are reduced to the case $n=0$. From \propref{BMCompact}, we deduce:
\begin{eqnarray*}
\gH_{k}^{\BM,\ov{p}}(\R^a\times L)
&=&
\gH_{k}^{\BM,\ov{p}}(\R^a\times L, (\R^a\times L \backslash\{0\})\times L)
= \oplus_{i+j=k} H_{i}(\R^a, \R^a\backslash \{0\})\otimes \gH_{j}^{\ov{p}}(L)\\
&=&
H_{a}(S^a)\otimes \gH_{k-a}^{\ov{p}}(L)=\gH_{k-a}^{\ov{p}}(L).
\end{eqnarray*}
\end{proof}

Before the next determination of Borel-Moore homology, we need the value of the intersection
homology of a join with an $a$-dimensional  sphere $S^a$. 
Let $X$ be a filtered space.
Recall that the join $S^a\ast X$ is the quotient of the product
with the closed unit ball,
$B^{a+1}\times X$,
by the equivalence relation 
$(z,x)\sim (z,x')$ if $z\in S^a$. 
The strata of $S^a\ast X$ are $S^a$ and the products $\rB^{a+1}\times S$, with $S$ a stratum of $X$
and $\rB^{a+1}$ the open unit ball.
For any perversity on $X$, we get a perversity on the join from $\ov{p}(\rB^{a+1}\times S)=\ov{p}(S)$ and
the choice of a number $\ov{p}(S^a)$. 

\begin{lemma}\label{lem:join}
Let $\ov{p}$ be a perversity on the join $S^a\ast X$. We have
$$\gH_{k}^{\ov{p}}(S^a\ast X;R)=\left\{
\begin{array}{ccl}
\gH_{k}^{\ov{p}}(X;R)&\text{if}&k\leq D\ov{p}(S^a),\\
0&\text{if}&D\ov{p}(S^a)+1\leq k\leq D\ov{p}(S^a)+a+1,\\
\gH_{k-a-1}^{\ov{p}}(X;R)&\text{if}&k\geq D\ov{p}(S^a)+a+2.
\end{array}\right.$$
\end{lemma}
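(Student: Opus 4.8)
The plan is to identify $S^a \ast X$ with an iterated open cone up to homotopy/stratified equivalence and then iterate the cone formula \eqref{equa:conetame} together with the product formula of \propref{BMtimesR} and the long exact sequence \eqref{suitexacteadroite}. More precisely, I would first observe that $S^a \ast X$ decomposes into two pieces: the open set $U = S^a \ast X \setminus X$, which is stratified homeomorphic to $\rB^{a+1} \times X$ (hence, by stratified homotopy invariance, computes $\gH_*^{\ov p}(X)$ after the shift is absorbed — in fact $\rB^{a+1}\times X$ stratified-deformation-retracts onto $X$, so $\gH_k^{\ov p}(U) = \gH_k^{\ov p}(X)$), and a neighborhood $N$ of the singular sphere $S^a$, which retracts onto $S^a$; the point is that $N$ looks like $\rc(S^a \ast_{\partial} \cdots)$ — more usefully, a cofinal neighborhood of the $S^a$-stratum is stratified homeomorphic to $\R^? \times \rc(\,?\,)$. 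The cleanest route, which I would pursue, is instead the inductive one on $a$: prove $S^a \ast X = S^{a-1} \ast (\rc_{?} )$...

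\textbf{Preferred approach via Mayer--Vietoris.} Rather than wrestle with iterated cones, I would set $U_1 = (S^a \ast X) \setminus X$ and $U_2 = (S^a \ast X) \setminus S^a$. Then $U_1 \cong \rB^{a+1} \times X$ is stratified homotopy equivalent to $X$ (the apex-type stratum here is all of $S^a$, collapsed), so $\gH_k^{\ov p}(U_1) \cong \gH_k^{\ov p}(X)$; and $U_2$ is stratified homeomorphic to $\rB^{a+1} \times \rc X$ with apex coming from the cone point... no — more carefully, $U_2 = (B^{a+1}\times X)/\!\!\sim$ minus $S^a$ equals $\rB^{a+1} \times X \cup (\text{cone collar})$, which deformation retracts onto... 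Actually the transparent statement is: $U_1 \simeq X$, $U_2 \simeq \rc X$ (cone on $X$ with apex the collapsed $S^a$ shrunk to a point — but $S^a$ is not a point). I would instead take $U_2$ to be an open mapping cylinder neighborhood of $S^a$, which is stratified homeomorphic to the open cone $\rc(S^a \ast_{\text{fiber}} \cdots)$; concretely $U_2 \cong S^a \times \rc X \times \R$-ish. At this level of planning, the key inputs are clear: (i) $\gH_*^{\ov p}$ of the join minus the regular-side is $\gH_*^{\ov p}(X)$; (ii) $\gH_*^{\ov p}$ of the join minus $S^a$ is $\gH_*^{\ov p}$ of a cone on something with apex $S^a$, controlled by $\ov p(S^a)$ via \eqref{equa:conetame}, giving truncation at degree $D\ov p(S^a)$; (iii) the intersection $U_1 \cap U_2 \cong S^a \times \rB^{a+1}\times X \simeq S^a \times X$, whose intersection homology is $\bigoplus_{i+j=k} H_i(S^a)\otimes \gH_j^{\ov p}(X)$ by a Künneth argument as in the proof of \propref{BMtimesR}.

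\textbf{Assembling the Mayer--Vietoris sequence.} Feeding (i)--(iii) into the Mayer--Vietoris sequence for $\{U_1, U_2\}$ covering $S^a \ast X$, with $\gH_*^{\ov p}(U_2)$ being $\gH_*^{\ov p}(X)$ in degrees $\le D\ov p(S^a)$ and $0$ above, and $\gH_*^{\ov p}(U_1\cap U_2) = \gH_*^{\ov p}(X) \oplus \gH_{*-a}^{\ov p}(X)$, one reads off: in low degrees $k \le D\ov p(S^a)$ the map $\gH_k^{\ov p}(U_1) \oplus \gH_k^{\ov p}(U_2) \to \gH_k^{\ov p}(U_1\cap U_2)$ is the diagonal-type map whose cokernel in the sequence produces $\gH_k^{\ov p}(S^a \ast X) \cong \gH_k^{\ov p}(X)$; in the middle range $D\ov p(S^a)+1 \le k \le D\ov p(S^a)+a+1$ all relevant groups cancel (the $\gH_{*-a}$ summand of $U_1\cap U_2$ only becomes active once $k-a > D\ov p(S^a)$, i.e. $k \ge D\ov p(S^a)+a+1$, and the boundary map kills it against the vanishing of $\gH_*^{\ov p}(U_2)$), yielding $0$; and in high degrees $k \ge D\ov p(S^a)+a+2$ only the $\gH_{k-a-1}^{\ov p}(X)$ contribution survives. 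I would organize this as a short diagram chase, degree range by degree range.

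\textbf{Main obstacle.} The genuine difficulty is not the exact-sequence bookkeeping but pinning down the stratified homeomorphism types of $U_1$, $U_2$, and $U_1 \cap U_2$ and checking that \eqref{equa:conetame} applies to $U_2$ with exactly the cone parameter $\ov p(S^a)$ — i.e. verifying that the singular stratum $S^a$ of the join plays the role of the cone apex with $D\ov p$ of it governing the truncation, and that the induced perversities match under the identifications. I expect to spend most of the real work there, and to lean on the cofinal family of compacts / excision formalism recalled before \propref{BMtimesR} to make the retractions precise; once the three pieces are identified the rest is formal.
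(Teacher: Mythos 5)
Your overall route is the paper's route: cover $S^a\ast X$ by the complement of $S^a$ and a deleted neighbourhood of the central copy of $X$, identify the three pieces as $\rB^{a+1}\times X\simeq X$, $S^a\times\rc X$ and $S^a\times X$, and run Mayer--Vietoris with the cone formula \eqref{equa:conetame} and a K\"unneth decomposition over $H_*(S^a)$ (the paper takes $U=(S^a\ast X)\setminus S^a$ and $V=(S^a\ast X)\setminus F$ with $F$ a closed central chunk, which is your cover up to shrinking). However, as written your identifications are inverted in several places and one of them is load-bearing. It is the complement of $S^a$ that equals $\rB^{a+1}\times X$ and hence computes $\gH_k^{\ov{p}}(X)$; the complement of the central $X$ is $((B^{a+1}\setminus\{0\})\times X)/\!\sim\;\cong S^a\times\rc X$, the piece governed by the apex parameter $\ov{p}(S^a)$. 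Your item (ii), which attributes the cone behaviour to ``the join minus $S^a$'', has this exactly backwards, and the false starts in your second paragraph never fully resolve it.

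The genuine gap is your computation of the homology of the neighbourhood of $S^a$. You assert it is ``$\gH_*^{\ov{p}}(X)$ in degrees $\le D\ov{p}(S^a)$ and $0$ above'', i.e.\ you treat $S^a\times\rc X$ as if it were a single cone. In fact
$\gH_{\ell}^{\ov{p}}(S^a\times\rc X)=\gH_{\ell}^{\ov{p}}(\rc X)\oplus\gH_{\ell-a}^{\ov{p}}(\rc X)$,
and the second summand, which is nonzero up to degree $D\ov{p}(S^a)+a$, is precisely what receives the $H_a(S^a)\otimes\gH_{\ell-a}^{\ov{p}}(X)$ summand of $\gH_{\ell}^{\ov{p}}(U_1\cap U_2)$ under the Mayer--Vietoris map $J_\ell$. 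With your truncated answer that summand of the intersection would land in $\Ker J_\ell$ for every $\ell\ge a$, the connecting map would then contribute $\gH_{k-a-1}^{\ov{p}}(X)$ to $\gH_k^{\ov{p}}(S^a\ast X)$ throughout the middle band $D\ov{p}(S^a)+1\le k\le D\ov{p}(S^a)+a+1$, and the claimed vanishing there would fail. The correct bookkeeping (after splitting off the isomorphism $H_0(S^a)\otimes\gH_\ell^{\ov{p}}(X)\xrightarrow{\cong}\gH_\ell^{\ov{p}}(\rB^{a+1}\times X)$) reduces $J_\ell$ to $\gH_{\ell-a}^{\ov{p}}(X)\to\gH_{\ell-a}^{\ov{p}}(\rc X)$ plus the cokernel term $\gH_{\ell}^{\ov{p}}(\rc X)$, whence $\Ker J_\ell=0$ for $\ell\le D\ov{p}(S^a)+a$ and $\Coker J_\ell=\gH_\ell^{\ov{p}}(\rc X)$; only then do the three ranges of the statement come out. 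So the decomposition is right, but you need to fix which open set is which and restore the missing $H_a(S^a)$-summand before the diagram chase is valid.
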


\begin{proof}
Set $B^{a+1}_{1/2}=\{z\in\R^{a+1}\mid \|z\|\leq 1/2\}$
and $F=(B^{a+1}_{1/2}\times X)/\sim$. We apply the Mayer-Vietoris exact
sequence to the  open sets 
$U=(S^a\ast X)\backslash S^a$
and
$V=(S^a\ast X)\backslash F$,
\begin{equation}
\xymatrix@1{
\dots\ar[r]&\gH_{k}^{\ov{p}}(U\cap V)\ar[r]^-{J_{k}}&
\gH_{k}^{\ov{p}}(U)\oplus \gH_{k}^{\ov{p}}(V)\ar[r]&
\gH_{k}^{\ov{p}}(S^a\ast X)\ar[r]&
\gH_{k-1}^{\ov{p}}(U\cap V)\ar[r]^-{J_{k-1}}&\dots
}
\end{equation}
and get a short exact sequence
\begin{equation}\label{equa:MVraccourci}
\xymatrix@1{
0\ar[r]&
\Coker\, J_{k}\ar[r]&
\gH_{k}^{\ov{p}}(S^a\ast X)\ar[r]&
\Ker\, J_{k-1}\ar[r]&
0.
}
\end{equation}
We study the map $J_{\ell}$ and for that determine first,
\begin{itemize}
\item $\gH_{\ell}^{\ov{p}}(U)=\gH_{\ell}^{\ov{p}}(\rB^{a+1}\times X)=\gH_{\ell}^{\ov{p}}(X)$,
\item $\gH_{\ell}^{\ov{p}}(V)=\gH_{\ell}^{\ov{p}}((S^a\times ]1/2,1]\times X)/\sim)=
\gH_{\ell}^{\ov{p}}(S^a\times \rc X)=\oplus_{i+j=\ell}H_{i}(S^a)\otimes \gH_{j}^{\ov{p}}(\rc X)$,
\item $\gH_{\ell}^{\ov{p}}(U\cap V)=\gH_{\ell}^{\ov{p}}(S^a\times ]1/2,1[\times X)=
\gH_{\ell}^{\ov{p}}(S^a\times X)=\oplus_{i+j=\ell}H_{i}(S^a)\otimes \gH_{j}^{\ov{p}}( X)$.
\end{itemize}
The map $J_{\ell}$ can be written as
$$\oplus_{i+j=\ell}H_{i}(S^a)\otimes \gH_{j}^{\ov{p}}( X)\to
\gH_{\ell}^{\ov{p}}(X)\oplus \oplus_{i+j=\ell}H_{i}(S^a)\otimes \gH_{j}^{\ov{p}}( \rc X)$$
which can be simplified in
$$ \gH_{\ell-a}^{\ov{p}}( X)\to
\gH_{\ell}^{\ov{p}}(\rc X) \oplus \gH_{\ell-a}^{\ov{p}}(\rc X).$$
From (\ref{equa:conetame}), we deduce
\begin{eqnarray*}
\Ker \,J_{\ell}&=&
\left\{\begin{array}{ccl}
0&\text{if}&\ell\leq D\ov{p}(S^a)+a,\\
\gH_{\ell-a}^{\ov{p}}(X)&\text{if}&\ell>D\ov{p}(S^a)+a,
\end{array}\right.\\
\Coker \,J_{\ell}
&=& \gH_{\ell}^{\ov{p}}(\rc X).
\end{eqnarray*}
The result follows from (\ref{equa:conetame}) and (\ref{equa:MVraccourci}).
\end{proof}

\begin{proposition}\label{BMCone}
Let $L$ be a compact filtered space, $a\in\N$
and $\ov{p}$ a perversity on the open cone $\rc L$, of apex $\tv$.
By denoting $\ov{p}$ the perversity induced on $L$, we have
\[
\mathfrak{H}_{k}^{\BM,\overline{p}}(\mathbb{R}^a\times \rc L;R)=
\begin{cases}
0& \text{if } k\leq a+D\overline{p}(\tv)+1,\\
\tilde{\mathfrak{H}}_{k-a-1}^{\overline{p}}(L;R)  & \text{if } k>a+ D\overline{p}(\tv)+1.
\end{cases}
\]
\end{proposition}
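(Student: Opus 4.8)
The plan is to compute the Borel-Moore tame intersection homology of $\mathbb{R}^a \times \rc L$ by reducing it, via \propref{BMCompact}, to a limit of relative homologies and then exploiting the one-point compactification of the cone. First I would observe that $\mathbb{R}^a \times \rc L$ is locally compact, metrizable and separable, so \propref{BMCompact} applies and gives
\[
\mathfrak{H}_{k}^{\BM,\overline{p}}(\mathbb{R}^a\times \rc L;R)\cong
\varprojlim_{K}\mathfrak{H}_{k}^{\overline{p}}(\mathbb{R}^a\times \rc L, (\mathbb{R}^a\times \rc L)\backslash K;R).
\]
As in the proof of \propref{BMtimesR}, I would choose the cofinal family $K_n = [-n,n]^a \times c_{1-1/n} L$ of compact subsets (a product of a closed $a$-cube with a closed subcone of $\rc L$), so that the complements $(\mathbb{R}^a\times\rc L)\backslash K_n$ are all stratified homeomorphic and the inverse system is essentially constant. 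Hence it suffices to compute $\mathfrak{H}_{k}^{\overline{p}}(\mathbb{R}^a\times \rc L, (\mathbb{R}^a\times \rc L)\backslash K_0;R)$ for a single well-chosen compact $K_0$.

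Next I would identify this relative group with reduced homology of a join. The key geometric point is that collapsing the complement of $K_0$ in $\mathbb{R}^a \times \rc L$ produces, up to stratified homotopy equivalence and excision, the space $S^{a} \ast L$ with apex/sphere data matching $\ov p$: the $\mathbb{R}^a$ factor contributes an $S^a$ (its one-point compactification direction), the cone direction of $\rc L$ contributes the cone point, and $L$ itself survives. More precisely, using the excision property $\mathfrak H_{i}^{\ov{p}}(X,X\backslash K)=\mathfrak H_{i}^{\ov{p}}(U,U\backslash K)$ and the long exact sequence \eqref{suitexacteadroite} together with the stratified homotopy invariance and the cone formula \eqref{equa:conetame}, I would show
\[
\mathfrak{H}_{k}^{\overline{p}}(\mathbb{R}^a\times \rc L, (\mathbb{R}^a\times \rc L)\backslash K_0;R)\cong \widetilde{\mathfrak{H}}_{k}^{\ov p}(S^{a}\ast L;R),
\]
where the perversity on $S^a \ast L$ has $\ov p(S^a)$ determined so that $D\ov p(S^a) = D\ov p(\tv)$ (both governed by the cone apex value), and the reduced group is taken because the complement is nonempty and contractible in the relevant range. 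Then \lemref{lem:join} evaluates the right-hand side directly: the range $k \le D\ov p(\tv)$ gives $\mathfrak H_k^{\ov p}(L)$, the middle range is zero, and $k \ge D\ov p(\tv)+a+2$ gives $\mathfrak H_{k-a-1}^{\ov p}(L)$. Assembling these with the reduction $\widetilde{\mathfrak{H}}$ yields exactly the stated dichotomy: vanishing for $k \le a + D\ov p(\tv)+1$ and $\widetilde{\mathfrak{H}}_{k-a-1}^{\ov p}(L;R)$ otherwise.

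The main obstacle I anticipate is making the reduction to the join rigorous — in particular justifying that the relative complex $\mathfrak{C}^{\ov p}_*(\mathbb{R}^a\times\rc L, (\mathbb{R}^a\times\rc L)\backslash K_0;R)$ genuinely computes reduced tame intersection homology of $S^a \ast L$, and getting the indexing of the apex perversity right so that \lemref{lem:join} applies with the correct $D\ov p(S^a)$. One has to be careful that the tame condition (regularity of simplices) is preserved under the excisions and homotopies involved, and that the Mayer-Vietoris/cone computations are applied to the right open cover of $S^a \ast X$ with $X = L$; this is precisely the setup of \lemref{lem:join}, so the work is in verifying the homeomorphism of pairs rather than in new homological input. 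An alternative, perhaps cleaner, route would be to bypass the join entirely: write $\mathbb{R}^a \times \rc L$ with the product perversity, apply \propref{BMtimesR} to pull the $\mathbb{R}^a$ factor out as a shift by $a$, reducing to $\mathfrak{H}_{k-a}^{\BM,\ov p}(\rc L;R)$, and then compute $\mathfrak{H}_{*}^{\BM,\ov p}(\rc L;R)$ separately — the open cone is the mapping cone of $L \times ]0,1[ \hookrightarrow \rc L$, and a direct limit argument over subcones combined with \eqref{equa:conetame} and the long exact sequence \eqref{suitexacteadroite} gives the vanishing below $D\ov p(\tv)+1$ and $\widetilde{\mathfrak{H}}_{*-1}^{\ov p}(L;R)$ above. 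I would likely present whichever of these two the surrounding text has set up, defaulting to the join approach since \lemref{lem:join} was just proved for exactly this purpose.
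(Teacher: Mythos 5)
Your reduction via \propref{BMCompact} to a single relative group $\gH_{k}^{\ov{p}}(\R^a\times\rc L,(\R^a\times\rc L)\setminus K_{0};R)$ matches the paper, but the central identification of that group with $\widetilde{\gH}_{k}^{\ov{p}}(S^{a}\ast L;R)$ is false, and this is where the argument breaks. The one-point-compactification heuristic ($H^{\BM}_{*}(Y)\cong\widetilde{H}_{*}(Y^{+})$) does not survive the passage to intersection homology: the added point becomes a new singular stratum, and the local calculation there is governed by the cone formula (a truncation), not by an ordinary collapse. Concretely, the link of $(0,\tv)$ in $\R^a\times\rc L$ is $S^{a-1}\ast L$ --- this is the content of the paper's stratified homeomorphism $\rc(S^{a-1}\ast L)\setminus\{\tu\}\cong(\R^a\times\rc L)\setminus\{(0,\tv)\}$ --- and the relative group sits in the long exact sequence of the pair, whose absolute term $\gH_{k}^{\ov{p}}(\R^a\times\rc L)\cong\gH_{k}^{\ov{p}}(\rc L)$ equals $\gH_{k}^{\ov{p}}(L)$ for $k\le D\ov{p}(\tv)$ and is therefore \emph{not} acyclic. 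The extra suspension coordinate that would upgrade $S^{a-1}\ast L$ to $S^{a}\ast L$ is precisely the cone direction, which is perversely truncated rather than freely suspended. Your own assembly already signals the problem: the first line of \lemref{lem:join} contributes $\gH_{k}^{\ov{p}}(L)$ for $1\le k\le D\ov{p}(\tv)$, and passing to reduced homology only affects degree $0$, so it cannot produce the vanishing for all $k\le a+D\ov{p}(\tv)+1$ asserted in the statement. For a concrete failure take $a=0$, $L=S^{1}\times S^{2}$ unfiltered, $\ov{p}=\ov{0}$, so $D\ov{0}(\tv)=2$: the proposition gives $0$ in degrees $\le 3$ and $R$ in degree $4$, whereas $\widetilde{\gH}_{k}^{\ov{0}}(S^{0}\ast L)$ computed by \lemref{lem:join} is $R$ in degrees $1$, $2$ and $4$.

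Your alternative route has a separate gap: \propref{BMtimesR} is stated, and its proof (via the cofinal family $[-n,n]^a\times L$) only works, for a \emph{compact} second factor, so it cannot be invoked to pull $\R^a$ out of $\R^a\times\rc L$; a Borel--Moore K\"unneth statement with the non-compact factor $\rc L$ is essentially equivalent to the proposition you are trying to prove. The paper's actual argument is: reduce to the pair $(\R^a\times\rc L,(\R^a\times\rc L)\setminus\{(0,\tv)\})$, identify the deleted space with $(S^{a-1}\ast L)\times\,]0,1[$ by the explicit homeomorphism above (checking it is stratified), compute the map $\gH_{k}^{\ov{p}}(S^{a-1}\ast L)\to\gH_{k}^{\ov{p}}(\rc L)$ using \lemref{lem:join} on the source and \eqref{equa:conetame} on the target, and read off kernel and cokernel in the long exact sequence. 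So \lemref{lem:join} is indeed the key input, but applied to $S^{a-1}\ast L$ as the link of the deleted point, not to $S^{a}\ast L$ as a compactification.
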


\begin{proof}
We consider the following cofinal family of compact subsets of $\R^a\times \rc L$,
$$\left\{K_{n}=[-n,n]^a\times \tc_{n} L \mid n\in\N^*\right\}\cup  \left\{K_{0}=\{(0,\tv)\}\right\},$$
where $\tc_{n}L=\{[x,t]\in \rc L\mid t\leq n/(n+1)\}$.
As the open subsets $(\R^a\times \rc L)\backslash K_{n}$ are stratified homeomorphic by the canonical inclusions,
we are reduced to the case $n=0$. 
From \propref{BMCompact}, we deduce:
$$\mathfrak{H}_{k}^{\BM,\overline{p}}(\mathbb{R}^a\times\overset{\circ}{c}L)=
\gH_{k}^{\ov{p}}(\R^a\times \rc L, (\R^a\times \rc L)\backslash \{(0,\tv)\}).$$
Let $\tu$ and $\tv$ be the respective apex of 
$ \rc(S^{a-1}\ast L)$ and $\rc L$.
Let $\alpha\colon [0,1[\to [0,\infty[$ be a fixed homeomorphism.
From \cite[5.7.4]{MR2273730}, we get the homeomorphism
$$h\colon \rc(S^{a-1}\ast L)\backslash \{\tu\}
\to
(\R^a\times \rc L)\backslash \{(0,\tv)\},
$$
given by
\begin{equation}
h([z,y],r)=\left\{
\begin{array}{ccl}
(2rz,[y,\alpha(r)])&\text{if}&\|z\|\leq 1/2,\\
(rz/\|z\|,[y,\alpha(2r(1-\|z\|))])&\text{if}&\|z\|\geq 1/2.
\end{array}\right.
\end{equation}
Let us see that $h$ preserves the stratifications. We distinguish the following  cases.
\begin{itemize}
\item $h(S^{a-1}\times ]0,1[)=(\R^a\times\{\tv\})\backslash \{(0,\tv)\}$ since
$h([z,y],r)=(rz,\tv)$ if $\|z\|=1$.
\item The restriction $h\colon \rB^a\times L\times ]0,1[\to \R^a\times L\times ]0,1[$,
given by
$$(z,y,r)\mapsto\left\{
\begin{array}{ccl}
(2rz,y,\alpha(r))&\text{if}&\|z\|\leq 1/2,\\
(rz/\|z\|,y,\alpha(2r(1-\|z\|)))&\text{if}&\|z\|\geq 1/2,
\end{array}\right.$$
\end{itemize}
is clearly a stratified homeomoprhism. 
The determination of the inclusion map
$$
\gH_{k}^{\ov{p}}((\R^a\times \rc L)\backslash \{(0,\tv)\})\to
\gH_{k}^{\ov{p}}(\R^a\times \rc L)
$$
can thus be replaced by the determination of
$$\gH_{k}^{\ov{p}}(\rc (S^{a-1}\ast L)\backslash \{\tu\})\to
\gH_{k}^{\ov{p}}(\rc L).$$
Using \lemref{lem:join} for the domain and (\ref{equa:conetame}) for the range, we obtain,
\begin{equation}
\left\{\begin{array}{ccl}
\gH_{k}^{\ov{p}}(L)\xrightarrow{\id}\gH_{k}^{\ov{p}}(L)&\text{if}&k\leq D\ov{p}(\tv),\\
0\to 0&\text{if}&D\ov{p}(\tv)+1\leq k\leq D\ov{p}(\tv)+a,\\
\gH_{k-a}^{\ov{p}}(L)\to 0&\text{if}&k\geq D\ov{p}(\tv)+a+1.
\end{array}\right.
\end{equation}
The result follows from the long exact sequence of intersection homology.
\end{proof}

Mention from \cite[Proposition 6.7]{CST4} the existence of cap products,
\begin{equation}\label{equa:caphomology}
 \widetilde{N}_{\overline{q}}^{i}(X;R)\otimes \mathfrak{C}_{j}^{\overline{p}}(X;R)
\xrightarrow{-\cap -} \mathfrak{C}_{j-i}^{\overline{p}+\overline{q}}(X;R)
\end{equation}
such that (\cite[Proposition 6.6]{CST4})
$$
(\eta\cup \omega)\cap \xi=\eta\cap(\omega\cap\xi).
$$
Hence, the collection $\{\mathfrak{C}_{*}^{\overline{p}}(X;R)\}_{\overline{p}\in\mathcal{P}}$
is a left perverse module over the perverse algebra $\left\{ \widetilde{N}_{\overline{q}}^{*}(X;R)\right\} _{\overline{q}\in\mathcal{P}}$.
This cap product 
can also be extended in a cap product,
\begin{equation}\label{equa:capBM}
\widetilde{N}_{\overline{q}}^{i}(X;R)\otimes \mathfrak{C}_{j}^{\BM,\overline{p}}(X;R)
\xrightarrow{-\cap -} \mathfrak{C}_{j-i}^{\BM,\overline{p}+\overline{q}}(X;R),
\end{equation}
as follows. As (see (\ref{desc}))  any $c \in \mathfrak C^{\BM,\overline p}_*(X;R)$ can be written as  a locally finite sum of 
 $\ov p$-intersection chains,
$ \xi  = \sum_{k \in J}\xi_k$, we set
$$\omega\cap \xi= \sum_{k \in J} (\omega\cap \xi_{k}).$$
A verification shows that this cap product is well defined and makes of
$\{\mathfrak{C}_{*}^{\BM,\overline{p}}(X;R)\}_{\overline{p}\in\mathcal{P}}$
 a left perverse module over the perverse algebra $\left\{ \widetilde{N}_{\overline{q}}^{*}(X;R)\right\} _{\overline{q}\in\mathcal{P}}$. 

%%%%%%%%%%%%%%%%%
\subsection{Intersection cochains}

The tame $\ov{p}$-intersection cohomology $\mathfrak{H}^*_{\ov{p}}(X;R)$
is defined from the dual complex
$\mathfrak{C}^*_{\ov{p}}(X;R)=\Hom(\mathfrak{C}_{*}^{\ov{p}}(X;R),R)$.
We still denote $\gd$ its differential.
This cohomology satisfies properties  as those already listed for the tame homology (see \cite{CST4}):
naturality with respect to stratum preserving maps,  Mayer-Vietoris property and  invariance for stratified homotopy.
Moreover, the case of an open cone on a compact perverse space is determined by 
\[
\mathfrak{H}_{\overline{p}}^{k}(\overset{\circ}{c}L;R)=
\begin{cases}
\mathfrak{H}_{\overline{p}}^{k}(L;R) & \text{if } k\leq D{p}(\tv),\\
\Ext(\mathfrak{H}_{k-1}^{\overline{p}}(L;R),R) & \text{if } k=D\overline{p}(\tv)+1,\\
0 & \text{if } k>D\overline{p}(\tv)+1.
\end{cases}
\]
The cap product (\ref{equa:caphomology}) defines a map
\begin{equation}\label{equa:capcohomology}
\star\colon
\mathfrak{C}^{j}_{\overline{p}}(X;R)
\otimes
 \widetilde{N}_{\overline{q}}^{i}(X;R)
 \longrightarrow
 \mathfrak{C}^{j+i}_{\overline{p}-\overline{q}}(X;R)
\end{equation}
by
$$(c\star \omega)(\xi)=c(\omega\cap\xi).$$
We check easily
$c\star (\omega\cup \eta)=(c\star\omega)\star\eta$.
Hence, the collection $\{\mathfrak{C}^{*}_{\overline{p}}(X;R)\}_{\overline{p}\in\mathcal{P}}$
is a right perverse module over the perverse algebra 
$\left\{ \widetilde{N}_{\overline{q}}^{*}(X;R)\right\} _{\overline{q}\in\mathcal{P}}$. 
This operation respects the cochains with compact supports and makes
$\{\mathfrak{C}^{*}_{\overline{p},c}(X;R)
\}_{\ov{p}\in\cP}$
a right perverse module over the perverse algebra
$\{\tN_{\ov{q}}^*(X;R)\}_{\ov{q}\in\cP}$. 

\medskip
There is also a notion of $\mathcal{U}$-small cochains relatively to an open covering, as follows.

\begin{definition}
Let $\mathcal{U}$ be  an open covering  of $X$ and $\ov{p}$ a perversity on $X$. 
The complex
of \textbf{$\mathcal{U}$-small cochains} is  $\mathfrak{C}_{\overline{p}}^{*,\mathcal{U}}(X;R)
= \Hom (\mathfrak{C}^{\overline{p},\mathcal{U}}_{*}(X;R),R)$.
If $V$ is an open
subset of $X$, we recall the category $\Cov(V)$ of \defref{def:covV}.
The \textbf{sheaf of intersection cochains} $\mathbf{C}_{\overline{p}}^{*}$
is defined by its sections as the direct limit 
\begin{equation}\label{equa:SoftfaltC}
\Gamma(V;\mathbf{C}_{\overline{p}}^{*})=
\colim_{\mathcal{U}\in \Cov(V)}\mathfrak{C}_{\overline{p}}^{*,\mathcal{U}}(V;R).
\end{equation}
\end{definition}

\begin{proposition}\label{SoftFlatC}
Let $(X,\ov{p})$ be a perverse space. The complex of sheaves 
$\mathbf{C}^*_{\overline{p}}$ over $X$
is a complex of soft and flat sheaves.
\end{proposition}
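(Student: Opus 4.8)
The plan is to follow, almost word for word, the proof of \propref{SoftFlatTW}, the only differences being that the relevant module structure over $\widetilde{\mathbf{N}}^0_{\overline 0}$ now comes from the $\star$-operation of \eqref{equa:capcohomology} instead of from the cup product, and that torsion-freeness is automatic because intersection cochains are by construction a linear dual.

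\emph{Flatness.} A direct limit of flat $R$-modules being flat, and the stalk $(\mathbf{C}^*_{\overline p})_x$ being a direct limit of the modules of sections $\Gamma(V;\mathbf{C}^*_{\overline p})$, it suffices to prove that each $\Gamma(V;\mathbf{C}^*_{\overline p})$ is flat; by \eqref{equa:SoftfaltC} this module is itself the direct limit of the $\mathcal U$-small cochains $\mathfrak{C}^{*,\mathcal U}_{\overline p}(V;R)$, so it is enough that each of these be flat. But $\mathfrak{C}^{*,\mathcal U}_{\overline p}(V;R)=\Hom(\mathfrak{C}^{\overline p,\mathcal U}_*(V;R),R)$ is the $R$-linear dual of an $R$-module, hence $R$-torsion free: if $r\varphi=0$ with $r\in R\setminus\{0\}$, then $r\varphi(\xi)=0$ in the domain $R$ for every chain $\xi$, so $\varphi=0$. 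As we work over a principal ideal domain a torsion free module is flat, which settles this part.

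\emph{Softness.} The operation $\star$ of \eqref{equa:capcohomology}, taken with $\overline q=\overline 0$, makes $\{\mathfrak{C}^*_{\overline p}(X;R)\}$ a perverse right module over $\{\widetilde N^*_{\overline 0}(X;R)\}$; since $\star$ is natural with respect to restriction maps it descends to the $\mathcal U$-small cochains, and passing to the direct limit in \eqref{equa:SoftfaltC} it turns the complex of sheaves $\mathbf{C}^*_{\overline p}$ into a complex of $\widetilde{\mathbf{N}}^0_{\overline 0}$-modules, hence of $\mathbf{N}^0$-modules via the morphism of sheaves of rings $\mathbf{N}^0\to\widetilde{\mathbf{N}}^0_{\overline 0}$ produced in the proof of \propref{SoftFlatTW} (coming from \cite[Lemma 10.2]{CST4}). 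As a complex of modules over the soft sheaf of unital rings $\mathbf{N}^0$, it is soft.

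I do not anticipate a genuine obstacle: every ingredient — the properties of $\star$, the ring morphism $\mathbf{N}^0\to\widetilde{\mathbf{N}}^0_{\overline 0}$, and the fact that modules over a soft sheaf of rings are soft — is already available from the earlier sections. The one point deserving a line of care is checking that $\star$ really does descend to the $\mathcal U$-small cochains and hence to the sheaf level, so that the $\widetilde{\mathbf{N}}^0_{\overline 0}$-module structure on $\mathbf{C}^*_{\overline p}$ is legitimate; this is a formal consequence of the compatibility of $\star$ with restrictions.
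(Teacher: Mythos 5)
Your proposal is correct and follows essentially the same route as the paper: flatness via torsion-freeness over a principal ideal domain (your observation that the $\mathcal U$-small cochains are linear duals, hence directly torsion-free, is a slightly cleaner version of the paper's "similar to \propref{SoftFlatTW}"), and softness via the right $\widetilde{\mathbf{N}}^*_{\overline 0}$-module structure coming from $\star$ together with the softness of $\widetilde{\mathbf{N}}^*_{\overline 0}$. The paper states the softness step more tersely, resting directly on the softness of $\widetilde{\mathbf{N}}^*_{\overline 0}$ rather than re-descending to $\mathbf{N}^0$, but the content is the same.
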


\begin{proof}
(Softness) We have defined, on the complex of cochains $\mathfrak{C}_{\overline{p}}^{*}(X;R)$, a structure of  right module over $\widetilde{N}^*_{\overline{0}}(X;R)$. This structure  being compatible with the restriction maps,
  the sheaf  $\mathbf{C}_{\overline{p}}^{*}$ is a sheaf of right modules over $\widetilde{\mathbf{N}}_{\overline{0}}^{*}$. 
  Thus the softness of $\widetilde{\mathbf{N}}_{\overline{0}}^{*}$ implies the softness of $\mathbf{C}_{\overline{p}}^{*}$.  
\\
(Flatness) The proof is similar to the proof made for $\widetilde{\mathbf{N}}_{\overline{p}}^{*}$ (see \ref{SoftFlatTW}).
\end{proof}

Mention also the existence in \cite{FM} of an intersection cohomology with compact supports defined by
$\gH^*_{\ov{q},c}(X;R)=\lim_{K \text{\rm compact}}\gH^*_{\ov{q}}(X,X\backslash K;R)$.
In \cite{CST2}, \cite{CST4} we have connected these cohomologies with the blown-up cohomology.
In particular, we have established the following isomorphisms.

\begin{proposition}\label{prop:TWadroite}
Let $(X,\ov{p})$  be a paracompact perverse pseudomanifold and $R$ a field. 
Then there are  isomorphisms
\begin{equation}\label{equa:GMdroiteTWc}
\crH^*_{\ov{p},c}(X;R)\cong \gH^*_{D\ov{p},c}(X;R) 
\text{ and }
\crH^*_{\ov{p}}(X;R)\cong \gH^*_{D\ov{p}}(X;R) 
\end{equation}
\end{proposition}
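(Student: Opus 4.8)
The plan is to prove \propref{prop:TWadroite} by identifying both sides of each isomorphism with the hypercohomology (with compact supports, resp.\ without supports) of Deligne-type sheaves, and then using Verdier duality on $X$.

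First I would recall that by \thmref{DeligneTW} the sheaf $\widetilde{\mathbf{N}}_{\overline{p}}^{*}$ is isomorphic to $\mathbf{Q}^*_{\overline{p}}$ in $\mathcal{D}(X)$, so that $\crH^*_{\ov{p},c}(X;R)\cong \mathbb{H}^*_c(X;\mathbf{Q}^*_{\overline{p}})$ and $\crH^*_{\ov{p}}(X;R)\cong \mathbb{H}^*(X;\mathbf{Q}^*_{\overline{p}})$, using the two propositions computing blown-up cohomology (with and without compact supports) as hypercohomology of $\widetilde{\mathbf{N}}_{\overline{p}}^{*}$. On the chain side, the sheafification $\mathbf{C}^*_{\overline{p}}$ of the tame intersection cochains is, by \propref{SoftFlatC}, a complex of soft sheaves; its stalk cohomology is computed, just as in the proof of \thmref{DeligneTW}, from the local cone formula for $\mathfrak{H}^*_{\ov{p}}(\rc L;R)$ recalled above. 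That local computation shows $\mathbf{C}^*_{\overline{p}}$ satisfies the Deligne axioms $(AX)_{D\ov{p}}$ (the shift from $\ov{p}$ to $D\ov{p}$ coming precisely from the cone formula, where the cutoff is $D\ov{p}(\tv)$ rather than $\ov{p}(\tv)$), whence $\mathbf{C}^*_{\overline{p}}\cong \mathbf{Q}^*_{D\ov{p}}$ in $\mathcal{D}(X)$ by \propref{prop:superdeligne}. Over a field this already gives $\gH^*_{D\ov{p},c}(X;R)\cong \mathbb{H}^*_c(X;\mathbf{Q}^*_{D\ov{p}})$ and similarly without supports, by the same softness argument used for the blown-up sheaf.

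Then I would invoke Verdier duality: over a field $R$, the Goresky--MacPherson duality $\mathbb{D}(\mathbf{Q}^*_{\ov{p}}[n])\cong \mathbf{Q}^*_{D\ov{p}}$ in $\mathcal{D}(X)$ (valid in Friedman's general-perversity setting, \cite{FR2}), together with the standard identity $\mathbb{H}^i_c(X;\mathbf{F}^*)^\vee\cong \mathbb{H}^{-i}(X;\mathbb{D}\mathbf{F}^*)$, yields a nonsingular pairing $\mathbb{H}^i(X;\mathbf{Q}^*_{\ov{p}})\otimes \mathbb{H}^{n-i}_c(X;\mathbf{Q}^*_{D\ov{p}})\to R$. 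Combining this with the two sheaf identifications of the previous paragraph gives $\crH^i_{\ov{p}}(X;R)\cong \mathbb{H}^i(X;\mathbf{Q}^*_{\ov{p}})$ and $\gH^i_{D\ov{p},c}(X;R)\cong \mathbb{H}^i_c(X;\mathbf{Q}^*_{D\ov{p}})$ both dual to the same $R$-vector space, hence isomorphic to each other; the same argument with the roles of supports exchanged (and $\ov{p}$ replaced by $D\ov{p}$, using $DD\ov{p}=\ov{p}$) gives the companion isomorphism $\crH^*_{\ov{p},c}(X;R)\cong \gH^*_{D\ov{p},c}(X;R)$. Paracompactness of $X$ is what makes the soft resolutions compute hypercohomology and makes Verdier duality available in this generality.

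The main obstacle I anticipate is verifying cleanly that $\mathbf{C}^*_{\overline{p}}$ satisfies the Deligne axioms with the shifted perversity $D\ov{p}$ — in particular checking axiom (3\,bis), i.e.\ that the local attaching map for $\mathbf{C}^*_{\overline{p}}$ is an isomorphism up to degree $D\ov{p}(S)$. This requires the stratified homotopy invariance, the Mayer--Vietoris property, and the cone formula for $\gH^*_{\ov{p}}(\rc L;R)$ to interact exactly as in the proof of \thmref{DeligneTW}, and one must be careful that the $\Ext$-term appearing in the cone cohomology in degree $D\ov{p}(\tv)+1$ does not interfere with the truncation at degree $D\ov{p}(\tv)$ — over a field this $\Ext$-term vanishes, which is precisely where the field hypothesis is used. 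A secondary point is being careful about which perversity conventions ($\ov{p}$ versus $D\ov{p}$, and the cap-product-induced module structures) are in force, so that the duality is stated with the correct indices; but this is bookkeeping rather than a genuine difficulty.
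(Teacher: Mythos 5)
The first thing to say is that the paper does not prove \propref{prop:TWadroite} at all: it is quoted from \cite{CST2} and \cite{CST4} (``we have established the following isomorphisms''), so there is no in-text argument to compare yours against line by line. In those references the isomorphisms are obtained by explicit chain-level comparison maps together with a Mayer--Vietoris/induction argument, not by identifying both sides with Deligne sheaves, so your route is genuinely different from the one the authors rely on.

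On its merits, the core of your proposal is sound, and in fact your second paragraph already suffices: over a field the sheaf $\mathbf{C}^*_{\ov{q}}$ is soft (\propref{SoftFlatC}) and the cone formula $\gH^k_{\ov{q}}(\rc L;R)\cong\gH^k_{\ov{q}}(L;R)$ for $k\le D\ov{q}(\tv)$, with vanishing above (the $\Ext$-term dying over a field, as you correctly flag), gives the axioms $(AX)_{D\ov{q}}$; applying this with $\ov{q}=D\ov{p}$ yields $\mathbf{C}^*_{D\ov{p}}\cong\mathbf{Q}^*_{\ov{p}}\cong\widetilde{\mathbf{N}}^*_{\ov{p}}$ in $\mathcal{D}(X)$, and since all these complexes are soft on the paracompact, locally compact space $X$, both $\Gamma$ and $\Gamma_c$ compute hypercohomology, giving the two stated isomorphisms simultaneously. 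Two corrections are needed, however. First, there is an index slip: from $\mathbf{C}^*_{\ov{p}}\cong\mathbf{Q}^*_{D\ov{p}}$ one deduces $\gH^*_{\ov{p},c}(X;R)\cong\mathbb{H}^*_c(X;\mathbf{Q}^*_{D\ov{p}})$, not $\gH^*_{D\ov{p},c}(X;R)\cong\mathbb{H}^*_c(X;\mathbf{Q}^*_{D\ov{p}})$; you must then substitute $\ov{p}\mapsto D\ov{p}$ to reach the stated form. Second, the Verdier-duality paragraph is both unnecessary and, as written, does not assemble: the pairing relates $\mathbb{H}^i(X;\mathbf{Q}^*_{\ov{p}})$ to $\mathbb{H}^{n-i}_c(X;\mathbf{Q}^*_{D\ov{p}})$ --- complementary degree \emph{and} opposite supports --- whereas the proposition compares equal degrees with the same support condition on both sides; the phrase ``both dual to the same $R$-vector space'' has no identified referent, and for non-compact $X$ these spaces are typically infinite-dimensional, so a double-dual argument would not close up anyway. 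Delete the Verdier step, fix the indices, and what remains is a correct sheaf-theoretic proof.
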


%%%%%%%%%%%

\subsection{Bidual}

Let us take  an injective resolution of $R$,
$$0\rightarrow R\rightarrow Q\stackrel{\rho}{\twoheadrightarrow} Q/R\rightarrow 0.$$
We denote by $I_R^*$ the cochain complex 
$\xymatrix@1{
I_R^0=Q\ar@{->>}[r]^-{\rho}
&
I^1_R=Q/R
}$, of differential $d=\rho$.
We define a chain complex 
${\Hom}_{*}(\mathfrak{C}^*_{\overline{p},c}(X;R),I_R^*)
$,
whose  set
${\Hom}_{k}(\mathfrak{C}^*_{\overline{p},c}(X;R),I_R^*)
$
of elements of degree $k$ is formed of the couples $(\varphi_{0},\varphi_{1})$ such that
$$\varphi_{0}\colon \mathfrak{C}^k_{\overline{p},c}(X;R) 
\to Q\text{ and } 
\varphi_{1}\colon \mathfrak{C}^{k+1}_{\overline{p},c}(X;R)  \to Q/R,$$
and whose  differential is defined by
$\mathbf{d}(\varphi_{0},\varphi_{1})= ((-1)^k\varphi_{0}\circ \gd, (-1)^k\varphi_{1}\circ \gd + d\circ \varphi_{0})
$.

\smallskip
 The star product (\ref{equa:capcohomology}) defines a map
 \begin{equation}\label{equa:dualstar}
 \perp\colon \tN^i_{\ov{q}}(X;R)\otimes \Hom_{j}(\mathfrak{C}^*_{\ov{p},c},I_{R}^*)
 \to
 \Hom_{j-i}(\mathfrak{C}^*_{\ov{p}+\ov{q},c},I_{R}^*)
 \end{equation}
by
$$(\omega\perp\varphi)(c)=\varphi(c\star\omega).$$
We check easily
$(\omega\cup \eta)\perp \varphi=\omega\perp (\eta \perp \varphi)$.
Hence, the collection $\left\{\Hom_{*}(\mathfrak{C}^*_{\ov{p},c},I_{R}^*)\right\}_{\ov{p}\in\cP}$
is a left perverse module over the perverse algebra 
$\left\{ \widetilde{N}_{\overline{q}}^{*}(X;R)\right\} _{\overline{q}\in\mathcal{P}}$.

\smallskip
The following result is the well-known ``biduality'' for intersection homology, proved in 
\cite[Theorem V.8.10]{Bor} in  sheaf theory 
 and in classical algebraic homology in \cite{Span}, see also \cite{MR1366538}. 
We give a short proof below.

\begin{proposition} \label{prop:spanier}
Let $(X,\ov{p})$ be a paracompact perverse pseudomanifold. 
Suppose that $X$ admits an exhaustive sequence of compacts $X=\cup_{i}K_{i}$, $K_{i}\subset \mathring{K}_{i+1}$,
such that the $R$-module 
\begin{equation}\label{equa:finite}
\mathfrak{H}^*_{\overline{p}}(X,X\backslash K_{i};R) \text{ is finitely generated.}
\end{equation}
Then the morphism
\begin{equation}\label{equa:cochainchain}
\Phi_X\colon \mathfrak{C}^{\BM,\overline{p}}_{k}(X;R)\rightarrow 
{\Hom}_{k}(\mathfrak{C}^*_{\overline{p},c}(X;R),I_R^*),
\end{equation}
defined by $\Phi_{X}(\xi)=(\varphi(\xi)_{0},0)$ with
$\varphi(\xi)_{0}(c)=c(\xi)\in R\subset Q$, is well defined and a quasi-isomorphism,
linear for the  structure of 
perverse modules over the perverse algebra 
$\left\{ \widetilde{N}_{\overline{q}}^{*}(X;R)\right\} _{\overline{q}\in\mathcal{P}}$. 
\end{proposition}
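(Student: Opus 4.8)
The strategy is the classical one for biduality theorems of this type: reduce from the global statement on $X$ to a local statement on the members of an open cover, and then run a Mayer--Vietoris / five-lemma induction over a cofinal family of opens, using the cone formula as the base case. First I would note that $\Phi_X$ is well-defined and a chain map: the formula $\varphi(\xi)_0(c) = c(\xi)$ makes sense because, by \propref{BMCompact}, a Borel--Moore tame chain $\xi$ restricts to a finite chain modulo $X\backslash K$ for every compact $K$, while a compactly supported cochain $c \in \mathfrak{C}^*_{\ov p,c}(X;R)$ is, up to coboundary, supported in some $X\backslash(X\backslash K)$-type neighbourhood, so the pairing $c(\xi)$ involves only finitely many simplices; the sign conventions in $\mathbf{d}$ are exactly arranged so that $\Phi_X$ commutes with the differentials, and the second component being $0$ is consistent since $c(\xi) \in R$ maps to $0$ in $Q/R$. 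The $\widetilde N^*_{\ov q}$-linearity is immediate from the defining identities $(c\star\omega)(\xi) = c(\omega\cap\xi)$ and $(\omega\perp\varphi)(c) = \varphi(c\star\omega)$ together with $\omega\cap\xi = \sum_k \omega\cap\xi_k$.

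\textbf{The quasi-isomorphism.} The heart is showing $\Phi_X$ induces an isomorphism on homology. I would first rewrite the target: since $I_R^*$ is a two-term injective resolution of $R$ and $\mathfrak{C}^*_{\ov p,c}(X;R)$ is a complex of free (hence flat) $R$-modules computing $\gH^*_{\ov p,c}(X;R)$, the complex ${\Hom}_*(\mathfrak{C}^*_{\ov p,c}(X;R),I_R^*)$ computes, via a universal-coefficients / hyperderived-functor spectral sequence, the graded $R$-module $\mathrm{Hom}(\gH^{-*}_{\ov p,c}(X;R),R) \oplus \Ext(\gH^{-*+1}_{\ov p,c}(X;R),R)$ — i.e. the full Pontryagin-type dual. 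Under the finiteness hypothesis \eqref{equa:finite}, $\gH^*_{\ov p,c}(X;R)$ is finitely generated (being a colimit of the finitely generated $\gH^*_{\ov p}(X,X\backslash K_i;R)$ over the cofinal $\{K_i\}$, and a colimit of f.g.\ modules over $\mathbb N$ with eventually... here one must be slightly careful, but the exhaustion gives $\gH^*_{\ov p,c} = \colim_i \gH^*_{\ov p}(X,X\backslash K_i;R)$). Then I would set up a commutative ladder: the Mayer--Vietoris sequence for $\mathfrak{C}^{\BM,\ov p}_*$ on a pair of opens $U,V$ (which holds because $\mathfrak{C}^{\BM,\ov p}_* = \varprojlim_K \mathfrak{C}^{\ov p}_*(X,X\backslash K)$ by \propref{BMCompact}, and relative tame intersection homology has Mayer--Vietoris) maps via $\Phi$ to the ``dual'' Mayer--Vietoris sequence coming from the long exact sequence of compactly supported cochains. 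By the five lemma, if $\Phi$ is a quasi-isomorphism for $U$, $V$ and $U\cap V$, it is one for $U\cup V$.

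\textbf{Base case and induction.} The induction proceeds over opens that are either (a) contained in a distinguished chart $\varphi\colon W \xrightarrow{\cong} \mathbb R^{n-k}\times\rc L$, where by \propref{BMtimesR}, \propref{BMCone}, the cone formula for $\gH^*_{\ov p}$ recalled after \defref{tameNormHom}, and stratified homotopy invariance, one computes both sides explicitly and matches them (this is where the $D\ov p(\tv)$ and $D\ov p(\tv)+1$ cases of the cone formulas feed into the $\mathrm{Hom}$ and $\Ext$ parts of the dual respectively — the $\Ext$ term is exactly why $I_R^*$ has two terms); or (b) built from finitely many charts by Mayer--Vietoris; or (c) an increasing union, handled by passing to the colimit — here one uses that Borel--Moore homology sends such unions to an inverse limit and compactly supported cohomology to a direct limit, and that $\mathrm{Hom}(-,I_R^*)$ converts the relevant $\colim$ to $\varprojlim$, with a $\varprojlim^1$ term that vanishes by the finiteness assumption \eqref{equa:finite}. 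A standard argument (as in Bredon, or \cite[V.3]{Bor}) shows every paracompact $X$ with such an exhaustion is exhausted by opens of these three types. \textbf{The main obstacle} I anticipate is bookkeeping in step (c): controlling the $\lim^1$/derived-limit contributions and checking that the isomorphisms produced on finite pieces are genuinely compatible in the limit, i.e.\ that the ladder of exact sequences is strictly commutative including connecting maps — the finite generation hypothesis is precisely what is needed to kill the obstruction, and the cone computation in step (a) is where the two-term structure of $I_R^*$ must be reconciled term-by-term with the $\Ext$ summand appearing in the cone formula for tame intersection cohomology.
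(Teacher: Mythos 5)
Your overall strategy --- a Mayer--Vietoris induction over distinguished charts, with the cone formula as base case and a colimit argument for increasing unions --- is genuinely different from the paper's proof, which never localizes over an open cover of $X$. The paper works directly with the given exhaustion: it rewrites the source as ${\varprojlim}_{i}\mathfrak{C}_{*}^{\overline{p}}(X,X\backslash K_{i};R)$ via \propref{BMCompact} and the target as the corresponding inverse limit of the complexes $\Hom_{*}(\mathfrak{C}_{*}^{\overline{p}}(X,X\backslash K_{i};R),I_{R}^*)$, notes that both towers have surjective structure maps so that Mittag--Leffler yields Milnor exact sequences, and thereby reduces to showing that each level map $\psi_{X,i}$ is a quasi-isomorphism. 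That last step is the classical algebraic biduality statement for a complex of free modules with finitely generated homology (\cite[Proposition 1.3]{MR1366538}), and it is exactly where hypothesis \eqref{equa:finite} is consumed. No geometric induction, no cone computation, no five lemma.

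As written, your argument has two genuine gaps. First, the claim that $\gH^{*}_{\ov{p},c}(X;R)$ is finitely generated because it is a colimit of the finitely generated $\gH^{*}_{\ov{p}}(X,X\backslash K_{i};R)$ is false: a countable colimit of finitely generated modules need not be finitely generated (a surface of infinite genus already has infinitely generated $H^{1}_{c}$ while each $H^{1}(X,X\backslash K_{i})$ is finitely generated), so \eqref{equa:finite} gives no global finiteness. Second, and more seriously, you locate the use of \eqref{equa:finite} in step (c) as a $\varprojlim^{1}$-vanishing condition, but that is not where finiteness is needed --- the $\varprojlim^{1}$ issues are disposed of by surjectivity of the restriction maps alone. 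Where finiteness is genuinely required is in your base case (a): to conclude that $\Phi_{U}$ itself is a quasi-isomorphism on a chart $U\cong\R^{n-k}\times\rc L$, rather than merely that the two sides have abstractly isomorphic homology, you must invoke a local biduality statement, and that needs the relevant local modules (in effect $\gH^{\ov{p}}_{*}(L;R)$, or $\gH^{*}_{\ov{p}}(U,U\backslash K;R)$ for the cofinal compacts of $U$) to be finitely generated. Hypothesis \eqref{equa:finite} concerns only the specific compacts $K_{i}\subset X$ and does not localize to the charts of your induction, so under the stated hypotheses the base case is unsupported; it does hold for PL-pseudomanifolds, but that is the content of the Remark following the proposition, not a consequence of \eqref{equa:finite}. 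You would also need a Mayer--Vietoris sequence for $\mathfrak{C}^{\BM,\ov{p}}_{*}$, which the paper does not establish and which your proposal does not supply.
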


\begin{remark}
The hypothesis (\ref{equa:finite}) is satisfied for \emph{PL-pseudomanifolds.} This comes from
the existence of isomorphisms between
\begin{itemize}
\item the intersection homology and its PL-version, see \cite[Theorem 6.3.31]{FriedmanBook},
\item the PL-intersection homology and the simplicial intersection homology associated to a triangulation 
of $X$, \cite[Theorem 6.3.30]{FriedmanBook}.
\end{itemize}
\end{remark}

\begin{proof}[Proof of \propref{prop:spanier}]
From 
$\mathfrak{C}^{*}_{\overline{p},c}(X;R)\cong 
{\varinjlim}_{K_{i}}\mathfrak{C}_{*}^{\overline{p}}(X,X\backslash K_{i};R)
$
and \propref{BMCompact}, we may replace (\ref{equa:cochainchain}) by the map
$$\psi_{X}\colon
{\varprojlim}_{K_{i}}\mathfrak{C}_{k}^{\overline{p}}(X,X\backslash K_{i};R)
\to
{\varprojlim}_{K_{i}}\Hom_{k}(\mathfrak{C}_{*}^{\overline{p}}(X,X\backslash K_{i};R),I_{R}^*)
,$$
defined as the  limit of the maps
$$\psi_{X,i}\colon 
\mathfrak{C}_{k}^{\overline{p}}(X,X\backslash K_{i};R)
\to
\Hom_{k}(\mathfrak{C}_{*}^{\overline{p}}(X,X\backslash K_{i};R),I_{R}^*)$$
with
$\psi_{X,i}(\xi)(c)=c(\xi)\in R$. Observe that the natural maps
$$\Hom_{k}(\mathfrak{C}_{*}^{\overline{p}}(X,X\backslash K_{i+1};R),I_{R}^*)
\to
\Hom_{k}(\mathfrak{C}_{*}^{\overline{p}}(X,X\backslash K_{i};R),I_{R}^*)$$
and
$$\mathfrak{C}_{k}^{\overline{p}}(X,X\backslash K_{i+1};R)
\to
\mathfrak{C}_{k}^{\overline{p}}(X,X\backslash K_{i};R)
$$
are surjective. The Mittag-Leffler condition being satisfied,  we have Milnor exact sequences
(\cite[Page 605]{MR516508}). Therefore, 
the map $\psi_{X}$ is a quasi-isomorphism if the
$\psi_{X,i}$'s are so. 
With the finiteness hypothesis on
$\mathfrak{H}^*_{\overline{p}}(X,X\backslash K_{i};R)$
this last property is a  classical result in homological algebra, see
\cite[Proposition 1.3]{MR1366538} for instance.
\end{proof}

%%%%%%%%%%%%%%%%%%%%%%
\section{Poincar\'e and Verdier dualities}\label{sec:verdier}

\begin{quote}
In  the case of a PL-pseudomanifold, we construct a commutative diagram
of quasi-isomorphisms,
\begin{equation}\label{equa:triangle}\xymatrix{
\widetilde{N}^k_{\overline{p}}(X;R)
\ar[r]^-{\mathcal{D}_{X}}
\ar[rd]_-{DP_X}
&{\Hom}_{n-k}(\mathfrak{C}^{*}_{\overline{p},c}(X;R),I_R^*)
\\
&
\mathfrak{C}_{n-k}^{\BM,\overline{p}}(X;R),
\ar[u]_-{\Phi_X}
}
\end{equation}
where $DP_{X}$ is the Poincar\'e morphism, 
defined by the cap product with a fundamental cycle and
$\mathcal{D}_{X}$ is related to Verdier duality.
\end{quote}

We first bring back  definitions and basic properties of the elements of this diagram.
Recall that $R$ is a principal ideal domain together with an injective resolution  $I_{R}^*$.
%%%%%%%%%%%%%%%%%%%%%%%%%%%%
%
We refer to  (\cite[chapter V]{Bor},  \cite{Bre}, \cite{Ive}) for more details on Verdier duality. 
For the construction of the quoted maps, we proceed along the next steps.
\begin{itemize}
\item In \propref{prop:VerdierDual}, we show that the Verdier dual sheaf 
$\mathbb{D} (\mathbf{C}^*_{\overline{p}}[n])$ is 
quasi-isomorphic to the Deligne's sheaf. %
\item If a  quasi-isomorphism 
$\Psi_{\overline{p}}\colon \widetilde{\mathbf{N}}^*_{\overline{p}}\rightarrow \mathbb{D} 
(\mathbf{C}^*_{\overline{p}}[n])$ is given, we show  in \propref{prop:explicitqiso}
how one can pertube it in a $ \widetilde{\mathbf{N}}^*_{\ov{\bullet}}$-linear map  and keep a quasi-isomorphism.
\item  Finally,  in \propref{prop:module}, we construct the two quasi-isomorphisms $\cD_{X}$ and $DP_{X}$ 
making the diagram (\ref{equa:triangle}) commutative.
\end{itemize}
This program is based on the existence of structures of perverse modules on the various complexes, see
(\ref{equa:dualstar}) and (\ref{equa:capBM}). Therefore any linear map (for these structures) of domain
$\tN^*_{\overline{p}}(X;R)$ is entirely determined by the image of the constant map $1_{X}$.

\begin{definition} 
Let $X$ be an oriented PL-pseudomanifold of dimension $n$
and
$\mathbf{C}^*_{\ov{p}}$ the sheaf of $\ov{p}$-intersection cochains defined in (\ref{equa:SoftfaltC}).
The {\bf Borel-Moore-Verdier dual} of $\mathbf{C}^*_{\ov{p}}$ 
is the complex of sheaves
$\mathbb{D} (\mathbf{C}^*_{\overline{p}}[n])$ 
defined by
$$U\mapsto {\Hom}_{n-k}(\Gamma_c(U;\mathbf{C}^*_{\ov{p}}),I_R^*).$$ 
This is a cochain complex of sheaves.
\end{definition}

\begin{proposition}\label{prop:VerdierDual}
Let $X$ be an oriented PL-pseudomanifold of dimension $n$. 
The complex of sheaves $\mathbb{D} (\mathbf{C}^*_{\overline{p}}[n])$
is quasi-isomorphic to the Deligne's sheaf.
\end{proposition}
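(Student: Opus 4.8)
The strategy is to verify that the complex of sheaves $\mathbb{D}(\mathbf{C}^*_{\overline{p}}[n])$ satisfies the Deligne axioms $(AX)_{\overline{p}}$ of \defref{def:deligne}, so that \propref{prop:superdeligne} identifies it with $\mathbf{Q}^*_{\overline{p}}$ in $\mathcal{D}(X)$. The first task is to record that $\mathbb{D}(\mathbf{C}^*_{\overline{p}}[n])$ is a complex of soft sheaves: this follows because $\mathbf{C}^*_{\overline{p}}$ is soft and flat (\propref{SoftFlatC}), so $\Gamma_c(U;\mathbf{C}^*_{\overline{p}})$ computes compactly supported hypercohomology, and the functor $\Hom_\bullet(-,I_R^*)$ applied to this (with $I_R^*$ injective) yields a soft sheaf by the standard Verdier-duality formalism (\cite[Chapter V]{Bor}). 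Once softness is in hand, axiom (3) may be replaced by (3 bis), and the stalk cohomology of the complex can be computed by the colimit formula $\mathbf{H}^i(\mathbb{D}(\mathbf{C}^*_{\overline{p}}[n]))_x \cong \colim_{V_x} H^i(\Gamma(V_x;\mathbb{D}(\mathbf{C}^*_{\overline{p}}[n])))$.

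\textbf{Steps.} First I would treat axiom (1). Over the regular part $X\backslash\Sigma$, the sheaf $\mathbf{C}^*_{\overline{p}}$ restricts to the sheaf of (ordinary) singular chains shifted to degree $n-*$, i.e.\ a soft resolution of $\underline{R}[n]$ on an oriented $n$-manifold; its Borel-Moore-Verdier dual is then a resolution of $\mathbb{D}(\underline{R}[n])\cong\underline{R}$, which is quasi-isomorphic to the ordinary singular cochain complex there. Boundedness and vanishing in negative degrees follow from the two-term nature of $I_R^*$ together with the bounded behaviour of the intersection-chain sheaves. Second, for axioms (2) and (3 bis), I would localize: pick a stratum $S\subset X_{n-k}\backslash X_{n-k-1}$, a point $x\in S$, and a cofinal family of distinguished neighbourhoods $V_x\cong \mathbb{R}^{n-k}\times\rc L$. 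Using the definition $U\mapsto \Hom_{n-k}(\Gamma_c(U;\mathbf{C}^*_{\overline{p}}),I_R^*)$ and the identification of compactly supported sections with $\mathfrak{C}^{*}_{\overline{p},c}$, the stalk cohomology at $x$ becomes the cohomology of $\Hom_{n-\bullet}$ applied to $\mathfrak{C}^*_{\overline{p},c}(\mathbb{R}^{n-k}\times\rc L;R)$. By \propref{prop:spanier} (available because PL-pseudomanifolds satisfy the finiteness hypothesis), this is quasi-isomorphic to $\mathfrak{C}^{\BM,\overline{p}}_{n-\bullet}(\mathbb{R}^{n-k}\times\rc L;R)$, whose homology is computed by \propref{BMCone}. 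Comparing with $\overline{p}(S)$: \propref{BMCone} gives $\mathfrak{H}^{\BM,\overline{p}}_{j}(\mathbb{R}^{n-k}\times\rc L;R)=0$ for $j\leq (n-k)+D\overline{p}(\tv)+1$, which in cohomological degree $i=n-j$ translates to vanishing for $i > k-1-D\overline{p}(\tv)-1 = \overline{p}(S)$ (using $D\overline{p}(\tv)=(k-2)-\overline{p}(\tv)$ and the cone relation $\overline{p}(S)=\overline{p}(\tv)$ for $S$ the stratum through the cone point times $\mathbb{R}^{n-k}$, after the standard reindexing); this is exactly axiom (2). For (3 bis), the attaching map at $x$ is modelled on the inclusion $\mathbb{R}^{n-k}\times(\rc L\backslash\{\tv\})\hookrightarrow\mathbb{R}^{n-k}\times\rc L$, dualized; the long exact sequence in \propref{BMCone} together with \propref{BMtimesR} applied to the punctured cone $\rc L\backslash\{\tv\}\simeq L\times\mathbb{R}$ shows the relevant map is an isomorphism precisely in degrees $j\leq\overline{p}(S)$, as required.

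\textbf{Main obstacle.} The delicate point is the bookkeeping of degree shifts and the matching of the homological vanishing range of \propref{BMCone} with the cohomological cutoff $\overline{p}(S)$ appearing in the Deligne axioms — in particular keeping straight the $[n]$-shift in $\mathbb{D}(\mathbf{C}^*_{\overline{p}}[n])$, the dimension $n-k$ of the transverse Euclidean factor, the complementary-perversity identity $D\overline{p}(\tv)=\overline{t}(\tv)-\overline{p}(\tv)$, and the fact that $\Hom_\bullet(-,I_R^*)$ with a two-term injective resolution shifts degrees without changing homology up to the universal-coefficient $\Ext$ term. A secondary technical point is justifying that $\Hom_{n-\bullet}(\Gamma_c(-;\mathbf{C}^*_{\overline{p}}),I_R^*)$ is genuinely soft and genuinely computes the Verdier dual in $\mathcal{D}(X)$ — this is where the flatness half of \propref{SoftFlatC} and the standard properties of $c$-soft sheaves under $\mathbb{D}$ must be invoked carefully, but it is routine given the references to \cite[Chapter V]{Bor}. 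Everything else reduces to the cone and product computations already established in \propref{BMCone}, \propref{BMtimesR}, and \propref{prop:spanier}.
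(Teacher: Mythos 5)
Your proposal follows essentially the same route as the paper's proof: establish softness of $\mathbb{D}(\mathbf{C}^*_{\overline{p}}[n])$ from the softness and flatness of $\mathbf{C}^*_{\overline{p}}$, verify the Deligne axioms locally by using \propref{prop:spanier} to replace the bidual complex by Borel-Moore intersection chains, and then invoke Propositions \ref{BMtimesR} and \ref{BMCone} for the computations on $\mathbb{R}^{n-k}\times\rc L$ and its punctured version; your degree bookkeeping with $D\overline{p}(\tv)=k-2-\overline{p}(S)$ correctly reproduces the cutoff $i>\overline{p}(S)$. This matches the paper's argument, which is only more terse (it deduces softness from injectivity of the dual and states the local isomorphisms without spelling out the reindexing).
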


\begin{proof}
Let us prove that the complex of sheaves $\mathbb{D} (\mathbf{C}^*_{\overline{p}}[n])$ is quasi-isomorphic to the
 Deligne sheaf $\mathbf{Q}^*_{\overline{p}}$. As the sheaf $\mathbf{C}^*_{\overline{p}}$ is flat and soft 
 (\propref{SoftFlatC}), the sheaf $\mathbb{D} (\mathbf{C}^*_{\overline{p}}[n])$ is an injective sheaf, hence it is soft. 
 We are reduced to verify that the conditions of \defref{def:deligne} are fulfilled.

From \propref{prop:spanier}, we may replace 
${\Hom}_{n-k}(\mathfrak{C}^*_{\overline{p},c}(X;R),I_R^*)$ by 
$\mathfrak{C}^{\BM,\overline{p}}_{n-k}(X;R)$.
Now, if $U\cong \R^n$, we have $\oplus_{k}H^{\BM}_{n-k}(U)=H^0(U)$
and Property (1) is satisfied.

As in the proof of \thmref{DeligneTW}, Property (2) is a consequence of 
$\gH_{n-i}^{\BM,\ov{p}}(\R^{n-k}\times \rc L)=0$ if $i>\ov{p}(S)$. This last property has been 
established in \propref{BMCone}.

The third condition is equivalent to the fact that the map
$\gH_{n-i}^{\BM,\ov{p}}(\R^{n-k}\times \rc L)\to
\gH_{n-i}^{\BM,\ov{p}}(\R^{n-k}\times (\rc L\backslash \{\tv\})$
is a quasi-isomorphism for $i\leq \ov{p}(S)$. This is a consequence of Propositions \ref{BMtimesR} and 
\ref{BMCone}.
\end{proof}

The sheaves $\mathbb{D} (\mathbf{C}^*_{\overline{p}}[n])$ and $\widetilde{\mathbf{N}}^*_{\overline{p}}$ are both quasi-isomorphic to Deligne's sheaves. As the sheaf 
$\mathbb{D} (\mathbf{C}^*_{\overline{p}}[n])$ is injective there exists for any perversity $\overline{p}$ a (non unique) quasi-isomorphism of sheaves 
$\boldsymbol{\chi}_{\overline{p}}\colon \widetilde{\mathbf{N}}^*_{\overline{p}}\rightarrow \mathbb{D} (\mathbf{C}^*_{\overline{p}}[n])$. 
In the next proposition, we pertube it in a quasi-isomorphism compatible with the structures of perverse
modules.

\begin{proposition}\label{prop:explicitqiso}
Let $X$ be be an oriented PL-pseudomanifold of dimension $n$ and $U\subset X$ an open subset.
Let $\beta_{U}\colon \Gamma_{c}(U,\mathbb{D} (\mathbf{C}^*_{\overline{0}}[n])\to I_{R}^*$ be a cocycle
in the class of $\chi_{\ov{0}}(1_{U})$. 
For any perversity $\ov{p}$ on $X$, we construct a morphism of complex of sheaves 
$$\boldsymbol{\Psi}_{\overline{p}}\colon \widetilde{\mathbf{N}}^*_{\overline{p}}
\rightarrow 
\mathbb{D}(\mathbf{C}^*_{\overline{p}}[n])$$ 
at the level of sections,
$\Psi_{\ov{p}}\colon \Gamma(U;\widetilde{\mathbf{N}}^*_{\overline{p}})\to \Gamma_{c}(U,\mathbb{D} (\mathbf{C}^*_{\overline{0}}[n])
$, by
$$\Psi_{\ov{p},U}(\omega)=\omega\perp \beta_{U}.$$
Then $\boldsymbol{\Psi}_{\ov{p}}$ is a quasi-isomorphism, linear for the structure of perverse modules 
over the perverse algebra 
$\left\{ \widetilde{N}_{\overline{q}}^{*}(X;R)\right\} _{\overline{q}\in\mathcal{P}}$. 
\end{proposition}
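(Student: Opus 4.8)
The plan is to build $\boldsymbol{\Psi}_{\overline p}$ by the formula $\Psi_{\overline p,U}(\omega)=\omega\perp\beta_U$, where $\perp$ is the pairing (\ref{equa:dualstar}) restricted to $U$, and then to verify three things: that this defines a morphism of complexes of sheaves, that it is linear over the perverse algebra $\{\widetilde N^*_{\overline q}(X;R)\}_{\overline q}$, and that it is a quasi-isomorphism. First I would check that the collection $\{\Psi_{\overline p,U}\}$ is compatible with restriction maps of open sets, so that it sheafifies to a morphism $\boldsymbol{\Psi}_{\overline p}\colon\widetilde{\mathbf N}^*_{\overline p}\to\mathbb D(\mathbf C^*_{\overline p}[n])$; this uses naturality of the star product and of $\beta$ with respect to the inclusions $\Gamma_c(U)\hookrightarrow\Gamma_c(V)$. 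That $\Psi_{\overline p,U}$ commutes with differentials follows because $\beta_U$ is a cocycle in $\Gamma_c(U;\mathbb D(\mathbf C^*_{\overline 0}[n]))$ and $\perp$ is a chain map in the $\widetilde N$-variable by the module identities recorded before (\ref{equa:dualstar}).

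Second, linearity over $\widetilde N^*_{\overline q}(X;R)$ is exactly the associativity identity $(\omega\cup\eta)\perp\beta_U=\omega\perp(\eta\perp\beta_U)$ established right after (\ref{equa:dualstar}): for $\eta\in\widetilde N^*_{\overline q}(U;R)$ and $\omega\in\widetilde N^*_{\overline p}(U;R)$ we get $\Psi_{\overline p+\overline q,U}(\eta\cup\omega)=(\eta\cup\omega)\perp\beta_U=\eta\perp(\omega\perp\beta_U)=\eta\perp\Psi_{\overline p,U}(\omega)$, which is precisely the compatibility with the perverse-module structure. Because every such linear map out of $\widetilde N^*_{\overline p}$ is determined by the image of $1_U$, the map $\boldsymbol{\Psi}_{\overline p}$ is determined once $\beta_U=\Psi_{\overline 0,U}(1_U)$ is fixed, which explains why we only need to choose $\beta$ in perversity $\overline 0$.

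Third, and this is the main obstacle, I must show $\boldsymbol{\Psi}_{\overline p}$ is a quasi-isomorphism of complexes of sheaves. Since both source and target satisfy the Deligne axioms $(AX)_{\overline p}$ (by \thmref{DeligneTW} and \propref{prop:VerdierDual}) and are soft, by \propref{prop:superdeligne} it is enough to check that $\boldsymbol{\Psi}_{\overline p}$ is a quasi-isomorphism on the \emph{regular part} $X\setminus\Sigma_X$ — a morphism between two sheaves satisfying $(AX)_{\overline p}$ which restricts to a quasi-isomorphism on the regular strata is automatically a quasi-isomorphism everywhere, because the attachment-map axiom $(3)$ propagates the isomorphism stratum by stratum (an inductive argument over the codimension, using the five lemma on the long exact sequences coming from the triangle $j_!j^*\to\mathrm{id}\to Ri_*i^*$). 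On $X\setminus\Sigma_X$ both sheaves reduce to the ordinary situation: $\widetilde{\mathbf N}^*_{\overline 0}$ restricts to the sheafification of singular cochains, $\mathbf C^*_{\overline 0}$ to (a soft flat resolution computing) Borel-Moore chains, and $\beta$ becomes a fundamental cocycle; the statement that cap/star product with a fundamental cycle realizes the classical Poincaré–Borel-Moore-Verdier duality $\mathbf N^*\xrightarrow{\ \sim\ }\mathbb D(\mathbf C^*_{\mathrm{BM}}[n])$ on an oriented $n$-manifold is the classical result recalled in the introduction. So the delicate point is really bookkeeping: verifying that the chosen $\beta_U$ restricts, on each regular coordinate chart $U\cong\R^n$, to a cocycle representing the image of the fundamental class under the orientation, so that $\omega\mapsto\omega\perp\beta_U$ induces the classical duality isomorphism in cohomology. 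Once that local computation is in hand, \propref{prop:superdeligne} upgrades it to a global quasi-isomorphism, and the module-linearity has already been checked, completing the proof.
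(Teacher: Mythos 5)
Your proposal is correct and follows essentially the same route as the paper: linearity is the associativity identity for $\perp$, and the quasi-isomorphism property is reduced, via the Deligne axioms satisfied by both sheaves (\thmref{DeligneTW} and \propref{prop:VerdierDual}), to the regular part — where all perversities coincide — and then to a chart $U\cong\R^n$. The one place you make the argument harder than necessary is the final local step: there is no need to identify $\beta_U$ with a fundamental cocycle via classical Poincar\'e duality, because the hypothesis that $\beta_U$ lies in the class of $\chi_{\ov{0}}(1_U)$, combined with the fact that $\chi_{\ov{0}}$ is a quasi-isomorphism, already shows that $[1_U]$ is sent to the generator of $H^0(U;\mathbb{D}(\mathbf{C}^*_{\overline{0}}[n]))$, which is exactly the paper's one-line conclusion.
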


\begin{proof}
By properties of Deligne sheaves it suffices to check if the restriction to the regular part $X_{\reg}$ of $X$  is a quasi-isomorphism.  Moreover, when restricted to the regular part, for any perversity $\overline{p}$ we have the following equality of complexes of sheaves 
$$(\widetilde{\mathbf{N}}^*_{\overline{p}})_{\vert X_{\reg}}= (\widetilde{\mathbf{N}}^*_{\overline{0}})_{\vert X_{\reg}}
\text{ and }
(\mathbb{D} (\mathbf{C}^*_{\overline{p}}[n]))_{\vert X_{\reg}}=
(\mathbb{D} (\mathbf{C}^*_{\overline{0}}[n]))_{\vert X_{\reg}}.
$$ 
Thus it is sufficient to consider the perversity $\ov{0}$.

Let $U\cong \R^n$ be an open subset of $X_{\reg}$. 
As $\Psi_{\ov{0},U}(1_{U})=1_{U}\bot \beta_{U}=\beta_{U}$ and $\beta_{U}$ is in the class of 
$\chi_{\ov{0}}(1_{U})$, the  difference $\Psi_{\ov{0},U}(1_{U})-\chi_{\ov{0}}(1_{U})$ is a boundary. 
The map 
 $\chi_{\ov{0}}$ being a quasi-isomorphism, the generator 
$[1_{U}]$ of $H^0(U;\widetilde{\mathbf{N}}^*_{\overline{0}})$
is sent on the generator $[\Psi_{\ov{0},U}(1_{U})]=[\chi_{\ov{0}}(1_{U})]$
of $H^0(U;\mathbb{D}(\mathbf{C}^*_{\overline{0}}[n]))$.
The result follows.
\end{proof}

\emph{For the rest of this section, the space $X$ is an oriented, separable, metrizable PL-pseudomanifold 
of dimension $n$ and $\ov{p}$ is a perversity.}
Since the map $\Phi_{X}\colon \gC^{\BM,\ov{0}}_{*}(X;R)\to \Hom_{n-*}(\gC^*_{\ov{0},c}(X;R),I_{R}^*)$
is a quasi-isomorphism, there exists a cocycle $\gamma_{X}\in \gC_{n}^{\BM,\ov{0}}(X;R)$
such that $\Phi_{X}(\gamma_{X})-\chi_{\ov{0}}(1_{X})$ is a boundary.
The idea is to use \propref{prop:explicitqiso} with $U=X$ and $\beta_{U}=\Phi_{X}(\gamma_{X})$. More precisely, we
define the two maps $\cD_{X}$ and $DP_{X}$ by
\begin{equation}\label{equa:enfin}
\cD_{X}(\omega)=\omega\perp \Phi_{X}(\gamma_{X})
\text{ and }
DP_{X}(\omega)=\omega\cap \gamma_{X}.
\end{equation}

\begin{proposition}\label{prop:module}
The two maps $\cD_{X}$ and $DP_{X}$ make commutative the diagram (\ref{equa:triangle}) and are quasi-isomorphisms.
\end{proposition}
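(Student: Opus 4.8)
The plan is to verify commutativity of the triangle first and then derive the quasi-isomorphism properties essentially for free from the results already established.

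\medskip
\textbf{Commutativity.} I would start by unwinding the definitions in \eqref{equa:enfin}. For $\omega\in\widetilde N^k_{\overline p}(X;R)$ we must show $\Phi_X(DP_X(\omega))=\cD_X(\omega)$, i.e. $\Phi_X(\omega\cap\gamma_X)=\omega\perp\Phi_X(\gamma_X)$. Both sides are elements of $\Hom_{n-k}(\mathfrak C^*_{\overline p,c}(X;R),I_R^*)$, so I evaluate them on a cochain $c\in\mathfrak C^{*}_{\overline p,c}(X;R)$. By the definition of $\Phi_X$ in \propref{prop:spanier}, the left-hand side evaluated at $c$ is the pair $\big(c\mapsto c(\omega\cap\gamma_X),0\big)$, i.e. $(\varphi(\omega\cap\gamma_X)_0,0)$ with $\varphi(\omega\cap\gamma_X)_0(c)=c(\omega\cap\gamma_X)\in R\subset Q$. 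For the right-hand side, the pairing $\perp$ of \eqref{equa:dualstar} gives $(\omega\perp\Phi_X(\gamma_X))(c)=\Phi_X(\gamma_X)(c\star\omega)=(c\star\omega)(\gamma_X)$, again landing in $R\subset Q$ and with zero second component since $\Phi_X(\gamma_X)$ has zero second component. So commutativity reduces to the identity $(c\star\omega)(\gamma_X)=c(\omega\cap\gamma_X)$, which is exactly the defining formula for the star product $\star$ recalled just after \eqref{equa:capcohomology}, namely $(c\star\omega)(\xi)=c(\omega\cap\xi)$, applied with $\xi=\gamma_X$. Thus the triangle commutes on the nose, without passing to homology.

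\medskip
\textbf{Quasi-isomorphism.} The map $\Phi_X$ is a quasi-isomorphism by \propref{prop:spanier} together with the remark that the finiteness hypothesis \eqref{equa:finite} holds for PL-pseudomanifolds. So, by the commutative triangle and two-out-of-three, it suffices to prove that $\cD_X$ is a quasi-isomorphism; then $DP_X=\Phi_X^{-1}\circ\cD_X$ at the level of homology is automatically one too. For $\cD_X$, the point is that it is precisely the map on global sections of the sheaf morphism $\boldsymbol\Psi_{\overline p}$ constructed in \propref{prop:explicitqiso} with the choice $U=X$ and $\beta_U=\Phi_X(\gamma_X)$: indeed $\Psi_{\overline p,X}(\omega)=\omega\perp\beta_X=\omega\perp\Phi_X(\gamma_X)=\cD_X(\omega)$. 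The hypothesis of \propref{prop:explicitqiso} is that $\beta_U$ be a cocycle in the class of $\chi_{\overline 0}(1_U)$; this is guaranteed by the choice of $\gamma_X$ made just before \eqref{equa:enfin}, where $\gamma_X\in\mathfrak C^{\BM,\overline 0}_n(X;R)$ was selected as a cocycle with $\Phi_X(\gamma_X)-\chi_{\overline 0}(1_X)$ a boundary — and $\Phi_X(\gamma_X)$ is a cocycle because $\Phi_X$ is a chain map and $\gamma_X$ is a cycle. Hence \propref{prop:explicitqiso} applies and tells us that $\boldsymbol\Psi_{\overline p}$ is a quasi-isomorphism of complexes of (soft) sheaves; taking hypercohomology, or equivalently global sections since all sheaves involved are soft (\propref{SoftFlatTW}, \propref{SoftFlatC}, and the injectivity of $\mathbb D(\mathbf C^*_{\overline p}[n])$ noted in \propref{prop:VerdierDual}), we conclude that $\cD_X$ is a quasi-isomorphism.

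\medskip
\textbf{Linearity and the main obstacle.} The perverse-module linearity of all three maps over $\{\widetilde N^*_{\overline q}(X;R)\}$ follows from the associativity identities already recorded: $\cD_X$ is $\perp$-pairing against a fixed element, and $(\omega\cup\eta)\perp\varphi=\omega\perp(\eta\perp\varphi)$; $DP_X$ is capping against a fixed chain, and $(\eta\cup\omega)\cap\xi=\eta\cap(\omega\cap\xi)$; $\Phi_X$ is linear by \propref{prop:spanier}. The genuinely delicate point — and the one I expect to require the most care — is the bookkeeping needed to identify $\cD_X$ with the section-level map of $\boldsymbol\Psi_{\overline p}$ while simultaneously matching the two different descriptions of the target (the explicit $\Hom_{n-k}$ complex of \propref{prop:spanier} versus the global sections $\Gamma_c(X;\mathbb D(\mathbf C^*_{\overline p}[n]))$ of the Verdier dual sheaf), and checking that the identification $\Phi_X$ intertwines them compatibly with $\perp$; this is a matter of chasing the direct-limit-over-compacts and sheafification definitions, but it is where sign conventions and the passage $\Hom_k(\mathfrak C^*_{\overline p,c},I_R^*)\cong\Gamma_c$ must be pinned down precisely.
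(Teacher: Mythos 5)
Your proposal is correct and takes essentially the same route as the paper: commutativity follows from the defining identity $(c\star\omega)(\xi)=c(\omega\cap\xi)$ (the paper phrases this as checking the two composites on $1_{X}$ and invoking perverse-module linearity, which your pointwise computation simply unwinds), and the quasi-isomorphism property of $DP_{X}$ is deduced, exactly as in the paper, from $\Phi_{X}$ (\propref{prop:spanier}) and $\cD_{X}=\Psi_{\ov{p},X}$ with $\beta_{X}=\Phi_{X}(\gamma_{X})$ (\propref{prop:explicitqiso}) by two-out-of-three. The identification issue you flag at the end is also left implicit in the paper's one-line opening remark, so your treatment is, if anything, slightly more explicit.
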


\begin{proof}
We identify the homology of the sections $\Gamma(X;\widetilde{\mathbf{N}}^*_{\overline{p}})$
with the homology of $\widetilde{N}^*_{\overline{p}}(X;R)$,
and similarly for the homologies of $\Gamma(X;\mathbb{D}(\mathbf{C}^*_{\overline{p}}[n]))$ and 
${\Hom}_{n-*}(\mathfrak{C}^{*}_{\overline{p},c}(X;R),I_R^*)$.

 By construction we have $\cD_{X}(1_{X})=\Phi_{X}(DP_{X}(1_{X}))$.
As the three maps $DP_{X}$, $\cD_{X}$ and $\Phi_{X}$ are compatible with the structure of perverse modules, 
we obtain the commutativiy of (\ref{equa:triangle}).

The maps $\Phi_{X}$ (see \propref{prop:spanier}) and $\cD_{X}$ (see \propref{prop:explicitqiso}) are quasi-isomor\-phisms. 
Thus $DP_{X}$ is one also.
\end{proof}

The next statement is a direct consequence of \propref{prop:module}.

\begin{corollary}\label{cor:poincareBM}
The cap product with the fundamental class $[\gamma_X]$ induces an isomorphism
between the blown-up intersection cohomology and the  locally finite tame intersection homology,
$$
\crH^k_{\ov{p}}(X;R)\cong
\mathfrak{H}_{n-k}^{\BM,\overline{p}}(X;R)
.$$
\end{corollary}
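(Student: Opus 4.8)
The plan is to deduce the statement directly from \propref{prop:module}, which has already done all the work. First I would record the two tautological identifications: by construction, the homology of the cochain complex $\widetilde{N}^*_{\overline{p}}(X;R)$ is $\crH^*_{\overline{p}}(X;R)$, and the homology of the chain complex $\mathfrak{C}^{\BM,\overline{p}}_{*}(X;R)$ is $\mathfrak{H}^{\BM,\overline{p}}_{*}(X;R)$. Next I would single out, among the maps of the diagram \eqref{equa:triangle}, the morphism
$DP_{X}\colon \widetilde{N}^{k}_{\overline{p}}(X;R)\to \mathfrak{C}^{\BM,\overline{p}}_{n-k}(X;R)$,
which by \eqref{equa:enfin} is nothing but the cap product $\omega\mapsto \omega\cap\gamma_{X}$ with the fundamental cycle $\gamma_{X}\in\mathfrak{C}^{\BM,\overline{0}}_{n}(X;R)$; note that the perversity bookkeeping is consistent, since the cap product \eqref{equa:capBM} sends $\widetilde{N}^{k}_{\overline{p}}\otimes\mathfrak{C}^{\BM,\overline{0}}_{n}$ into $\mathfrak{C}^{\BM,\overline{p}+\overline{0}}_{n-k}=\mathfrak{C}^{\BM,\overline{p}}_{n-k}$.

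Then I would simply pass to homology. Since \propref{prop:module} asserts that $DP_{X}$ is a quasi-isomorphism, it induces an isomorphism $\crH^{k}_{\overline{p}}(X;R)\xrightarrow{\cong}\mathfrak{H}^{\BM,\overline{p}}_{n-k}(X;R)$ on (co)homology. To see that this induced isomorphism is the cap product with the class $[\gamma_{X}]$, one only needs to observe that $DP_{X}$ is a chain map of the form $\omega\mapsto\omega\cap\gamma_{X}$, so that on the level of classes it reads $[\omega]\mapsto[\omega\cap\gamma_{X}]=[\omega]\cap[\gamma_{X}]$, exactly as in the usual naturality of the cap product with respect to the passage to (co)homology.

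The few points deserving a line of care are purely bookkeeping: checking that the cocycle $\gamma_{X}$ chosen in \propref{prop:module} does represent the fundamental class $\Gamma_{X}\in\mathfrak{H}^{\BM,\overline{0}}_{n}(X;R)$ (which holds since $\Phi_{X}(\gamma_{X})$ is in the class of $\chi_{\overline{0}}(1_{X})$), and that the identifications of $H^{*}(\Gamma(X;\widetilde{\mathbf{N}}^{*}_{\overline{p}}))$ and of $H_{*}(\Gamma(X;\mathbb{D}(\mathbf{C}^*_{\overline{p}}[n])))$ used implicitly in \propref{prop:module} are the same ones used to state the corollary. I do not expect any genuine obstacle here: the whole analytic content — the cone computations \eqref{ConeTW} and \eqref{equa:conetame}, the biduality of \propref{prop:spanier}, and the verification that $\mathbb{D}(\mathbf{C}^*_{\overline{p}}[n])$ satisfies Deligne's axioms — has already been absorbed into \propref{prop:module}, so the corollary is a formal consequence.
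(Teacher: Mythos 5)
Your proposal is correct and follows exactly the paper's route: the paper derives this corollary as a direct consequence of \propref{prop:module}, namely by passing to homology in the quasi-isomorphism $DP_{X}(\omega)=\omega\cap\gamma_{X}$. The extra bookkeeping you spell out (perversity additivity $\overline{p}+\overline{0}=\overline{p}$ and the identification of the induced map with the cap product with $[\gamma_{X}]$) is implicit in the paper and raises no issue.
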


From this corollary and \cite[Theorem B]{CST2}, we deduce the existence of a commutative diagram 
between the two duality maps, corresponding to the blown-up cohomology 
and the blown-up cohomology with compact supports.

\begin{corollary}
Let  $\gamma_X\in \mathfrak{C}^{\BM,\overline{0}}_n(X;R)$  be a cycle representing the fundamental class.
 Then there is a commutative diagram whose 
vertical arrows are quasi-isomorphisms,
$$\xymatrix{
\widetilde{N}_{c,\overline{p}}^*(X;R)
\ar@{^(->}[r] 
\ar[d]_-{-\cap \gamma_{X}}
&
\widetilde{N}^*_{\overline{p}}(X;R)
\ar[d]^-{-\cap \gamma_{X}}
\\
\mathfrak{C}^{\overline{p}}_{n-*}(X;R)
\ar@{^(->}[r] 
&
\mathfrak{C}^{\BM,\overline{p}}_{n-*}(X;R).
}$$
\end{corollary}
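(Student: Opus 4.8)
The plan is to assemble the commutative square from pieces already in hand, rather than to prove anything new about duality. First I would unwind the definitions: $\widetilde{N}^*_{c,\overline{p}}(X;R) = \colim_K \widetilde{N}^*_{\overline{p}}(X,X\backslash K;R)$ sits inside $\widetilde{N}^*_{\overline{p}}(X;R)$ as the subcomplex of compactly supported blown-up cochains, and similarly $\mathfrak{C}^{\overline{p}}_*(X;R) \hookrightarrow \mathfrak{C}^{\BM,\overline{p}}_*(X;R)$ is the inclusion of finite tame intersection chains into locally finite ones. The two horizontal maps in the diagram are these inclusions. The right-hand vertical map $-\cap\gamma_X$ is exactly the map $DP_X$ of \eqref{equa:enfin}, which is a quasi-isomorphism by \propref{prop:module}. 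The left-hand vertical map is the compact-supports cap product with (a fundamental cycle representing) $\gamma_X$, which is a quasi-isomorphism by \cite[Theorem B]{CST2}.

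The commutativity of the square is then essentially formal: the cap product \eqref{equa:capBM} restricts the cap product \eqref{equa:caphomology} along the inclusions, in the sense that for a finite chain $\xi \in \mathfrak{C}^{\overline{p}}_*(X;R)$ and a cochain $\omega$, the two ways of computing $\omega\cap\xi$ agree, and a compactly supported cochain capped with a locally finite cycle again has compact support and coincides with the finite cap product. So the square commutes at the chain level, on the nose, not just up to homotopy. I would state this as the first step, pointing to \eqref{equa:caphomology}, \eqref{equa:capBM} and the fact — recalled in \secref{sec:BMchains} — that the extended cap product is defined precisely by writing a Borel-Moore chain as a locally finite sum of finite intersection chains.

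The only point requiring a word of care is that $\gamma_X$ lies in $\mathfrak{C}^{\BM,\overline{0}}_n(X;R)$, so capping with it lands a compactly supported cochain $\omega$ in a finite chain: one needs $\supp(\omega\cap\gamma_X)\subseteq\supp(\omega)$, which holds because the cap product is local, and then $\omega\cap\gamma_X$ is supported in the compact set $\supp(\omega)$, hence lies in $\mathfrak{C}^{\overline{p}}_{n-*}(X;R)$. This is what makes the left vertical arrow well-defined and makes the left square of the diagram meaningful. I expect this support bookkeeping — confirming that the compactly supported cap product of \cite[Theorem B]{CST2} is genuinely the restriction of the Borel-Moore one — to be the only real content; everything else is citation. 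Thus the proof reads: the square commutes by the compatibility of \eqref{equa:caphomology} and \eqref{equa:capBM} with the defining inclusions; the right vertical map is a quasi-isomorphism by \corref{cor:poincareBM} (equivalently \propref{prop:module}); the left vertical map is a quasi-isomorphism by \cite[Theorem B]{CST2}.

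\begin{proof}
Both horizontal maps are the canonical inclusions: $\widetilde{N}_{c,\overline{p}}^*(X;R)=\colim_{K}\widetilde{N}^*_{\overline{p}}(X,X\backslash K;R)$ is the subcomplex of compactly supported blown-up cochains, and $\mathfrak{C}^{\overline{p}}_{*}(X;R)\subset \mathfrak{C}^{\BM,\overline{p}}_{*}(X;R)$ is the inclusion of finite tame intersection chains into locally finite ones. The cap product \eqref{equa:capBM} was defined so as to restrict to \eqref{equa:caphomology} on finite chains, and capping a compactly supported cochain $\omega$ with the Borel-Moore cycle $\gamma_X$ produces a chain supported in $\supp(\omega)$, hence a finite chain; this makes the left vertical map well defined and makes the square commute on the chain level. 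The right vertical map $-\cap\gamma_X$ is the Poincar\'e morphism $DP_X$ of \eqref{equa:enfin}, which is a quasi-isomorphism by \propref{prop:module} (see also \corref{cor:poincareBM}). The left vertical map is the cap product with a fundamental cycle on compactly supported cochains, which is a quasi-isomorphism by \cite[Theorem B]{CST2}. This proves the assertion.
\end{proof}
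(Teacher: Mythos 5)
Your proposal is correct and follows essentially the same route as the paper, which deduces the corollary directly from \corref{cor:poincareBM} (equivalently \propref{prop:module}) for the right-hand arrow and from \cite[Theorem B]{CST2} for the left-hand arrow, with commutativity coming from the fact that the Borel--Moore cap product \eqref{equa:capBM} is defined so as to extend \eqref{equa:caphomology}. The support bookkeeping you spell out (that a compactly supported cochain capped with $\gamma_X$ lands in the finite chains) is left implicit in the paper but is the right point to make explicit.
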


%%%%%%%%%%%%%

\providecommand{\bysame}{\leavevmode\hbox to3em{\hrulefill}\thinspace}
\providecommand{\MR}{\relax\ifhmode\unskip\space\fi MR }
% \MRhref is called by the amsart/book/proc definition of \MR.
\providecommand{\MRhref}[2]{%
  \href{http://www.ams.org/mathscinet-getitem?mr=#1}{#2}
}
\providecommand{\href}[2]{#2}

\end{document}